  \newtheorem{thm}{Theorem}[section]
  \newtheorem{lem}[thm]{Lemma}
  \newtheorem{prop}[thm]{Proposition}
  \newtheorem{cor}[thm]{Corollary}
  \theoremstyle{definition}
  \newtheorem{defn}[thm]{Definition}
  \newtheorem{exm}[thm]{Example}
  \newtheorem{rmk}[thm]{Remark}
 \newcommand\ra{\rightarrow}
 \newcommand\s{\subseteq}
\newcommand{\Max}{\mbox{\rm Max}}
\newcommand{\Spec}{\mbox{\rm Spec}}
\newcommand{\Ker}{\mbox{\rm Ker}}
\newcommand{\Rad}{\mbox{\rm Rad}}
\newcommand{\Id}{\mbox{\rm Id}}
\newcommand{\Con}{\mbox{\rm Con}}
\newcommand{\MaxS}{\mbox{\rm MaxS}}
 \numberwithin{equation}{section}
\begin{document}

\title{State BCK-algebras and State-Morphism BCK-algebras}
\author[R. A. Borzooei, A. Dvure\v{c}enskij, O.Zahiri]{R. A. Borzooei$^{1}$, A. Dvure\v{c}enskij$^{2,3}$, and O. Zahiri$^{1}$}
\date{}
\maketitle

\begin{center}  \footnote{Keywords: state-morphism operator, left state operator, right state operator, BCK-algebra, state BCK-algebra, state-morphism BCK-algebra, quasivariety, generator

AMS classification: 06D35, 03G12, 03G25, 03B50, 28E15

The paper has been supported by Slovak Research and Development Agency under the contract APVV-0178-11, the grant VEGA No. 2/0059/12 SAV and by
CZ.1.07/2.3.00/20.0051. }
Department of Mathematics, Shahid Beheshti University, G. C., Tehran, Iran\\
$^2$ Mathematical Institute,  Slovak Academy of Sciences,\\
\v Stef\'anikova 49, SK-814 73 Bratislava, Slovakia\\
$^3$ Depart. Algebra  Geom.,  Palack\'{y} Univer.\\
17. listopadu 12,
CZ-771 46 Olomouc, Czech Republic\\
E-mail: {\tt borzooei@sbu.ac.ir} \quad {\tt dvurecen@mat.savba.sk}\quad   {\tt om.zahiri@gmail.com}
\end{center}

\begin{center}
\small{\it Dedicated  to Prof. J\'an Jakub\'\i k on the occasion of his $90^{th}$ birthday}
\end{center}

\begin{abstract}
In the paper, we define the notion of a state BCK-algebra and a state-morphism BCK-algebra extending the language of BCK-algebras by adding a unary operator which models probabilistic reasoning.
We present a relation between state  operators and state-morphism operators and  measures and states on BCK-algebras, respectively. We study subdirectly irreducible state (morphism) BCK-algebras.
We introduce the concept of an adjoint pair in BCK-algebras and show that there is a one-to-one correspondence between adjoint pairs and state-morphism operators.
In addition, we show the generators of  quasivarieties of state-morphism BCK-algebras.
\end{abstract}

\section{ Introduction}

In 1966, Imai and Iseki \cite{2,3} introduced two classes of
abstract algebras: {\it BCK-algebras} and {\it BCI-algebras}. These
algebras have been intensively studied by many authors. For  a comprehensive overview on BCK-algebras, we recommend the book \cite{BCK}.
It is known that the class of BCK-algebras is a proper subclass
of the class of BCI-algebras. MV-algebras were introduced by Chang in \cite{MV}, in order to show that
\L ukasiewicz logic is complete with respect to evaluations of propositional variables
in the real unit interval $[0,1]$. It is well known that
the class of MV-algebras is a proper subclass of the class of BCK-algebras. Therefore, both BCK-algebras and MV-algebras are important for the study of fuzzy logic.

In \cite{Mun3}, Mundici introduced a {\it state} on MV-algebras as averaging  the truth value in \L ukasiewicz logic. States constitute measures on their associated MV-algebras
which generalize the usual probability measures on Boolean algebras.
Kroupa \cite{12} and Panti \cite{Pan} have recently shown that every state on an MV-algebra can be presented
as a usual Lebesgue integral over an appropriate space. K\"{u}hr and Mundici \cite{13} studied states via de Finetti's notion of a
coherent state with motivation in Dutch book making. Their method is applicable to other structures besides MV-algebras. Measures on pseudo BCK-algebras were studied in \cite{CD}.

Since MV-algebras with  state are not universal algebras, they do not automatically induce an assertional logic. Recently, Flaminio and Montagna
in \cite{Flamino1, Flamino2} presented an algebraizable logic using a probabilistic
approach, and its equivalent algebraic semantics is precisely the
variety of state MV-algebras. We recall that a {\it state MV-algebra}
is an MV-algebra whose language is extended by adding an operator,
$\mu$ (also called an {\it internal state}), whose properties are
inspired by ones of states. Analogues of extremal states are {\it state-morphism operators}, introduced in \cite{DD1,DD2}, where by definition, a state-morphism is an
idempotent endomorphism on an MV-algebra.

State MV-algebras generalize, for example, H\'ajek's approach,
\cite{Haj}, to fuzzy logic with modality Pr (interpreted as {\it
probably}) which has the following semantic interpretation: The
probability of an event $a$ is presented as the truth value of
Pr$(a)$. On the other hand, if $s$ is a state, then $s(a)$ is
interpreted as the average appearance of the many valued event $a.$

In \cite{Flamino1, Flamino2}, the authors found a relation between states on MV-algebras and state MV-algebras.
In \cite{DD1,DD2}, some results about characterizations of subdirectly  irreducible state-morphism MV-algebras, simple, semisimple, and  local state MV-algebras were shown. In \cite{DDL}, the authors study the variety of state-morphism
MV-algebras together with a characterization of subdirectly irreducible state MV-algebras, and some interesting characterizations of some varieties of state-morphism MV-algebras were given. These results were generalized in \cite{DRS, DKM, BoDv}.

In the present paper, we concentrate to the study of state BCK-algebras and state-morphism BCK-algebras. We show their basic properties and we characterize quasivarieties of state-morphism BCK-algebras and their generators. We present that the generator of a quasivariety of state-morphism BCK-algebras consists of diagonal state-morphism BCK-algebras. The goal of the present paper is to extend the study of state MV-algebras to state BCK-algebras. We note that in contrast to MV-algebras, in this case we have to deal  with quasivarieties because the class of BCK-algebras forms a quasivariety and not a variety.

We note that a state-morphism BCK-algebra is a special case of algebras with a distinguished idempotent endomorphism and such algebras
are not new: experts working in various areas (ranging from computer
science, Baxter algebras, set theory, category theory and homotopy
theory, see e.g. \cite{Vic, AgMo, Sch})
have considered such structures with a fixed endomorphism.

The paper is organized as follows. Section 2 gathers the elements  of BCK-algebras. In Section 3, we introduce the concept of a state
BCK-algebra and we study its properties. Then we verify a subdirectly irreducible state BCK-algebra and we
characterize this structure. We show that if $X$ is a bounded commutative BCK-algebra, then
$(X,\mu)$ is a state (morphism) MV-algebra if and only if $(X,\mu)$ is a state (morphism) BCK-algebra such that $\mu(1)=1$.
In Section 4, we study state-morphism BCK-algebras  and state ideals. Some relations between congruence relations
on state-morphism BCK-algebras and state ideals are also obtained. Then we introduce the concept of an adjoint pair in a BCK-algebra and describe a relation between state-morphism operators and adjoint pairs in BCK-algebras. Finally, Section 5 gives results on generators of quasivarieties of state-morphism BCK-algebras, and we present two open problems.

\section{Preliminaries}%2

In the section, we gather some basic notions relevant to BCK-algebras and
MV-algebras which will need in the next sections.

We say that an MV-algebra is an algebra $(M,\oplus,',0)$ of type $(2,1,0)$, where $(M,\oplus,0)$ is a
commutative monoid with neutral element $0$ and for all $x,y\in M$:

\begin{enumerate}
\item[(i)] $x''=x$;

\item[(ii)] $x\oplus 1=1$, where $1=0'$;

\item[(iii)] $x\oplus (x\oplus y')'=y\oplus (y\oplus x')'$.
\end{enumerate}

In any MV-algebra $(M,\oplus,',0)$, we can define the following further operations:
$$x\odot y=(x'\oplus y')',\quad x\ominus y=(x'\oplus y)'.
$$

A {\it state MV-algebra} is a pair $(M,\sigma)$ such that $(M,\oplus,',0)$ is an MV-algebra and $\sigma$ is a unary operation on $M$ satisfying:

\begin{enumerate}
\item[(1)] $\sigma(1)=1$;

\item[(2)] $\sigma(x')=\sigma(x)'$;

\item[(3)] $\sigma(x\oplus y)=\sigma(x)\oplus \sigma(y\ominus (x\odot y))$;

\item[(4)] $\sigma(\sigma(x)\oplus\sigma(y))=\sigma(x)\oplus\sigma(y)$.
\end{enumerate}

In \cite{DD1}, Di Nola and Dvure\v{c}enskij have introduced a {\it state-morphism operator}
on an MV-algebra $(M,\oplus,',0)$ as an MV-homomorphism
$\sigma:M\ra M$ such that $\sigma^2=\sigma$ and the pair $(M,\sigma)$ is said to be a {\it state-morphism} MV-algebra. They have proved that
the class of state-morphism MV-algebras is a proper subclass of state MV-algebras.

\begin{defn}\cite{2,3} \label{2.1}
A {\it BCK-algebra} is an algebra $(X,*,0)$ of type
$(2,0)$ satisfying the following conditions:

\begin{enumerate}
\item[(BCK1)] $((x*y)*(x*z))*(z*y)=0$;

\item[(BCK2)] $x*0=x$;

\item[(BCK3)] $x*y=0$ and $y*x=0$ imply $y=x$;

\item[(BCK4)] $0*x=0$.
\end{enumerate}
\end{defn}

A BCK-algebra $X$ is called {\it non-trivial} if $X\neq\{0\}$.
If $X$ is a BCK-algebra, then the relation $\leq$
defined by $x\leq y \Leftrightarrow x*y=0$,  $x,y\in
X$, is a partial order on $X$. In addition, for all $x,y,z\in X,$ the following
hold:
\begin{enumerate}
\item[(BCK5)] $x*x=0$;

\item[(BCK6)] $(x*y)*z=(x*z)*y$;

\item[(BCK7)] $x\leq y$ implies $x*z\leq y*z$ and $z*y\leq z*x$;

\item[(BCK8)] $x*(x*(x*y))=x*y$;

\item[(BCK9)] $(x*y)*(x*z)\leq z*y$ and $(y*x)*(z*x)\leq y*z$.
\end{enumerate}

In a BCK-algebra $X$, we define $x*y^0=x$ and $x*y^{n}=(x*y^{n-1})*y$
for any integer  $n\ge 1$ and all $x,y \in X.$ A BCK-algebra $(X,*,0)$ is
called {\it bounded} if $(X,\leq)$ has the greatest element, where $\leq$ is
the above defined partially order relation. Let use
denote by $1$ the greatest element of $X$ (if it exits). In bounded BCK-algebras, we usually write $Nx$ instead of $1*x$.
A BCK-algebra $(X,*,0)$ is called a {\it commutative} BCK-algebra
if $x*(x*y)=y*(y*x)$ for all $x,y\in X$. Each commutative BCK-algebra is a lover semilattice and $x\wedge y=x*(x*y)$
for all $x,y\in X$ (see \cite{BCK}). Let $(X,*,0)$ and $(Y,*,0)$ be two BCK-algebras.
A map $f:X\ra Y$ is called a {\it homomorphism}
if $f(a*b)=f(a)*f(b)$ for all $a,b\in X$. Then $f(0)=0$
(since $f(0)=f(0*0)=f(0)*f(0)=0$).

A non-empty subset $I$ of a BCK-algebra $X$ is
called an {\it ideal} if (1) $0\in I$, (2) $y*x\in I$ and
$x\in I$ imply that $y\in I$ for all $x,y\in X$.
We denote by ${\rm I}(X)$, the set of all ideals of $X$. An ideal $I$ of a BCK-algebra $X$ is
called {\it proper} if $I\neq X$. Suppose that
$(X,*,0)$ and $(Y,*,0)$ are two BCK-algebras and $f:X\ra Y$ is a homomorphism,
then $\Ker(f)=f^{-1}(\{0\})$ is an ideal of $X$.
Let use denote by $\langle S\rangle$ the least ideal of $X$ containing $S$,
where $S$ is a subset of a BCK-algebra $X.$ It is called the ideal
generated by $S$. If $S$ is a subset of more BCK-algebras, we will use $\langle S \rangle_X$ to specify a concrete BCK-algebra $X.$ Instead of $\langle \{a\}\rangle$ we will write rather $\langle a \rangle,$ where $a \in X.$

\begin{thm}\cite{chang} \label{2.2}
Let $S$ be a subset of a BCK-algebra $(X,*,0).$ Then
$$\langle S\rangle =\{x\in
X \mid(\cdots((x*a_{1})*a_{2})*\cdots)*a_{n}=0 \mbox{ for some $n\in
\mathbb{N}$ and some $a_{1},\ldots,a_{n}\in S\cup \{0\}$}\}.
$$

Moreover, if $I$ is an ideal of $X$, then
$$\langle I\cup S \rangle=\{x\in
X \mid (\cdots((x*a_{1})*a_{2})*\cdots)*a_{n}\in I  \mbox{ for some $n\in
\mathbb{N}$ and some $a_{1},\ldots,a_{n}\in S$}\}.$$
\end{thm}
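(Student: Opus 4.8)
The plan is to prove each identity by showing that the set on its right-hand side is the \emph{smallest} ideal containing the relevant generating set. Write $A$ for the right-hand side of the first displayed formula, and abbreviate the left-associated product $(\cdots((x*a_1)*a_2)*\cdots)*a_n$ as $x*a_1*\cdots*a_n$; by (BCK6) this expression does not depend on the order of the factors $a_1,\ldots,a_n$, a fact I will use repeatedly.

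First I would prove $A\s\langle S\rangle$ by induction on the length $n$ of a witnessing sequence. Since $\langle S\rangle$ is an ideal containing $S$, it contains $S\cup\{0\}$; so if $x*a_1=0$, then $x*a_1\in\langle S\rangle$ and $a_1\in\langle S\rangle$, whence $x\in\langle S\rangle$ by the defining implication of an ideal. For the inductive step, given $x*a_1*\cdots*a_n=0$ I apply the induction hypothesis to $x*a_1$ (which has a witnessing sequence of length $n-1$) to obtain $x*a_1\in\langle S\rangle$, and then conclude $x\in\langle S\rangle$ from $a_1\in\langle S\rangle$. The reverse inclusion reduces to checking that $A$ is an ideal containing $S$, since $\langle S\rangle$ is the least such ideal; containment of $0$ and of $S$ is immediate from (BCK4) and (BCK5). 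The one substantial point — and the main obstacle — is closure under the ideal implication: assuming $x\in A$ and $y*x\in A$, say $x*a_1*\cdots*a_n=0$ and $(y*x)*b_1*\cdots*b_m=0$, I must produce a single witnessing sequence for $y$. Here I would use (BCK6) to rewrite $(y*x)*b_1*\cdots*b_m$ as $(y*b_1*\cdots*b_m)*x$, so that $y*b_1*\cdots*b_m\leq x$; then iterated (BCK7) gives $(y*b_1*\cdots*b_m)*a_1*\cdots*a_n\leq x*a_1*\cdots*a_n=0$, and since $0$ is the least element (so $u\leq 0$ forces $u=0$ by (BCK2)), the concatenated sequence $b_1,\ldots,b_m,a_1,\ldots,a_n$ witnesses $y\in A$.

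For the second identity, write $B$ for its right-hand side, reading the empty product ($n=0$) as $x$ itself, so that $I\s B$. The inclusion $B\s\langle I\cup S\rangle$ follows by the same peeling induction as above, now using $x*a_1*\cdots*a_n\in I\s\langle I\cup S\rangle$ together with $S\s\langle I\cup S\rangle$. For $\langle I\cup S\rangle\s B$ I again verify that $B$ is an ideal containing $I\cup S$: one has $I\s B$ by the empty-product reading, and $S\s B$ since $a*a=0\in I$.

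The closure step for $B$ is the genuine analogue of the obstacle above, and is the place where the extra work of the theorem lies. Given $(y*x)*b_1*\cdots*b_m\in I$ and $x*a_1*\cdots*a_n\in I$, (BCK6) yields $(y*b_1*\cdots*b_m)*x\in I$; setting $u=y*b_1*\cdots*b_m$, the key is the inequality $(u*a_1*\cdots*a_n)*(x*a_1*\cdots*a_n)\leq u*x$, which I would establish by induction on $n$, the inductive step being exactly the second inequality in (BCK9) applied to $u*a_1*\cdots*a_{n-1}$, $a_n$, and $x*a_1*\cdots*a_{n-1}$. Since $u*x\in I$ and ideals are down-sets (if $p\leq q\in I$ then $p*q=0\in I$, so $p\in I$ by the ideal implication), this forces $(u*a_1*\cdots*a_n)*(x*a_1*\cdots*a_n)\in I$; as $x*a_1*\cdots*a_n\in I$ as well, a final application of the ideal implication gives $u*a_1*\cdots*a_n\in I$, so the sequence $b_1,\ldots,b_m,a_1,\ldots,a_n$ witnesses $y\in B$, completing the proof.
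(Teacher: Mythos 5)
Your proposal is correct. Note that the paper itself gives no proof of Theorem \ref{2.2}: it is quoted from the reference [chang] (Yisheng's book), so there is no in-paper argument to compare against. What you wrote is the standard textbook proof of this characterization — showing the right-hand side is the least ideal containing the generating set — and all the delicate points are handled properly: the concatenation of witnessing sequences via (BCK6)/(BCK7) and $u\leq 0\Rightarrow u=0$ in the first part, and in the second part the inequality $(u*a_1*\cdots*a_n)*(x*a_1*\cdots*a_n)\leq u*x$ proved by induction from (BCK9), combined with the fact that ideals are downward closed. Your convention of allowing the empty product ($n=0$) in the second identity is also a sensible repair of a genuine edge case in the statement as printed (when $S=\emptyset$, the displayed set would otherwise be empty while $\langle I\cup\emptyset\rangle=I$).
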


Let $I$ be an ideal of a BCK-algebra $(X,*,0)$. Then the relation
$\theta_{I},$ defined by $(x,y)\in \theta_{I}$ if and only if $x*y,y*x\in I,$ is a congruence relation on $X$.
Let us denote by $x/I$ or $[x]$ the set $\{y\in X \mid (x,y)\in\theta_{I}\}$ for all $x\in X$. Then  $(X/I,*,0/I)$
is a BCK-algebra, when $X/I:=\{x/I \mid x\in X\}$ and $x/I*y/I:=(x*y)/I$ for all $x,y\in X$ (see \cite{BCK}).

An ideal $I$ of a BCK-algebra $(X,*,0)$ is called {\it commutative} if $x*y\in I$ implies that
$x*(y*(y*x))\in I$ for all $x,y\in X$. If $I$ is a commutative ideal, the BCK-algebra $X/I$ is a
commutative BCK-algebra \cite[Thm 2.5.6]{chang}.

\begin{thm}\label{con BCK}
Let $(X,*,0)$ be a BCK-algebra and $\theta$ be a congruence relation on $X$.
Then $[0]_\theta$ is an ideal of $X$. Moreover, if $I=[0]_\theta$, then $\theta_{I}=\theta$.
\end{thm}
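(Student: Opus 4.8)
The plan is to establish the two assertions separately, obtaining the ``moreover'' part from the fact that the quotient $X/\theta$ is again a BCK-algebra. Throughout write $I:=[0]_\theta$.

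First I would verify that $I$ is an ideal. Reflexivity of $\theta$ gives $0\in I$. For the closure condition, suppose $y*x\in I$ and $x\in I$, that is $(y*x,0)\in\theta$ and $(x,0)\in\theta$. Since $\theta$ is a congruence, $(x,0)\in\theta$ yields $(y*x,y*0)\in\theta$, and $y*0=y$ by (BCK2); hence $(y*x,y)\in\theta$. Combining this with $(y*x,0)\in\theta$ by symmetry and transitivity gives $(y,0)\in\theta$, i.e. $y\in I$. Thus $I$ is an ideal.

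For the equality $\theta_I=\theta$, the inclusion $\theta\subseteq\theta_I$ is routine: if $(x,y)\in\theta$ then, using compatibility of $\theta$ with $*$ together with (BCK5), we get $(x*y,y*y)=(x*y,0)\in\theta$ and symmetrically $(y*x,0)\in\theta$, so $x*y,y*x\in I$ and $(x,y)\in\theta_I$. The real content is the reverse inclusion $\theta_I\subseteq\theta$. Take $(x,y)\in\theta_I$, so $x*y\in I$ and $y*x\in I$, equivalently $(x*y,0)\in\theta$ and $(y*x,0)\in\theta$. Passing to $X/\theta$, whose zero element is the class $[0]_\theta=I$, these relations read $[x]*[y]=[x*y]=[0]$ and $[y]*[x]=[0]$. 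Applying the antisymmetry axiom (BCK3) inside $X/\theta$ forces $[x]=[y]$, that is $(x,y)\in\theta$.

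I expect the main obstacle to be exactly the step that legitimizes the use of (BCK3) in the quotient. The purely equational axioms (BCK1), (BCK2), (BCK4), and all their derived equations, are automatically inherited by any homomorphic image, so $X/\theta$ satisfies them; but (BCK3) is a quasi-identity, and quasi-identities need not survive homomorphic images in general. Phrased order-theoretically, the issue is that the relation on $X/\theta$ given by $[x]\leq[y]\Leftrightarrow x*y\in I$ is visibly reflexive by (BCK5) and transitive — from $(x*z)*(x*y)\leq y*z$, a case of (BCK9), together with the ideal closure of $I$ one deduces $x*y,y*z\in I\Rightarrow x*z\in I$ — yet its antisymmetry is precisely the inclusion $\theta_I\subseteq\theta$ we are after. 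I would therefore discharge this point by invoking the standard fact that the quotient of a BCK-algebra by a congruence is again a BCK-algebra (so that (BCK3) does hold in $X/\theta$), which is the essential and nontrivial ingredient; with that in hand, the two inclusions combine to give $\theta_I=\theta$.
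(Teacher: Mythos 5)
Your first two steps are correct and unproblematic: the verification that $[0]_\theta$ is an ideal and the inclusion $\theta\subseteq\theta_I$ go through exactly as you write them. The genuine gap is the final step. The ``standard fact'' you invoke --- that the quotient of a BCK-algebra by an \emph{arbitrary} congruence is again a BCK-algebra --- is false. Since, up to isomorphism, quotients by congruences are precisely homomorphic images, that ``fact'' is equivalent to the class of BCK-algebras being closed under homomorphic images; but BCK-algebras form a proper quasivariety and not a variety (Wro\'nski's theorem), a point this very paper records in Section 5: the system of BCK-algebras ``is not a variety because it is not closed under homomorphic images'' \cite[Thm VI.4.1]{BCK}. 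The quotient result that actually is available (and is quoted in Section 2 of the paper) says only that $X/I$ is a BCK-algebra when one factors by the congruence $\theta_I$ \emph{induced by an ideal} $I$; it gives nothing for an arbitrary congruence $\theta$.

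Moreover, for the fixed $\theta$ at hand, the assertion ``(BCK3) holds in $X/\theta$'' is not an auxiliary lemma but, as your own order-theoretic reformulation already shows, literally a restatement of the inclusion $\theta_I\subseteq\theta$ you are trying to prove: $[x]*[y]=[0]$ in $X/\theta$ means exactly $x*y\in[0]_\theta$, so (BCK3) in $X/\theta$ \emph{is} the implication $x*y,\,y*x\in[0]_\theta\Rightarrow(x,y)\in\theta$. Your proposed discharge is therefore circular, and it cannot be repaired by any preservation argument: if that implication held for every congruence on every BCK-algebra, then every congruence would be an ideal congruence, every homomorphic image would be a BCK-algebra, and BCK-algebras would form a variety, contradicting Wro\'nski. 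This is exactly the content that the paper does not prove internally but outsources to \cite[Prop 1.5.9, Prop 1.5.11, Cor 1.5.12]{chang}; a correct write-up must either engage with how ``congruence'' is delimited in that source, or restrict the statement to congruences whose quotient is known to be a BCK-algebra --- the elementary route you propose cannot close this gap.
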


\begin{proof}
See \cite[Prop 1.5.9, Prop. 1.5.11, Cor. 1.5.12]{chang}.
\end{proof}

\begin{defn}\cite{BZ,chang}
Let $I$ be a proper ideal of a BCK-algebra $(X,*,0)$. Then $I$ is called a
\begin{itemize}
\item {\it prime} ideal if $\langle x\rangle \cap \langle y\rangle\s I$ implies $x\in I$ or $y\in I$ for all $x,y\in X$;
\item {\it maximal} ideal if $\langle I\cup \{x\}\rangle=X$ for all $x\in X-I$.
\end{itemize}

We use $\Max(X)$ and $\Spec(X)$ to denote the set of all maximal and prime ideals of $X$,
respectively. In each BCK-algebra $X$, $\Max(X)\s \Spec(X)$ (see \cite[Thm 3.7]{BZ}).
A BCK-algebra $(X,*,0)$ is called {\it simple} if it has only two ideals and it is
called {\it semisimple} if $\Rad(X):=\bigcap \Max(X)=\{0\}$.
\end{defn}

\begin{defn}\cite{chang}
A BCK-algebra $(X,*,0)$ is {\it positive implicative} if $(x*y)*z=(x*z)*(y*z)$ for all $x,y,z\in X$.
\end{defn}

If $X=[0,a)$ or $X=[0,a]$, where $a\in \mathbb{R}$, or $X =[0,\infty)$, we define the binary operation $*_\mathbb R$ on
$X$ by $x*_{\mathbb{R}}y=\max\{0,x-y\}$.
Then $(X,*_{\mathbb{R}},0)$ is a commutative BCK-algebra (see \cite{BCK}).

\begin{defn}\label{2.3}\cite{Dvu1}
Let $(X,*,0)$ be a BCK-algebra and $m:X\ra [0,\infty]$ be a map such that, for all $x,y\in [0,1]$,
\begin{itemize}
  \item[(i)] if $m(x*y)=m(x)-m(y)$, whenever $y\leq x,$ then $m$ is said to be a {\it measure};
  \item[(ii)] if $1\in X$ and $m$ is a measure with $m(1)=1$, then $m$ is said to be a state;
  \item[(iii)] if $m(x*y)=\max\{0,m(x)-m(y)\}$, then $m$ is said to be a {\it measure-morphism};
  \item[(iv)] if $1\in X$ and $m$ is a measure-morphism with $m(1)=1$, then $m$ is said to be a {\it state-morphism}.
 \end{itemize}
\end{defn}

%%%%%%%%%%%%%%%%%%%%%%%%%%%%%%%%%%%%%%%%%%%%%%%%%%%%%%%%%%%%%%%%%%%%%%%%%%%%%%%%
%%%%%%%%%%%%%%%%%%%%%%%%%%%%%%%%%%%%%%%%%%%%%%%%%%%%%%%%%%%%%%%%%%%%%%%%%%%%%%%%
\section{\bf State BCK-algebras}%3

In the section, the concept of left and right state BCK-algebras is defined as a
generalization of state MV-algebras, and its properties are studied. We introduce state
ideals and congruence relations of right or left state BCK-algebras, and relations between them are obtained. Finally,
we characterize subdirectly irreducible state BCK-algebras.

From now on, in this paper, $(X,*,0)$ or simply $X$ is a BCK-algebra, unless otherwise specified.

\begin{defn}\label{3.1}
A map $\mu: X\ra X$ is called a {\it left (right) state operator } on $X$ if it satisfies the following conditions:
 \begin{itemize}
 \item[(S0)] $x*y=0$ implies $\mu(x)*\mu(y)=0;$
 \item[(S1)] $\mu(x*y)=\mu(x)*\mu(x*(x*y))$ \quad ($\mu(x*y)=\mu(x)*\mu(y*(y*x))$);
  \item[(S2)] $\mu(\mu(x)*\mu(y))=\mu(x)*\mu(y)$.
 \end{itemize}
A {\it left (right) state BCK-algebra} is a pair $(X,\mu)$,
where $X$ is a BCK-algebra and $\mu$ is a left (right) state operator on $X$.
\end{defn}

Clearly, if $X$ is a commutative BCK-algebra, then $\mu$ is a right state operator on $X$ if and only if it is a left state operator.
In the next proposition, we describe the basic properties of left (right) state operators.

\begin{prop}\label{3.2}
Let $(X,\mu)$ be a left (right) state BCK-algebra. Then, for any $x,y,x_1,\ldots,x_n\in X$,
\begin{itemize}
  \item[{\rm (i)}] $\mu(0)=0$ and $\mu(\mu(x))=\mu(x).$
  \item[{\rm (ii)}] $\mu(x)*\mu(y)\leq \mu(x*y)$. More generally,
  $$
  (\cdots((\mu(x)*\mu(x_1))*\mu(x_2))*\cdots)*\mu(x_n)\leq \mu((\cdots((x*x_1)*x_2)*\cdots)*x_n).
  $$
  \item[{\rm (iii)}] $\Ker(\mu):=\mu^{-1}(\{0\})$ is an ideal of $X.$
  \item[{\rm (iv)}] $\mu(X):=\{\mu(x) \mid x \in X\}$ is a subalgebra of $X$.
\item[{\rm (v)}] $\Ker(\mu)\cap {\rm Im}(\mu)=\{0\}.$
 \end{itemize}
\end{prop}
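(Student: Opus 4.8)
The plan is to prove (i) first, since every later part leans on it, then to extract from (S1) the single key inequality (ii), from which (iii)--(v) fall out quickly. First I would settle (i). Instantiating (S1) at $x=y=0$ and using $0*0=0$ (BCK5) together with $0*(0*0)=0$ gives $\mu(0)=\mu(0)*\mu(0)$, and the right-hand side is $0$ by (BCK5), so $\mu(0)=0$; the same computation works verbatim for the right operator. Idempotency then drops out of (S2) with $y=0$: since $\mu(0)=0$ is now available, $\mu(\mu(x))=\mu(\mu(x)*\mu(0))=\mu(x)*\mu(0)=\mu(x)*0=\mu(x)$ by (BCK2). The order matters here, as the second identity consumes the first.

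The heart of the proposition is (ii), and this is where I expect the real work to be. The idea is that (S1) already records an \emph{equality} for $\mu(x*y)$; I would turn it into the desired \emph{inequality} by feeding in monotonicity (S0) and the order behaviour (BCK7). The auxiliary fact I would isolate is $x*(x*y)\le y$, which holds because $(x*(x*y))*y=(x*y)*(x*y)=0$ by (BCK6) and (BCK5); dually $y*(y*x)\le y$ using (BCK6), (BCK4), (BCK5). In the left case (S1) reads $\mu(x*y)=\mu(x)*\mu(x*(x*y))$, and since $x*(x*y)\le y$, monotonicity (S0) gives $\mu(x*(x*y))\le\mu(y)$; the second clause of (BCK7) (a larger second argument yields a smaller term) then yields $\mu(x)*\mu(y)\le\mu(x)*\mu(x*(x*y))=\mu(x*y)$. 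The right case is identical using $y*(y*x)\le y$. The iterated inequality I would prove by induction on $n$, the case $n=1$ being what was just shown: writing $A:=(\cdots(\mu(x)*\mu(x_1))*\cdots)*\mu(x_{n-1})$ and $B:=(\cdots(x*x_1)*\cdots)*x_{n-1}$, the inductive hypothesis is $A\le\mu(B)$, the first clause of (BCK7) gives $A*\mu(x_n)\le\mu(B)*\mu(x_n)$, the base case gives $\mu(B)*\mu(x_n)\le\mu(B*x_n)$, and transitivity of the order $\le$ closes the step.

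The remaining three parts are short consequences. For (iii), $0\in\Ker(\mu)$ holds by (i); and if $\mu(x)=0$ and $\mu(y*x)=0$, then (ii) gives $\mu(y)*\mu(x)\le\mu(y*x)$, where the left side equals $\mu(y)*0=\mu(y)$ by (BCK2) and the right side is $0$, so $\mu(y)\le 0$, i.e. $\mu(y)*0=0$, forcing $\mu(y)=0$; hence $\Ker(\mu)$ is an ideal. For (iv), $0=\mu(0)\in\mu(X)$ by (i), and for any $\mu(x),\mu(y)$, condition (S2) gives $\mu(x)*\mu(y)=\mu(\mu(x)*\mu(y))\in\mu(X)$, so $\mu(X)$ is closed under $*$ and is a subalgebra. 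For (v), if $z\in\Ker(\mu)\cap\mathrm{Im}(\mu)$, write $z=\mu(w)$; then idempotency (i) gives $z=\mu(w)=\mu(\mu(w))=\mu(z)$, and $z\in\Ker(\mu)$ means $\mu(z)=0$, so $z=0$.

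The one step I expect to be genuinely delicate, rather than bookkeeping, is (ii): recognizing that the equality in (S1) should be attacked through the auxiliary inequalities $x*(x*y)\le y$ and $y*(y*x)\le y$, and then applying the correct (order-reversing) clause of (BCK7) to the second argument. Once that inequality is in hand, everything else is a direct application of (i), (S2), (BCK2), and the ideal definition.
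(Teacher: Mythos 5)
Your proof is correct and follows essentially the same route as the paper's: (i) from (S1)/(S2) with (BCK2), the key inequality (ii) via $x*(x*y)\le y$, monotonicity (S0), and the order-reversing clause of (BCK7), and then (iii)--(v) as direct consequences. The only differences are cosmetic — you verify the auxiliary inequalities, the induction in (ii), and part (v) explicitly, where the paper leaves them as standard or evident.
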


\begin{proof}
We prove this theorem only for a left state BCK-algebra. The proof for a right state BCK-algebra is similar.

(i) By (BCK4) and (BCK8), we have $\mu(0)=\mu(0*0)=\mu(0)*\mu(0*(0*0))=\mu(0)*\mu(0)=0$. Moreover, by (S2) and (BCK2), we have
$\mu(\mu(x))=\mu(\mu(x)*0)=\mu(\mu(x)*\mu(0))=\mu(x)*\mu(0)=\mu(x)$.

(ii) Let $x,y\in X$. Since $x*(x*y)\leq y$, then $\mu(x*(x*y))\leq \mu(y)$, and so by (BCK7), we get that $\mu(x)*\mu(y)\leq \mu(x)*\mu(x*(x*y))=\mu (x*y)$. The proof of the second part follows from (BCK7).

(iii) By (i), $0\in \Ker(\mu)$. Let $y*x,x\in \Ker(\mu),$ where $x,y\in X$.
Then $\mu(x)=\mu(y*x)=0$. It follows from (ii) that $\mu(y)=\mu(y)*0=\mu(y)*\mu(x)\leq\mu(y*x)=0$,  hence $y\in \Ker(\mu)$. Thus, $\Ker(\mu)$ is an ideal of $X$.

(iv) By (i), $0\in \mu(X)$. Let $a,b\in X$. Then by (S2), $\mu(\mu(a)*\mu(a))=\mu(a)*\mu(b)$ and so $\mu(a)*\mu(b)\in\mu(X)$. Therefore, $\mu(X)$ is a subalgebra of $X$.

(v) It is evident.
\end{proof}

In Theorem \ref{3.3}, we attempt to find a relation between measures and states on BCK-algebras and state BCK-algebras.

\begin{thm}\label{3.3}
Let $a\in[0,1]$, $X=([0,a),*_{\mathbb{R}},0)$ and $(X,\mu)$ be a left state BCK-algebra. Then $\mu:X\ra [0,1]$ is a measure.

In addition, if $X=([0,1],*_{\mathbb{R}},0)$ and
$(X,\mu)$ is a left state BCK-algebra such that $\mu(1)=1$, then $\mu:X\ra [0,1]$ is a state-morphism.
\end{thm}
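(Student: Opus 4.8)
The plan is to exploit commutativity of $X$. Since $([0,a),*_{\mathbb{R}},0)$ (and likewise $([0,1],*_{\mathbb{R}},0)$) is a commutative BCK-algebra, the term $x*(x*y)$ occurring in (S1) equals the meet $x\wedge y=\min\{x,y\}$, so (S1) reads $\mu(x*_{\mathbb{R}}y)=\mu(x)*_{\mathbb{R}}\mu(x\wedge y)=\max\{0,\mu(x)-\mu(x\wedge y)\}$ by the very definition of $*_{\mathbb{R}}$. The single preliminary fact I would record first is that $\mu$ is order-preserving: if $u\leq v$, then $u*_{\mathbb{R}}v=0$, so (S0) forces $\mu(u)*_{\mathbb{R}}\mu(v)=0$, i.e. $\mu(u)\leq\mu(v)$. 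This monotonicity is precisely what lets me discard the truncation $\max\{0,\cdot\}$ at the appropriate moments.

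For the first statement I would take $y\leq x$. Then $x\wedge y=y$ and $x*_{\mathbb{R}}y=x-y$, so the simplified (S1) gives $\mu(x-y)=\max\{0,\mu(x)-\mu(y)\}$. Because $y\leq x$ yields $\mu(y)\leq\mu(x)$ by monotonicity, the truncation is vacuous and $\mu(x-y)=\mu(x)-\mu(y)$. Since moreover $\mu$ maps $X$ into $[0,a)\subseteq[0,1]$, this is exactly condition (i) of Definition \ref{2.3}, so $\mu$ is a measure.

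For the second statement, where $X=[0,1]$ and $\mu(1)=1$, I would verify the measure-morphism identity $\mu(x*_{\mathbb{R}}y)=\max\{0,\mu(x)-\mu(y)\}$ for \emph{all} $x,y$ by a two-case split. If $y\leq x$, the computation above gives the identity directly. If $x\leq y$, then $x*_{\mathbb{R}}y=0$, so the left-hand side is $\mu(0)=0$ by Proposition \ref{3.2}(i), while monotonicity gives $\mu(x)\leq\mu(y)$, so the right-hand side $\max\{0,\mu(x)-\mu(y)\}$ is also $0$. Combining the cases yields the measure-morphism identity throughout $[0,1]$; together with the hypothesis $\mu(1)=1$ this is condition (iv) of Definition \ref{2.3}, so $\mu$ is a state-morphism.

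I do not expect a genuine obstacle here, only one point requiring care: the correct bookkeeping of the truncation $\max\{0,\cdot\}$ inherent in $*_{\mathbb{R}}$. The whole argument hinges on two reductions—rewriting $x*(x*y)$ as $x\wedge y$ via commutativity, and extracting order-preservation of $\mu$ from (S0)—after which the two cases $y\leq x$ and $x\leq y$ close the proof. It is worth noting that axiom (S2) is not needed for this particular theorem.
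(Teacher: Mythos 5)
Your proof is correct and follows essentially the same route as the paper's: use commutativity of $([0,a),*_{\mathbb{R}},0)$ to reduce $x*(x*y)$ to $y$ (resp.\ $x\wedge y$) in (S1), then split on the linear order into the cases $y\leq x$ and $x\leq y$. The only difference is one of care, in the paper's favor of you: the paper stops at $\mu(x*y)=\mu(x)*_{\mathbb{R}}\mu(y)$ and leaves implicit the step where monotonicity (from (S0)) removes the truncation $\max\{0,\cdot\}$ so that the measure identity $\mu(x*y)=\mu(x)-\mu(y)$ actually holds, whereas you make this explicit; your observation that (S2) is never used is also accurate.
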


\begin{proof}
Let $x,y\in X$ such that $y\leq x$. For simplicity, we will write $*=*_\mathbb R.$ Then $\mu(x*y)=\mu(x)*\mu(x*(x*y))$.
Since $X=([0,a),*_{\mathbb{R}},0)$ is a commutative BCK-algebra, then
$x*(x*y)=y*(y*x)=y*0=y$ and so $\mu(x*y)=\mu(x)*\mu(x*(x*y))=\mu(x)*\mu(y)$.
Therefore, $\mu:X\ra [0,1]$ is a measure.

Now, assume that
$X=([0,1],*_{\mathbb{R}},0)$ and $(X,\mu)$ is a left state BCK-algebra. Let $x,y\in X$. Then $\mu(x*y)=\mu(x)*\mu(x*(x*y))$. Since $X$ is linearly ordered, we have two cases. If $x\leq y$, then $\mu(x*y)=\mu(0)=0$ and by Proposition \ref{3.2}(iv), $\mu(x)*\mu(y)=0$ and so $\mu(x*y)=\mu(x)*\mu(y)$.
If $y\leq x$, then $x*(x*y)=y$ (since $([0,1],*_{\mathbb{R}},0)$ is a
commutative BCK-algebra) and so $\mu(x*y)=\mu(x)*\mu(y)$.
Therefore, $\mu:X\ra [0,1]$ is a state-morphism.
\end{proof}

\begin{prop}\label{3.3.1}
Let $(X,\mu)$ be a right state BCK-algebra. Then
 \begin{itemize}
 \item[{\rm (i)}] $y\leq x$ implies $\mu(x*y)=\mu(x)*\mu(y)$ for all $x,y\in X$.
 \item[{\rm (ii)}] $\mu^{-1}(\{0\})$ is a commutative ideal of $X$. Moreover, the map
 $\overline{\mu}:X/\Ker(\mu)\ra X/\Ker(\mu)$  defined by
 $\overline{\mu}(x/\Ker(\mu))=\mu(x)/\Ker(\mu)$ is both a right and left state operator
 on $X/\Ker(\mu)$.
 \item[{\rm (iii)}] $(X,\mu)$ is a left state BCK-algebra.
 \end{itemize}
\end{prop}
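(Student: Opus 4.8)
I would prove the three parts in the order (i), (iii), (ii), because (iii) is an immediate consequence of (i), and the commutativity argument in (ii) rests on the same inequality used for (i). Throughout I will use that a right state operator is monotone, i.e. $a\leq b$ implies $\mu(a)\leq\mu(b)$ (just apply (S0) to $a*b=0$), together with the two order facts $x*(x*y)\leq x$ and $y*(y*x)\leq x$, each a two-line computation from (BCK6), (BCK5), (BCK4) (for instance $(y*(y*x))*x=(y*x)*(y*x)=0$). Part (i) itself is then instant: if $y\leq x$ then $y*x=0$, so $y*(y*x)=y*0=y$ by (BCK2), and substituting this into the right-hand form of (S1) gives $\mu(x*y)=\mu(x)*\mu(y*(y*x))=\mu(x)*\mu(y)$.

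For (iii) only the left form of (S1), namely $\mu(x*y)=\mu(x)*\mu(x*(x*y))$, has to be checked, since (S0) and (S2) are shared by left and right state operators. The key idea is that, because $x*(x*y)\leq x$, part (i) applied to the pair $x$ and $x*(x*y)$ yields $\mu(x*(x*(x*y)))=\mu(x)*\mu(x*(x*y))$; and by (BCK8) the argument on the left simplifies, $x*(x*(x*y))=x*y$, so the left-hand side equals $\mu(x*y)$. This gives the left form of (S1) at once, so $(X,\mu)$ is a left state BCK-algebra. I expect this to be the slickest of the three steps.

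For (ii), $\Ker(\mu)$ is an ideal by Proposition \ref{3.2}(iii), so three things remain. \emph{Commutativity:} assuming $x*y\in\Ker(\mu)$, the right form of (S1) gives $\mu(x)*\mu(y*(y*x))=\mu(x*y)=0$; since $y*(y*x)\leq x$, part (i) turns this into $\mu(x*(y*(y*x)))=\mu(x)*\mu(y*(y*x))=0$, i.e. $x*(y*(y*x))\in\Ker(\mu)$, which is exactly the commutativity condition, whence $X/\Ker(\mu)$ is commutative. \emph{Well-definedness of $\overline{\mu}$:} if $x*y,y*x\in\Ker(\mu)$, then from $\mu(x*y)=0$ and (S1) we get $\mu(x)\leq\mu(y*(y*x))\leq\mu(y)$ (the last step by monotonicity and $y*(y*x)\leq y$), and symmetrically $\mu(y)\leq\mu(x)$, so $\mu(x)=\mu(y)$ and hence $\overline{\mu}$ is well defined. \emph{State operator:} (S0), (S2) and the right form of (S1) for $\overline{\mu}$ transfer verbatim from the corresponding identities for $\mu$ via $\overline{\mu}(x/\Ker(\mu))=\mu(x)/\Ker(\mu)$ and the fact that $*$ descends to the quotient; and since $X/\Ker(\mu)$ is commutative, the remark after Definition \ref{3.1} shows $\overline{\mu}$ is simultaneously a left state operator (alternatively one may quote part (iii)).

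The only place where care is genuinely needed is the commutativity of $\Ker(\mu)$ together with the well-definedness of $\overline{\mu}$; both hinge on exactly the same two ingredients, the monotonicity of $\mu$ and the inequality $y*(y*x)\leq x$ feeding into part (i), so once these are isolated every remaining verification is purely routine.
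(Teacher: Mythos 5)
Your proofs of (i) and (ii) are correct and essentially identical to the paper's: part (i) is the same one-line substitution into the right form of (S1), and your commutativity argument for $\Ker(\mu)$ (feed $y*(y*x)\leq x$ into part (i)) is exactly the paper's; only your well-definedness step differs cosmetically, using monotonicity and $y*(y*x)\leq y$ where the paper invokes Proposition \ref{3.2}(ii) and (BCK3) — both are fine. Where you genuinely diverge is part (iii). The paper derives the left form of (S1) \emph{through} part (ii): since $\Ker(\mu)$ is a commutative ideal, $X/\Ker(\mu)$ is a commutative BCK-algebra by the cited result of Yisheng, hence $(x*(x*y))/\Ker(\mu)=(y*(y*x))/\Ker(\mu)$, and the well-definedness argument then gives $\mu(x*(x*y))=\mu(y*(y*x))$, from which left (S1) follows from right (S1). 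Your route is shorter and more self-contained: apply (i) to the pair $x$ and $x*(x*y)$ (legitimate since $x*(x*y)\leq x$) to get $\mu(x*(x*(x*y)))=\mu(x)*\mu(x*(x*y))$, then collapse the left side via (BCK8). This avoids the quotient construction and the external citation entirely, and it exposes a cleaner logical structure — (iii) depends only on (i), not on the commutativity of the kernel — which is why your ordering (i), (iii), (ii) works. What the paper's route buys instead is economy of reuse: the commutativity of $X/\Ker(\mu)$ is needed anyway for the claim about $\overline{\mu}$ in (ii), and the paper simply leverages that same fact twice. One small caveat on your closing remark: (S0) for $\overline{\mu}$ does not transfer quite ``verbatim,'' since its hypothesis on the quotient is $x*y\in\Ker(\mu)$ rather than $x*y=0$; one needs Proposition \ref{3.2}(ii) to conclude $\mu(x)*\mu(y)=0\in\Ker(\mu)$. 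This is routine (and the paper glosses over it with ``it is easy to show''), but it is a check, not a tautology.
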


\begin{proof}
(i) Let $x,y\in X$ such that $y\leq x$. Then $\mu(x*y)=\mu(x)*\mu(y*(y*x))=\mu(x)*\mu(y*0)=\mu(x)*\mu(y)$.

(ii) By Proposition \ref{3.3}(i), $0\in \mu^{-1}(\{0\})$. Let $x,y*x\in \mu^{-1}(\{0\})$.
Then $\mu(x)=\mu(y*x)=0$ and so
$\mu(y)*\mu(x*(x*y))=0$. Since $\mu(x*(x*y))\leq \mu(x)=0$, then $\mu(y)=0$.
Hence, $\mu^{-1}(\{0\})$ is an ideal of $X$.
Now, let $x*y\in \mu^{-1}(\{0\}).$ %and $a=y*(y*x)$.
Since $y*(y*x)\le x,$ by (i), we have
$$ 0= \mu(x*y)= \mu(x)*\mu(y*(y*x))= \mu(x*(y*(y*x))),
$$
which concludes that $x*(y*(y*x))\in \mu^{-1}(\{0\})$. Thus, $\mu^{-1}(\{0\})$ is a commutative ideal of $X$.

It is easy to show that
$\overline{\mu},$ defined by $\overline{\mu}(x/\Ker(\mu)):=\mu(x)/\Ker(\mu),$ ($x \in X$),  is a right state operator on $X/\Ker(\mu).$ In fact, if $x/\Ker(\mu)=y/\Ker(\mu)$,
then $x*y,y*x\in \Ker(\mu)$ and so $\mu(x*y)=\mu(y*x)=0$. Hence
by Proposition \ref{3.2}(ii), $\mu(x)*\mu(y)=\mu(y)*\mu(x)=0$ and so $\mu(x)=\mu(y)$.
Thus, $\overline{\mu}$ is well defined. Since $\Ker(\mu)$ is a
commutative ideal of $X$, then $X/\Ker(\mu)$ is a commutative BCK-algebra, hence
$\overline{\mu}$ is also a left state operator on $X/\Ker(\mu)$.

(iii) Let $x,y\in X$. By (ii), $\Ker(\mu)$ is a commutative ideal of $X$ and so by \cite[Thm 2.5.6]{chang},
$X/\Ker(\mu)$ is a commutative BCK-algebra. Hence, $(x*(x*y))/\Ker(\mu)=(y*(y*x))/\Ker(\mu)$.
Similarly to the proof of (ii), we obtain that $\mu(x*(x*y))=\mu(y*(y*x))$ and so
$\mu(x)*\mu(x*(x*y))=\mu(x)*\mu(y*(y*x))=\mu(x*y)$. Therefore, $(X,\mu)$ is a left state BCK-algebra.
\end{proof}

\begin{cor}
Let $\mu:X\ra X$ be a map. Then $(X,\mu)$ is a right state BCK-algebra if and only if $(X,\mu)$ is a left state
BCK-algebra and $\Ker(\mu)$ is a commutative ideal of $X$.
\end{cor}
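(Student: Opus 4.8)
The plan is to read off the forward implication directly from Proposition \ref{3.3.1} and to prove the converse by showing that, under the stated hypotheses, the left form of axiom (S1) already forces its right form.

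First I would dispatch the easy direction. Assume $(X,\mu)$ is a right state BCK-algebra. Then Proposition \ref{3.3.1}(iii) immediately gives that $(X,\mu)$ is a left state BCK-algebra, while Proposition \ref{3.3.1}(ii) gives that $\Ker(\mu)$ is a commutative ideal of $X$. Nothing further is required.

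For the converse, suppose $(X,\mu)$ is a left state BCK-algebra with $\Ker(\mu)$ a commutative ideal. Conditions (S0) and (S2) are literally identical in the definitions of left and right state operators, so they hold automatically; the only thing to verify is the right-hand form of (S1), namely $\mu(x*y)=\mu(x)*\mu(y*(y*x))$ for all $x,y\in X$. Starting from the left form $\mu(x*y)=\mu(x)*\mu(x*(x*y))$, it therefore suffices to establish $\mu(x*(x*y))=\mu(y*(y*x))$. The key step is the observation that $\mu$ is constant on the congruence classes of $\theta_{\Ker(\mu)}$: if $x/\Ker(\mu)=y/\Ker(\mu)$, then $x*y,y*x\in\Ker(\mu)$, whence $\mu(x*y)=\mu(y*x)=0$, and Proposition \ref{3.2}(ii) yields $\mu(x)*\mu(y)\le\mu(x*y)=0$ and symmetrically $\mu(y)*\mu(x)=0$, so $\mu(x)=\mu(y)$. (This is exactly the computation already performed inside the proof of Proposition \ref{3.3.1}(ii).) Because $\Ker(\mu)$ is a commutative ideal, \cite[Thm 2.5.6]{chang} guarantees that $X/\Ker(\mu)$ is a commutative BCK-algebra, so $(x*(x*y))/\Ker(\mu)=(y*(y*x))/\Ker(\mu)$. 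Applying the constancy of $\mu$ on classes gives $\mu(x*(x*y))=\mu(y*(y*x))$, and substituting into the left form of (S1) produces the right form. Hence $(X,\mu)$ is a right state BCK-algebra.

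I expect no serious obstacle: the whole argument mirrors the proof of Proposition \ref{3.3.1}(iii) run in the opposite direction. The only genuinely load-bearing ingredients are that passing to a commutative-ideal quotient identifies $x*(x*y)$ with $y*(y*x)$, and that $\mu$ factors through that quotient, both of which are already available from the earlier results.
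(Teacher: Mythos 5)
Your proof is correct and follows essentially the same route as the paper: the forward direction cites Proposition \ref{3.3.1}, and the converse uses commutativity of $X/\Ker(\mu)$ together with the fact that $\mu$ is constant on $\theta_{\Ker(\mu)}$-classes to convert the left form of (S1) into the right form. The only difference is that you spell out the constancy argument explicitly, which the paper compresses into the phrase ``similar to the proof of Proposition \ref{3.3.1}(ii)''.
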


\begin{proof}
Suppose that $(X,\mu)$ is a right state BCK-algebra. Then by Proposition \ref{3.3.1}, $(X,\mu)$ is a left state and
$\Ker(\mu)$ is a commutative ideal. Conversely, let $(X,\mu)$ be a left state
BCK-algebra and let $\Ker(\mu)$ be a commutative ideal of $X$. Then for all $x,y\in X$, $(x*(x*y))/\Ker(\mu)=(y*(y*x))/\Ker(\mu),$ and similar to the proof
of Proposition \ref{3.3.1}(ii), we have $\mu(x*(x*y))=\mu(y*(y*x))$, hence $\mu$ is a right state operator.
\end{proof}

By Proposition \ref{3.3.1}(iii), every right state BCK-algebra is a left state BCK-algebra.
In the following example, we show that the converse statement is not true, in general.  We present a left state operator $\mu$ on a BCK-algebra $X$ which is not a right state operator because $\Ker(\mu)$ is not a commutative ideal of $X.$

\begin{exm}\label{counter}
Let $X=\{0,1,2,3\}$. Define a binary operation $*$ by the following
table:
\[\begin{array}{ll}
%\mbox{ Table 1}\\
\begin{tabular}{l|cccc} %\hline
 $*$ & 0 & 1 & 2 & 3 \\\hline
 0 & 0 & 0 & 0 & 0\\
 1 & 1 & 0 & 0 & 0 \\
 2 & 2 & 2 & 0 & 0\\
 3 & 3 & 3 & 3 & 0 \\%\hline
 \end{tabular}
\end{array}\]
Then $(X,*,0)$ is a positive implicative BCK-algebra $(P)B_{4-1-4}$ from \cite{BCK} which is a chain ($0\leq 1\leq
2\leq 3$).
Let $\mu:X\ra X$ be defined by $\mu(0)=\mu(1)=0$ and $\mu(2)=\mu(3)=2$. We
claim that $\mu$ is
a left state operator on $X$. Clearly, it is a well defined and order
preserving map.
Let $x,y\in X$.

(1) If $x\leq y$, then we have $\mu(x*y)=\mu(0)=0$ and
$\mu(x)*\mu(x*(x*y))=\mu(x)*\mu(x)=0$.

(2) If $y<x$, then by definition of $*$, $\mu(x*y)=\mu(x)$. Also,
$\mu(x)*\mu(x*(x*y))=\mu(x)*\mu(x*x)=\mu(x)*\mu(0)=\mu(x)$.

(3) It can be easily shown that $\mu(x)*\mu(y)=\mu(\mu(x)*\mu(y))$.

From (1)--(3), we conclude that $\mu$ is a left state operator on $X$. But $\Ker(\mu)$ is not a commutative ideal of $X$ because $2*3 \in \Ker(\mu),$ but $2*(3*(3*2)) = 2*(3*3) = 2*0=2 \notin \Ker(\mu).$   Hence, $\mu$ is not a right state operator on $X.$
\end{exm}

Let $X$ be a set, we denote by $\Id_X: X \to X$ the identity on $X.$ It also provides an example of a left state operator which is not necessarily a right state operator.

In each BCK-algebra $X$, $\Id_X$ is a left state operator.
In fact, $\Id_X(x)*\Id_X(x*(x*y))=x*(x*(x*y))=x*y$. On the other hand, $\Id_X$ is a right state operator iff $X$ is a commutative BCK-algebra. So it can be easily obtained that,
$X$ is a commutative BCK-algebra if and only if each left state operator on $X$ is a right state operator.

By Proposition \ref{3.3.1}, each right state BCK-algebra is a left state BCK-algebra.
So in the remainder of this paper, we will consider only left BCK-algebras. Moreover, we write simply
a state BCK-algebra instead of a left state BCK-algebra.

\begin{defn}\label{3.4}
Let $(X,\mu)$ be a state BCK-algebra. An ideal $I$ of a BCK-algebra
$X$ is called a {\it state ideal} if
$\mu(I)\s I$. If $T$ is a subset of $X$, then $\langle T\rangle_{s}$ is the least
state ideal of $X$ containing $T$. A state ideal $I$ is said to be a {\it maximal state ideal} if $\langle I\cup\{x\}\rangle_s = X$ for each $x \in X-I.$ We denote by ${\rm MaxS}(X,\mu)$ the set of all maximal state ideals of $(X,\mu).$

\end{defn}

\begin{prop}\label{3.4.1}
Let $I$ be a state ideal of a state BCK-algebra $(X,\mu)$ and $a\in X$. Then
$$\langle I\cup\{a\}\rangle_{s}=\{x\in X \mid (x*a^n)*\mu(a)^m\in I \mbox{ for some $m,n\in \mathbb{N}$}\}.$$
\end{prop}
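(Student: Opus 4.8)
The plan is to set
\[
J := \{x\in X \mid (x*a^n)*\mu(a)^m\in I \text{ for some } m,n\in\mathbb{N}\}
\]
and prove the claim by establishing two facts: (A) $J$ is a state ideal containing $I\cup\{a\}$, and (B) $J$ is contained in every state ideal that contains $I\cup\{a\}$. Since $\langle I\cup\{a\}\rangle_s$ is by definition the \emph{least} state ideal containing $I\cup\{a\}$, fact (A) gives $\langle I\cup\{a\}\rangle_s\subseteq J$ and fact (B) gives $J\subseteq\langle I\cup\{a\}\rangle_s$, so the two coincide.

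For (A), I would first use (BCK6) to note that the order in which the factors $a$ and $\mu(a)$ are stripped off is immaterial; hence $J$ is exactly the ordinary ideal $\langle I\cup\{a,\mu(a)\}\rangle$ furnished by Theorem \ref{2.2} with $S=\{a,\mu(a)\}$, and in particular $J$ is an ideal. A short direct check then shows $0,a,\mu(a)\in J$ (using $0*x=0$ and $\mu(a)*\mu(a)=0$) and $I\subseteq J$ (for $x\in I$ one has $(x*a^n)*\mu(a)^m\le x$, so it lies in $I$ because ideals are downward closed).

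The crux is verifying the state condition $\mu(J)\subseteq J$, and this is the main obstacle. Given $x\in J$ with $(x*a^n)*\mu(a)^m\in I$, I would apply the inequality of Proposition \ref{3.2}(ii) to the string consisting of $n$ copies of $a$ followed by $m$ copies of $\mu(a)$; since $\mu(\mu(a))=\mu(a)$ by Proposition \ref{3.2}(i), every factor on the left collapses to $\mu(a)$ and one obtains
\[
\mu(x)*\mu(a)^{n+m}\ \le\ \mu\big((x*a^n)*\mu(a)^m\big).
\]
Because $I$ is a state ideal, the right-hand side lies in $\mu(I)\subseteq I$, and downward closure of $I$ then forces $\mu(x)*\mu(a)^{n+m}\in I$, whence $\mu(x)\in J$ (taking the $a$-exponent to be $1$ and bounding $(\mu(x)*a)*\mu(a)^{n+m}\le\mu(x)*\mu(a)^{n+m}$ via (BCK7) if one insists on positive exponents). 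This step is delicate precisely because it forces all three ingredients to cooperate at once — the super-additivity of Proposition \ref{3.2}(ii), the idempotency $\mu\mu=\mu$, and both defining closure properties of a state ideal — and one must keep the left-associated products aligned with the hypotheses of Proposition \ref{3.2}(ii).

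Finally, for (B), let $K$ be any state ideal with $I\cup\{a\}\subseteq K$; then $\mu(a)\in K$ since $K$ is closed under $\mu$. For $x\in J$ we have $(x*a^n)*\mu(a)^m\in I\subseteq K$, and I would peel off the factors $\mu(a)$ and then the factors $a$ one at a time, each removal being a single application of the ideal condition ``$y*z\in K$ and $z\in K$ imply $y\in K$'' with $z\in\{a,\mu(a)\}\subseteq K$; after $m+n$ steps this yields $x\in K$. Hence $J\subseteq K$, which completes the argument.
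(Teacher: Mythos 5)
Your proof is correct, and its overall skeleton --- define the candidate set $J$, show it is a state ideal containing $I\cup\{a\}$, and show it sits inside every state ideal containing $I\cup\{a\}$ --- is the same as the paper's; in particular your treatment of the crux, closure of $J$ under $\mu$, obtained by applying Proposition \ref{3.2}(i)--(ii) to the string of $n$ copies of $a$ followed by $m$ copies of $\mu(a)$ and then using $\mu(I)\subseteq I$ together with downward closure of $I$, is exactly the paper's argument. Where you genuinely diverge is in verifying that $J$ is an ideal at all: the paper does this by hand, with a chain of inequalities based on (BCK9) and (BCK6) showing that $x,\,y*x\in J$ force $(y*a^{n+s})*\mu(a)^{m+t}\in I$, whereas you shortcut the computation by observing that (BCK6) lets one sort the stripped factors, so that $J$ coincides with the ordinary generated ideal $\langle I\cup\{a,\mu(a)\}\rangle$ described in Theorem \ref{2.2}, making ideal-hood automatic. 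Your route is shorter and avoids the error-prone inequality manipulation; the paper's is self-contained but purely computational. A further point in your favour: you explicitly handle the convention issue of whether the exponents $m,n$ may be taken to be zero, by bounding $(\mu(x)*a)*\mu(a)^{n+m}\le \mu(x)*\mu(a)^{n+m}$ via (BCK7), a point the paper silently glosses over when it concludes $\mu(x)\in A$ from $\mu(x)*\mu(a)^{n+m}\in I$.
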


\begin{proof}
Set $A=\{x\in X \mid (x*a^n)*\mu(a)^m\in I \mbox{ for some $m,n\in \mathbb{N}$}\}$.
Clearly, $I\cup \{a\}\s A$. Moreover, if $J$ is a state ideal
of $(X,\mu)$ containing $I$ and $a$, then by Theorem \ref{2.2}, $A\s J$.
It suffices to show that $A$ is a state ideal. Let $x,y*x\in A$.
Then there are $m,n,s,t\in \mathbb{N}$ such that $(x*a^n)*\mu(a)^m\in I$ and $((y*x)*a^s)*\mu(a)^t\in I$.
\begin{eqnarray*}
(((y*a^{n+s})*\mu(a)^{m+t})&*&((x*a^n)*\mu(a)^m))*(((y*x)*a^s)*\mu(a)^t)\\
&\leq& (((y*a^{n+s})*\mu(a)^{t})*(x*a^n))*(((y*x)*a^s)*\mu(a)^t), \mbox{ by (BCK9)}\\
&=& (((y*a^{n+s})*(x*a^n))*\mu(a)^{t})*(((y*x)*a^s)*\mu(a)^t), \mbox{ by (BCK6)}\\
&\leq& (((y*a^{s})*x)*\mu(a)^{t})*(((y*x)*a^s)*\mu(a)^t), \mbox{ by (BCK9)}\\
&=& (((y*x)*a^{s})*\mu(a)^{t})*(((y*x)*a^s)*\mu(a)^t), \mbox{ by (BCK6)}\\
&=&0\in I.
\end{eqnarray*}

Since $(x*a^n)*\mu(a)^m,((y*x)*a^s)*\mu(a)^t\in I$ and $I$ is an ideal of $X$, then we get $(y*a^{n+s})*\mu(a)^{m+t}\in I$ and so
$y\in A$. Hence, $A$ is an ideal. Now, let $x$ be an arbitrary element of $A$. Then there exist $m,n\in\mathbb{N}$ such that
$(x*a^n)*\mu(a)^m\in I$. Since $I$ is a state ideal, then $\mu((x*a^n)*\mu(a)^m)\in I$ and so by Proposition \ref{3.2}(ii),
$\mu(x)*\mu(a)^{n+m}=(\mu(x)*\mu(a)^n)*\mu(a)^m=(\mu(x)*\mu(a)^n)*\mu(\mu(a))^m\in I$. Thus, $\mu(x)\in A$. Therefore,
$A$ is a state ideal of $(X,\mu)$.
\end{proof}

Note that, if $(X,\mu)$ is a state BCK-algebra, then $\{0\}$ and $X$ are state ideals of $(X,\mu)$ and so by
Proposition \ref{3.4.1}, $J=\{x\in X \mid (x*a^n)*\mu(a)^m=0 \ \mbox{\rm
for some } m,n\in \mathbb{N}\} $
is a state ideal of $X$ for any $a\in X$. Similarly,  we can construct other state ideals of $(X,\mu)$.

\begin{cor}\label{3.4.2}
A state ideal $I$ of a state BCK-algebra $(X,\mu)$ is a maximal state ideal if and only if
$\{x\in X \mid (x*a^n)*\mu(a)^m\in I \mbox{ for some } m,n\in \mathbb{N}\}=X$ for all $a\in X-I$.
\end{cor}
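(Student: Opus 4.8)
The plan is to derive this as an immediate consequence of the definition of a maximal state ideal together with the explicit description of $\langle I\cup\{a\}\rangle_s$ furnished by Proposition \ref{3.4.1}. By Definition \ref{3.4}, the state ideal $I$ is maximal exactly when $\langle I\cup\{a\}\rangle_s = X$ holds for every $a\in X-I$. The whole content of the corollary is therefore to replace the abstractly defined generated state ideal $\langle I\cup\{a\}\rangle_s$ by the concrete set appearing in the statement.

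First I would fix an arbitrary $a\in X-I$. Since $I$ is by hypothesis a state ideal of $(X,\mu)$ and $a\in X$, the hypotheses of Proposition \ref{3.4.1} are satisfied, so that proposition gives the identity
$$
\langle I\cup\{a\}\rangle_s=\{x\in X \mid (x*a^n)*\mu(a)^m\in I \mbox{ for some } m,n\in \mathbb{N}\}.
$$
In particular, for this fixed $a$ the condition $\langle I\cup\{a\}\rangle_s = X$ is literally the same as the condition $\{x\in X \mid (x*a^n)*\mu(a)^m\in I \mbox{ for some } m,n\in \mathbb{N}\}=X$. Quantifying over all $a\in X-I$ and combining this equivalence with Definition \ref{3.4} yields both implications at once, proving the stated characterization.

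I do not expect any genuine obstacle here, since the substantive work has already been carried out in Proposition \ref{3.4.1}; the corollary is essentially a translation of the maximality condition through that set-theoretic identity. The only point that deserves a line of care is checking that Proposition \ref{3.4.1} is indeed applicable for each $a\in X-I$ (i.e.\ that $I$ remains a state ideal and $a$ is a legitimate element to adjoin), after which the equivalence follows by direct substitution rather than any further computation.
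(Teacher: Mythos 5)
Your proposal is correct and is exactly the paper's argument: the paper's proof consists of the single remark that the corollary is a straightforward consequence of Proposition \ref{3.4.1}, which is precisely your substitution of the explicit description of $\langle I\cup\{a\}\rangle_{s}$ into the definition of a maximal state ideal from Definition \ref{3.4}. Nothing further is needed.
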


\begin{proof}
The proof is a straightforward corollary of  Proposition \ref{3.4.1}.
\end{proof}

By \cite[Thm 3.7]{BZ}, we know that if $M$ is a maximal ideal of $X$, then $I\cap J\s M$ implies that $I\s M$ or $J\s M$
for all $I,J\in {\rm I}(X)$. In the next theorem, we show that if $M$ is a maximal state ideal of  a state
BCK-algebra $(X,\mu)$, then $I\cap J\s M$ implies that $I\s M$ or $J\s M$ for all state ideals $I$ and $J$ of $(X,\mu)$.

\begin{thm}
Let $M$ be a maximal state ideal of a state BCK-algebra $(X,\mu)$. For for all state ideals $I$ and $J$ of $(X,\mu)$, we have $I\cap J\s M$
implies that $I\s M$ or $J\s M.$
\end{thm}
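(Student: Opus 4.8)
The plan is to prove the dichotomy directly: assuming $I\not\subseteq M$, I will show $J\subseteq M$. So fix an element $a\in I\setminus M$ and let $b\in J$ be arbitrary; the goal reduces to proving $b\in M$. Throughout I work in the ordinary BCK-quotient $X/M$, which is legitimate since a state ideal is in particular an ideal, writing $\bar x$ for $x/M$; recall that $\bar x=\bar 0$ if and only if $x\in M$, that ideals are downward closed, and that $x*y\le x$ and $x*(x*y)\le y$ hold in every BCK-algebra (both follow from (BCK5)--(BCK6)).

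First I record two congruences modulo $M$. Since $I$ is a \emph{state} ideal and $a\in I$, also $\mu(a)\in I$. Now $b*(b*a)\le b\in J$ and $b*(b*a)\le a\in I$, so downward closure gives $b*(b*a)\in I\cap J\subseteq M$; hence $\bar b*(\bar b*\bar a)=\bar 0$, i.e. $\bar b\le\bar b*\bar a$, while $\bar b*\bar a\le\bar b$ always, so by antisymmetry of $\le$ (BCK3) we get $\bar b*\bar a=\bar b$. The identical argument applied to $\mu(a)\in I$ in place of $a$ yields $b*(b*\mu(a))\in I\cap J\subseteq M$ and therefore $\bar b*\overline{\mu(a)}=\bar b$. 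Iterating, $\bar b*\bar a^{\,n}=\bar b$ and $\bar b*\overline{\mu(a)}^{\,m}=\bar b$ for all $m,n\in\mathbb{N}$.

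Next I invoke maximality. Because $a\notin M$ and $M$ is a maximal state ideal, $\langle M\cup\{a\}\rangle_s=X$, so Proposition~\ref{3.4.1} (with the state ideal $M$ in the role of $I$) applies to the element $b\in X$: there are $m,n\in\mathbb{N}$ with $(b*a^{\,n})*\mu(a)^{\,m}\in M$. Reading this in $X/M$ and substituting the two computations above, $\bar 0=(\bar b*\bar a^{\,n})*\overline{\mu(a)}^{\,m}=\bar b*\overline{\mu(a)}^{\,m}=\bar b$, whence $b\in M$. Since $b\in J$ was arbitrary, $J\subseteq M$, which is the desired dichotomy.

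The one genuinely state-theoretic point, and the step I expect to be the crux, is the presence of the $\mu(a)$-powers in the generation formula of Proposition~\ref{3.4.1}: the naive argument for ordinary maximal ideals (as in \cite[Thm 3.7]{BZ}) produces only $b*a^{\,n}\in M$, and it is precisely the state-ideal closure $\mu(a)\in I$ that lets me also neutralise the $\overline{\mu(a)}^{\,m}$ factor, via $\bar b*\overline{\mu(a)}=\bar b$, and so collapse the entire expression to $\bar b$. Everything else is routine BCK bookkeeping: $x*y\le x$, $x*(x*y)\le y$, downward closure of ideals, and antisymmetry of $\le$.
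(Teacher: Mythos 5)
Your proof is correct, and it takes a genuinely different route from the paper's, although both pivot on the same key lemma, Proposition~\ref{3.4.1}. The paper argues by contradiction and symmetrically: it picks witnesses $x\in I\setminus M$ and $y\in J\setminus M$, applies Corollary~\ref{3.4.2} to get $(a*x^n)*\mu(x)^m=m_1\in M$ and $(a*y^s)*\mu(y)^t=m_2\in M$ for an arbitrary $a\in X$, manufactures the element $(a*m_1)*m_2$ and places it in $I\cap J$ via the generation criterion of Theorem~\ref{2.2} (using $x,\mu(x)\in I$ and $y,\mu(y)\in J$), and then uses $m_1,m_2\in M$ to force $a\in M$, contradicting $X\not\subseteq M$. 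You instead argue directly and one-sidedly: a single witness $a\in I\setminus M$, and for each $b\in J$ you put $b*(b*a)$ and $b*(b*\mu(a))$ into $I\cap J$ by mere downward closure of ideals (no appeal to Theorem~\ref{2.2}), then pass to the quotient $X/M$ where these memberships become the absorption identities $\bar b*\bar a=\bar b$ and $\bar b*\overline{\mu(a)}=\bar b$, which collapse the generation formula $(b*a^n)*\mu(a)^m\in M$ to $\bar b=\bar 0$. Two things your version buys: first, it is visibly a proof of the clean implication ``$I\not\subseteq M$ implies $J\subseteq M$'' with no contradiction bookkeeping; second, and more substantively, you never use that $J$ is a \emph{state} ideal---only that it is an ideal---so your argument actually proves the slightly stronger statement that a maximal state ideal $M$ is prime with respect to pairs consisting of one state ideal and one ordinary ideal. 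The paper's proof, by contrast, needs the state-ideal property of both $I$ and $J$, but it stays entirely at the level of element computations in $X$ without invoking the quotient algebra.
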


\begin{proof}
Let $I$ and $J$ be two state ideals of $(X,\mu)$ such that $I\cap J\s M$. Suppose that
there are $x\in I-M$ and $y\in J-M$. Then by Corollary \ref{3.4.2},
$X=\langle M\cup\{x\}\rangle_{s}=\langle M\cup\{y\}\rangle_{s}$. On the other hand,
if $a\in \langle M\cup\{x\}\rangle_{s}\cap \langle M\cup\{y\}\rangle_{s}$, then by Proposition \ref{3.4.1},
there exist $m,n,s,t\in\mathbb{N}$ such that $(a*x^n)*\mu(x)^m=m_1\in M$ and $(a*y^s)*\mu(y)^t=m_2\in M$
and so by (BCK2), (BCK4), (BCK6) and Proposition \ref{3.4.1}, we get $(a*m_1)*m_2\in I\cap J\s M$
(since $I$ and $J$ are  state ideals, $x\in I$, $y\in J$ and $(((a*m_1)*m_2)*x^n)*\mu(x)^m=0*m_2=0$,
$(((a*m_1)*m_2)*y^s)*\mu(y)^t=0*m_1=0$). Since $m_1,m_2\in M$ and $M$ is an ideal of $X$, then
we have $a\in M$ and so $X=\langle M\cup\{x\}\rangle_{s}\cap \langle M\cup\{y\}\rangle_{s}\s M$, which
is a contradiction. Therefore, $I\s M$ or $J\s M$.
\end{proof}

In Theorem \ref{con-ideal}, we  show a one-to-one relationship between congruence relations of a state BCK-algebra
$(X,\mu)$ and state ideals of $(X,\mu)$. We denote by ${\rm SI}(X)$ and $\Con(X,\mu)$ the set of state ideals and the set of congruences, respectively, on a state BCK-algebra $(X,\mu).$

\begin{thm}\label{con-ideal}
 Let $(X,\mu)$ be a state BCK-algebra.
 \begin{itemize}
  \item[{\rm (i)}] If $\theta$ is a congruence relation of $(X,\mu)$, then
  $[0]_{\theta}=\{x\in X \mid (x,0)\in \theta\}$ is a state ideal of $(X,\mu).$
  \item[{\rm (ii)}] If $I$ is a state ideal of $(X,\mu)$, then $\theta_{I}=\{(x,y)\in X\times X \mid  x*y,y*x\in I\}$
  is a congruence relation on $(X,\mu).$
  \item[{\rm (iii)}] There is a bijection between the set of all congruence relations of $(X,\mu)$, ${\rm Con}(X,\mu)$, and
  the set ${\rm SI}(X,\mu)$ of all state ideals of $(X,\mu)$.
 \end{itemize}
\end{thm}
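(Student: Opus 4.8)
The plan is to reduce everything to the known correspondence for plain BCK-algebras recorded in Theorem \ref{con BCK} and to verify only the two extra conditions that actually involve the operator $\mu$: that a $\mu$-compatible congruence collapses $\mu(0)$ to $0$, and that an ideal stable under $\mu$ induces a $\mu$-compatible relation. Throughout I regard a congruence of $(X,\mu)$ as a BCK-congruence $\theta$ of $X$ with the additional property that $(x,y)\in\theta$ implies $(\mu(x),\mu(y))\in\theta$. I would define $\Phi:\Con(X,\mu)\ra \mathrm{SI}(X,\mu)$ by $\Phi(\theta)=[0]_\theta$ and $\Psi:\mathrm{SI}(X,\mu)\ra\Con(X,\mu)$ by $\Psi(I)=\theta_I$; then (i) and (ii) assert that these maps are well defined, while (iii) asserts that they are mutually inverse.

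For (i), Theorem \ref{con BCK} already gives that $[0]_\theta$ is an ideal of the BCK-algebra $X$, so only $\mu([0]_\theta)\s[0]_\theta$ remains. If $x\in[0]_\theta$, i.e. $(x,0)\in\theta$, then $\mu$-compatibility of $\theta$ yields $(\mu(x),\mu(0))\in\theta$, and since $\mu(0)=0$ by Proposition \ref{3.2}(i), we get $(\mu(x),0)\in\theta$, that is $\mu(x)\in[0]_\theta$. Hence $[0]_\theta$ is a state ideal.

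For (ii), the relation $\theta_I$ is already a BCK-congruence by the standard fact recalled in the preliminaries, so the only thing to check is $\mu$-compatibility. Suppose $(x,y)\in\theta_I$, i.e. $x*y,y*x\in I$. By Proposition \ref{3.2}(ii) we have $\mu(x)*\mu(y)\le\mu(x*y)$; since $I$ is a \emph{state} ideal, $\mu(x*y)\in I$, and because ideals are downward closed (if $a\le b$ then $a*b=0\in I$, so $b\in I$ forces $a\in I$ by the ideal axiom), it follows that $\mu(x)*\mu(y)\in I$. Symmetrically $\mu(y)*\mu(x)\le\mu(y*x)\in I$ gives $\mu(y)*\mu(x)\in I$, whence $(\mu(x),\mu(y))\in\theta_I$. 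Thus $\theta_I\in\Con(X,\mu)$.

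Finally, for (iii) I would show $\Phi$ and $\Psi$ are inverse to one another. One composite is handed to us: Theorem \ref{con BCK} states $\theta_{[0]_\theta}=\theta$ for every congruence $\theta$, so $\Psi\circ\Phi=\mathrm{Id}$. For the other composite, a direct computation gives $[0]_{\theta_I}=\{x\mid x*0,0*x\in I\}=\{x\mid x\in I\}=I$, using $x*0=x$ (BCK2) and $0*x=0\in I$ (BCK4); hence $\Phi\circ\Psi=\mathrm{Id}$. I do not expect a serious obstacle: the content is essentially the BCK correspondence of Theorem \ref{con BCK} together with the observation that ``state ideal'' is precisely the ideal-side translation of ``$\mu$-compatible congruence.'' The one place that demands care is part (ii), where the inequality $\mu(x)*\mu(y)\le\mu(x*y)$ of Proposition \ref{3.2}(ii) must be combined with downward-closure of ideals in order to transport $\mu$-compatibility from the congruence picture to the ideal picture.
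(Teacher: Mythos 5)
Your proposal is correct and follows essentially the same route as the paper: part (i) via Theorem \ref{con BCK} plus $\mu$-compatibility and $\mu(0)=0$, part (ii) via the inequality $\mu(x)*\mu(y)\le\mu(x*y)$ of Proposition \ref{3.2}(ii) together with downward closure of ideals, and part (iii) via the mutually inverse maps $\theta\mapsto[0]_\theta$ and $I\mapsto\theta_I$. The only difference is that you spell out the verification $[0]_{\theta_I}=I$ (using (BCK2) and (BCK4)) and $\theta_{[0]_\theta}=\theta$, which the paper dismisses as "easily shown."
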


\begin{proof}
(i) Let $\theta$ be a congruence relation of $(X,\mu)$.
Then by Theorem \ref{con BCK}, $[0]_{\theta}$ is an ideal of $X$. It suffices to show that $[0]_{\theta}$ is a state ideal. Let $x\in [0]_{\theta}$. Then $(x,0)\in \theta$. Since $\theta$ is a congruence relation of $(X,\mu)$, then $(\mu(x),\mu(0))\in\theta$ and so by Proposition \ref{3.2}(i), $(\mu(x),0)\in \theta$. Hence, $\mu(x)\in [0]_{\theta}$. That is, $[0]_{\theta}$ is a state ideal.

(ii) Let $I$ be a state ideal of $X$. Then $\theta_{I}$ is a congruence relation on a BCK-algebra $X$. Let
$(x,y)\in \theta_{I}$. Then $x*y,y*x\in I$ and so by Proposition \ref{3.2}(ii),
$\mu(x)*\mu(y)\leq\mu(x*y)\in I$. Thus, $\mu(x)*\mu(y)\in I$. In a similar way, $\mu(y)*\mu(x)\in I$, hence $(\mu(x),\mu(y))\in \theta_{I}$, so $\theta_{I}$ is a congruence relation of $(X,\mu)$.

(iii) We define a map $f:{\rm SI}(X,\mu)\ra {\rm Con}(X,\mu)$  by $f(I)=\theta_{I}$. Then it can be easily shown that $f$ is a bijection map and its inverse is the map  $g:{\rm Con}(X,\mu)\ra {\rm SI}(X,\mu)$, which is defined by $g(\theta)=[0]_{\theta}$.
\end{proof}

\begin{defn}\cite{Burris}
An algebra $A$ of type $F$ is a {\it subdirect product} of an indexed family $\{A_{i}\}_{i\in I}$ of algebras of type $F$ if
\begin{itemize}
\item $A$ is a subalgebra of $\Pi_{i\in I} A_{i}$,
\item $\pi_{i}(A)=A_{i}$ for any $i\in I$, where $\pi_i:\Pi_{i\in I} A_{i}\ra A_i$ is a natural projection map.
\end{itemize}
A  one-to-one homomorphism $\alpha : A\ra \Pi_{i\in I} A_{i}$ is called a {\it subdirect embedding}
if $\alpha (A)$ is a
subdirect product of the family $\{A_{i}\}_{i\in I}$. An algebra $A$ of type $F$ is
called {\it subdirectly irreducible} if, for every subdirect embedding $\alpha : A\ra \Pi_{i\in I} A_{i}$, there exists
$i\in I$ such that $\pi_{i}\circ\alpha: A\ra A_{i}$ is an isomorphism.
\end{defn}

\begin{rmk}\label{blyth1}
If $I$ and $J$ are two ideals of $X$ such that $I\s J$, then clearly, $\theta_I\s \theta_J$.
Let $(X,\mu)$ be subdirectly irreducible. Then by \cite[Thm II.8.4]{Burris}, the set ${\rm Con}(X,\mu)-\Delta$ has a least element, where $\Delta=\{(x,x) \mid x\in X\}$ and $\nabla=X\times X$.
Suppose that $\theta$ is the least element of ${\rm Con}(X,\mu)-\Delta$.
Then by Theorem \ref{con-ideal}, there exists
a state ideal of $(X,\mu)$ such that $\theta=\theta_{I}$ (so $I$ is a non-zero ideal of $X$). It follows that
$I$ is the least non-zero state ideal of $(X,\mu)$. By Theorem \ref{con-ideal} and
\cite[Thm II.8.4]{Burris},  we conclude that $(X,\mu)$ is subdirectly irreducible if and only if
${\rm SI}(X,\mu)-\{0\}$ has  the least element.
\end{rmk}

In Theorem \ref{irr} and Theorem \ref{characterize}, we present  characterizations of subdirectly irreducible state BCK-algebras.
First, we show that if $(X,\mu)$ is subdirectly irreducible, then the conditions (i) or (ii) of Theorem \ref{irr} hold. Then we prove that
if $(X,\mu)$ satisfies the condition (i) or (ii) in Theorem \ref{irr}, then $(X,\mu)$ must be subdirectly irreducible. We note that in the next theorem, we take an element $a$ in the subalgebra $\mu(X)$ of a BCK-algebra $X,$ therefore, $\langle a\rangle_X$ will denote the ideal of $X$ generated by the element $a.$

\begin{thm}\label{irr}
Let $(X,\mu)$ be a subdirectly irreducible state BCK-algebra.
\begin{itemize}
\item[{\rm (i)}] If $\Ker(\mu)=\{0\},$  then $\mu(X)$ is a subdirectly irreducible subalgebra of $X$.
\item[{\rm (ii)}] If $\Ker(\mu)\neq \{0\}$, then $\Ker(\mu)$ is a subdirectly irreducible subalgebra of $X$
and $\Ker(\mu)\cap \langle a\rangle_X \neq \{0\}$ for each non-zero element $a$ of $\mu(X)$.
\end{itemize}
\end{thm}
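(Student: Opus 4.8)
The plan is to reduce everything to the ideal-theoretic criterion for subdirect irreducibility. By Remark \ref{blyth1}, since $(X,\mu)$ is subdirectly irreducible, the set $\mathrm{SI}(X,\mu)\setminus\{0\}$ has a least element; call it $I_0$. By Theorem \ref{con BCK}, the same Birkhoff criterion \cite[Thm II.8.4]{Burris} applied to a plain BCK-algebra says that such an algebra is subdirectly irreducible exactly when its family of nonzero ideals has a least member. So in each case it suffices to exhibit a least nonzero ideal of the subalgebra in question and identify it with a trace of $I_0$.

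For (i), I would first verify that $\mu^{-1}(K)$ is a nonzero state ideal of $X$ for every nonzero ideal $K$ of $\mu(X)$: it is an ideal because $\mu(y)*\mu(x)\le\mu(y*x)$ (Proposition \ref{3.2}(ii)) together with the downward closure of ideals lets one run the ideal test; it is a state ideal because $\mu(\mu(x))=\mu(x)$ (Proposition \ref{3.2}(i)); and it is nonzero because any $0\neq k\in K\subseteq\mu(X)$ satisfies $\mu(k)=k$, so $k\in\mu^{-1}(K)$. Consequently $I_0\subseteq\mu^{-1}(K)$, hence $\mu(I_0)\subseteq K$. I then claim $I_0\cap\mu(X)$ is the least nonzero ideal of $\mu(X)$: it is an ideal of $\mu(X)$ (the intersection of an ideal of $X$ with a subalgebra), it is nonzero because for $0\neq x\in I_0$ one has $\mu(x)\in I_0\cap\mu(X)$ with $\mu(x)\neq 0$ (here $\Ker(\mu)=\{0\}$ is used), and every element of $I_0\cap\mu(X)$ is fixed by $\mu$ and lies in $\mu(I_0)\subseteq K$, so $I_0\cap\mu(X)\subseteq K$. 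This produces the required least nonzero ideal, hence the subdirect irreducibility of $\mu(X)$.

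For (ii), I note that $\Ker(\mu)$ is itself a state ideal (indeed $\mu(\Ker(\mu))=\{0\}$), nonzero by hypothesis, so $I_0\subseteq\Ker(\mu)$; moreover $\Ker(\mu)$ is a subalgebra since every ideal is a subalgebra. The key step is to show that each nonzero ideal $K$ of the BCK-algebra $\Ker(\mu)$ is already an ideal of $X$: using Theorem \ref{2.2} and Proposition \ref{3.2}(ii), any $x$ with $(\cdots(x*a_1)*\cdots)*a_n=0$ and $a_i\in K\subseteq\Ker(\mu)$ satisfies $\mu(x)=0$, so $\langle K\rangle_X\subseteq\Ker(\mu)$; then a downward induction on the defining product (using that $K$ is an ideal of $\Ker(\mu)$ and that $\Ker(\mu)$ is closed under $*$) shows $\langle K\rangle_X=K$. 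Thus $K$ is a nonzero state ideal of $X$, whence $I_0\subseteq K$, and $I_0$ is the least nonzero ideal of $\Ker(\mu)$, proving $\Ker(\mu)$ is subdirectly irreducible.

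For the final assertion, fix a nonzero $a\in\mu(X)$, so $\mu(a)=a$. By Theorem \ref{2.2}, $\langle a\rangle_X=\{x\in X:x*a^n=0 \text{ for some } n\}$, and Proposition \ref{3.2}(ii) together with $\mu(a)=a$ gives $\mu(x)*a^n=0$ whenever $x*a^n=0$; hence $\langle a\rangle_X$ is a nonzero state ideal. Therefore $I_0\subseteq\langle a\rangle_X$, and since also $I_0\subseteq\Ker(\mu)$, we obtain $\{0\}\neq I_0\subseteq\Ker(\mu)\cap\langle a\rangle_X$, as required. I expect the main obstacle to be the identification $\langle K\rangle_X=K$ in (ii)—that is, proving the subalgebra $\Ker(\mu)$ sees exactly the same ideals below it as $X$ does—while the remaining work is the routine verification that the candidate objects $\mu^{-1}(K)$, $\langle K\rangle_X$, and $\langle a\rangle_X$ are (state) ideals.
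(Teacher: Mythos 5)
Your proof is correct, and its skeleton coincides with the paper's: both reduce subdirect irreducibility to the existence of a least non-zero (state) ideal via Remark \ref{blyth1} and Theorem \ref{con BCK}, identify $I_0\cap\mu(X)$ (resp.\ $I_0$ itself) as the least non-zero ideal of $\mu(X)$ (resp.\ of $\Ker(\mu)$), and handle the final assertion by showing $\langle a\rangle_X$ is a non-zero state ideal, exactly as the paper does. The differences lie in the auxiliary constructions. In (i) the paper pushes an ideal $J$ of $\mu(X)$ \emph{forward} to the generated ideal $\langle J\rangle_X$, checks via Theorem \ref{2.2} and Proposition \ref{3.2} that $\langle J\rangle_X$ is a state ideal, and uses $J=\langle J\rangle_X\cap\mu(X)$; you instead pull $K$ \emph{back} to $\mu^{-1}(K)$, which is arguably cleaner since verifying that $\mu^{-1}(K)$ is a non-zero state ideal needs only Proposition \ref{3.2} and the idempotence $\mu\circ\mu=\mu$, avoiding Theorem \ref{2.2} and the identification $J=\langle J\rangle_X\cap\mu(X)$ altogether. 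In (ii) the situation reverses: the paper verifies in one step that any non-zero ideal $J$ of $\Ker(\mu)$ is already an ideal of $X$ (from $y*x,x\in J$ one gets $\mu(y)\leq\mu(y*x)=0$, so $y\in\Ker(\mu)$ and hence $y\in J$), whereas your route through $\langle K\rangle_X\subseteq\Ker(\mu)$ and a downward induction on the defining product proves the same key fact but with more machinery; your induction is sound (the partial products stay in $\Ker(\mu)$ because $\Ker(\mu)$ is a subalgebra), just longer than necessary. Net effect: you traded a simpler (i) for a heavier (ii); both versions are complete proofs.
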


\begin{proof}
(i) Let $(X,\mu)$ be subdirectly irreducible and $\Ker(\mu)=\{0\}$. By Remark \ref{blyth1}, the set of
all non-zero state ideals of $(X,\mu)$ has the least element, $I$ say.
If $I\cap \mu(X)=\{0\}$, then by $\mu(I)\s I\cap \mu(X)$ (since $I$ is a state ideal), we conclude that
$\mu(x)=0$ for all $x\in I$. Thus, $I\s \Ker(\mu)=\{0\}$, which is a contradiction. So, $I\cap\mu(X)\neq\{0\}$.
Now, we show that $I\cap \mu(X)$ is the least non-zero ideal of $\mu(X)$.
Suppose that $J$ is an ideal of $\mu(X)$.

(1) Let $\langle J\rangle_X$ be the ideal of $X$ generated by $J,$ and choose an arbitrary element $x\in \langle J\rangle_X$.
Then by Theorem \ref{2.2}, there exist $b_1,\ldots ,b_n\in J$ such that
$(\cdots ((x*b_1)*b_2)* \cdots)*b_n=0$ and so by Proposition \ref{3.2}(i) and (ii), we get
$$(\cdots ((\mu(x)*\mu(b_1))*\mu(b_2))*\cdots)*\mu(b_n)\leq \mu((\cdots((x*b_1)*b_2)*\cdots)*b_n)=0.$$
Since $\mu^{2}=\mu$ and
$b_1,\ldots,b_n\in J\s \mu(X)$, we get $(\cdots ((\mu(x)*b_1)*b_2)*\cdots )*b_n=0$, hence $\mu(x)\in J$. Thus,
$\langle J\rangle_X$ is a state ideal of $(X,\mu)$.

(2) Clearly, $J=\langle J\rangle_{X} \cap \mu(X)$.

By (1), we get that $I\s \langle J\rangle_X$ and so by (2), $I\cap \mu(X)\s \langle J\rangle_{X} \cap \mu(X)=J$.
Hence, $I\cap\mu(X)$ is the least non-zero ideal of $\mu(X)$. Therefore, by \cite[Thm II.8.4]{Burris},
we conclude that $\mu(X)$ is a subdirectly irreducible subalgebra of $X$.

(ii) Let $\mu(X) \ne \{0\}.$ Again, let $I$ be the least non-zero state ideal of the subdirectly irreducible state BCK-algebra $(X,\mu).$ Since $X$ is a BCK-algebra, then every ideal of $X,$ in particular $\Ker(\mu),$ is a subalgebra of $X$.
Clearly, $\Ker(\mu)$ is a state ideal of $(X,\mu)$ and so $I\s \Ker(\mu)$. We show that
$I$ is the least non-zero ideal of $\Ker(\mu)$. Let $J$ be a non-zero ideal of $\Ker(\mu)$.
Then $\mu(J)\s \mu(\Ker(\mu))=\{0\}\s J$. For any $x,y\in X$, if $y*x,x\in J$,
then by Proposition \ref{3.2}(ii),
$0=\mu(y*x)\geq \mu(y)*\mu(x)=\mu(y)*0=\mu(y)$. Thus, $y\in \Ker(\mu)$, so $y\in J$ (since $J$ is an ideal  of $\Ker(\mu)$). It follows that $J$ is a state ideal of $X$ and so $I\s J$.
Hence by \cite[Thm II.8.4]{Burris}, $\Ker(\mu)$ is subdirectly irreducible.

Now, let $a$ be a non-zero element
of $\mu(X)$ and let $\langle a\rangle_X$ be the ideal generated by $a$ in $X$. Then $a=\mu(a).$  Take an arbitrary element $u\in \langle a\rangle_X$. By Theorem \ref{2.2}, there exists $n\in\mathbb{N}$ such that $0=u*a^n,$ and by by Proposition \ref{3.2}(ii),  $0=\mu(0)=\mu(u*a^n)=\mu(x)*(\mu(a))^n=\mu(u)*a^n.$
Thus, $\mu(u)\in \langle a\rangle_X$ and $\mu(\langle a\rangle_X)\s \langle a\rangle_X$. This implies, $\langle a\rangle _X$ is a non-zero state interval of $(X,\mu)$ and, consequently, $I\subseteq \langle a\rangle _X.$ Since also $I \subseteq \Ker(\mu),$ we have $\{0\}\ne I \subseteq \Ker(\mu) \cap \langle a\rangle_X.$
\end{proof}

\begin{thm}\label{characterize}
Let $(X,\mu)$ be a state BCK-algebra. If it satisfies the condition {\rm (i)} or {\rm (ii)} in Theorem {\rm \ref{irr}}, then $(X,\mu)$ is subdirectly irreducible.
\end{thm}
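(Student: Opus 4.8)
The plan is to prove the converse of Theorem \ref{irr}, namely that each of conditions (i) and (ii) forces $(X,\mu)$ to be subdirectly irreducible. By Remark \ref{blyth1}, it suffices to show that the poset ${\rm SI}(X,\mu)-\{0\}$ of non-zero state ideals has a least element. So in each case I would produce an explicit candidate for this least non-zero state ideal and verify that every non-zero state ideal contains it.

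Consider first the case (i), where $\Ker(\mu)=\{0\}$ and $\mu(X)$ is assumed subdirectly irreducible as a subalgebra of $X$. By Remark \ref{blyth1} applied inside $\mu(X)$, the BCK-algebra $\mu(X)$ has a least non-zero ideal, call it $K$. The natural candidate for the least non-zero state ideal of $(X,\mu)$ is $\langle K\rangle_X$, the ideal of $X$ generated by $K$. First I would check $\langle K\rangle_X$ is a state ideal: this reuses the argument of step (1) in the proof of Theorem \ref{irr}(i) almost verbatim --- for $x\in\langle K\rangle_X$ one has $(\cdots((x*b_1)*b_2)*\cdots)*b_n=0$ with $b_i\in K\s\mu(X)$, and applying $\mu$ together with Proposition \ref{3.2}(i),(ii) and $\mu^2=\mu$ gives $(\cdots((\mu(x)*b_1)*\cdots)*b_n=0$, so $\mu(x)\in K\s\langle K\rangle_X$. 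Then I would show it is least: given any non-zero state ideal $I$, the key point (as in the forward direction) is that $I\cap\mu(X)\ne\{0\}$, because $I\cap\mu(X)=\{0\}$ would force $\mu(I)\s I\cap\mu(X)=\{0\}$, i.e. $I\s\Ker(\mu)=\{0\}$, a contradiction. Thus $I\cap\mu(X)$ is a non-zero ideal of $\mu(X)$, so $K\s I\cap\mu(X)\s I$, whence $\langle K\rangle_X\s I$.

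For case (ii), where $\Ker(\mu)\ne\{0\}$, it is subdirectly irreducible as a subalgebra, and $\Ker(\mu)\cap\langle a\rangle_X\ne\{0\}$ for every non-zero $a\in\mu(X)$: here I would take as candidate the least non-zero \emph{ideal} $L$ of the subdirectly irreducible BCK-algebra $\Ker(\mu)$ (which exists by Remark \ref{blyth1}). Since $\mu(L)\s\mu(\Ker(\mu))=\{0\}\s L$, the set $L$ is automatically closed under $\mu$, so $L$ is itself a state ideal of $(X,\mu)$, provided it is an ideal of $X$ and not merely of $\Ker(\mu)$; this last point is exactly the implication proved in Theorem \ref{irr}(ii), that any ideal $J$ of $\Ker(\mu)$ is in fact a state ideal of $X$ (if $y*x,x\in J$ then $\mu(y)\le\mu(y*x)=0$ forces $y\in\Ker(\mu)$, hence $y\in J$). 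Now take an arbitrary non-zero state ideal $I$; I must show $L\s I$. Pick any non-zero $a$; if $I\cap\Ker(\mu)\ne\{0\}$ then $I\cap\Ker(\mu)$ is a non-zero ideal of $\Ker(\mu)$ and so contains $L$, giving $L\s I$. The remaining subcase is $I\cap\Ker(\mu)=\{0\}$, and this is where hypothesis (ii) does the work: I would argue this subcase is impossible. Indeed, since $I\ne\{0\}$ pick $0\ne x\in I$; then $\mu(x)\in I\cap\mu(X)$, and if $\mu(x)\ne 0$ the hypothesis gives $\{0\}\ne\Ker(\mu)\cap\langle\mu(x)\rangle_X\s\Ker(\mu)\cap I$ (using that $I$ is an ideal containing $\mu(x)$), contradicting $I\cap\Ker(\mu)=\{0\}$; and if $\mu(x)=0$ then $x\in\Ker(\mu)$, so $0\ne x\in I\cap\Ker(\mu)$, again a contradiction.

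The main obstacle I expect is the bookkeeping in case (ii): one must carefully track the distinction between ideals of the subalgebra $\Ker(\mu)$ and ideals of the ambient $X$, and ensure that the least ideal $L$ of $\Ker(\mu)$ really is a state ideal of $X$ rather than only of $\Ker(\mu)$ --- this rests on the non-trivial implication (re-used from Theorem \ref{irr}(ii)) that ideals of $\Ker(\mu)$ are state ideals of $X$. The genuinely delicate step is eliminating the subcase $I\cap\Ker(\mu)=\{0\}$: it is precisely here that the condition $\Ker(\mu)\cap\langle a\rangle_X\ne\{0\}$ for all non-zero $a\in\mu(X)$ is indispensable, and one must apply it to the element $\mu(x)$ for a well-chosen $0\ne x\in I$ while handling separately the degenerate possibility $\mu(x)=0$. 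Once both candidates $\langle K\rangle_X$ and $L$ are shown to be least non-zero state ideals, Remark \ref{blyth1} immediately yields subdirect irreducibility and the proof is complete.
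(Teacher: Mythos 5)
Your proposal is correct and takes essentially the same approach as the paper: via Remark \ref{blyth1} it exhibits a least non-zero state ideal, obtained in case (i) from the least non-zero ideal $K$ of $\mu(X)$ (using that every non-zero state ideal meets $\mu(X)$ non-trivially, since $\Ker(\mu)=\{0\}$), and in case (ii) from the least non-zero ideal of $\Ker(\mu)$, which is a state ideal of $X$ by the argument recycled from Theorem \ref{irr}(ii), with the hypothesis $\Ker(\mu)\cap\langle a\rangle_X\neq\{0\}$ handling state ideals not contained in $\Ker(\mu)$. If anything, your case (i) is slightly more careful than the paper's, which asserts that the intersection of all non-zero ideals of $X$ is non-zero, while the argument really only supports (and only needs) this for state ideals.
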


\begin{proof}
First, we assume that $\Ker(\mu)=\{0\}$ and $\mu(X)$ is a subdirectly irreducible subalgebra of $X$.
Since $\mu(X)$ is subdirectly irreducible, then by \cite[Thm II.8.4]{Burris},
$\bigcap({\rm I}(\mu(X))-\{0\})$ is a non-zero ideal of $\mu(X)$. From ${\rm I}(\mu(X))=\{I\cap \mu(X) \mid I\in {\rm I}(X)\}$,
it follows that  $\bigcap(\{I\cap \mu(X)\mid I\in {\rm I}(X)\}-\{0\})$ is non-zero, so $\bigcap({\rm I}(X)-\{0\})\neq\{0\}$.
Hence, the intersection of all non-zero state ideals of $(X,\mu)$ is a non-zero ideal of $X$ (clearly it is a
state ideal), whence by Remark \ref{blyth1}, $(X,\mu)$ is subdirectly irreducible.

Now, let $\Ker(\mu)\neq \{0\}$ and let $\Ker(\mu)$ be a subdirectly irreducible subalgebra of $X$.
Let $I$ be the least non-zero ideal of $\Ker(\mu).$
Clearly, $I$ is a state ideal (since $\mu(I)=\{0\}$). We claim, for any non-zero state ideal $H$ of $(X,\mu),$ we have $I\s H.$ Suppose that $H$ is a non-zero state ideal of $(X,\mu)$.
Then $\mu(H)\s H$. If $\mu(H)=\{0\}$, then $H\s \Ker(\mu)$ and so $I\s H$. Otherwise, there exists
$a\in \mu(H)-\{0\}$. It follows that $\{0\}\neq \Ker(\mu)\cap $ $\langle a \rangle_X \subseteq \Ker(\mu)\cap H$ and so
$I\s H\cap\Ker(\mu)\s H$. Thus, $I$ is the least non-zero
state ideal of $(X,\mu)$. Therefore, $(X,\mu)$ is subdirectly irreducible.
\end{proof}

In the final theorem of this section, we  find a relation between state operators in
BCK-algebras and MV-algebras. It is well known, if $(X,*,0)$ is a bounded commutative BCK-algebra,
then $(X,\oplus,',0)$ is an MV-algebra,
where $x\oplus y=N(Nx*y)$ and $x'=Nx$ for all $x,y\in X$ (see \cite{Mun}).
Note that in each bounded BCK-algebra
$X$, we have $N(Nx)=x$.

\begin{thm}\label{3.8}
Let $(X,*,0)$ be a bounded commutative BCK-algebra and $\mu$ be a left state BCK operator on $X$
such that $\mu(1)=1$. Then $(X,\mu)$ is a state MV-algebra. The converse is also true.
\end{thm}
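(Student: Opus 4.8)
The plan is to pass everything through the dictionary between the BCK operation $*$ and the MV operations induced by it, and then check the two implications axiom by axiom. Recall from the remark preceding the statement that $Nx=1*x=x'$ and $x\oplus y=N(Nx*y)=1*((1*x)*y)$; I will also use $x\ominus y=x*y$, $x\wedge y=x*(x*y)$ (the semilattice meet of the commutative BCK-algebra), $x\odot y=x*(1*y)$, and $N(Nx)=x$. The single MV identity that drives axiom (3) in both directions is
\[
y\ominus(x\odot y)=y\wedge x'=(1*x)\wedge y,
\]
which holds because $y\ominus(x\odot y)=y\odot(x\odot y)'=y\odot(x'\oplus y')=y\wedge x'$.

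For the forward direction I assume $(X,\mu)$ is a state BCK-algebra with $\mu(1)=1$ and verify the four state MV-axioms. Axiom (1) is the hypothesis. For axiom (2) I apply (S1) to $1*x$: since $1*(1*x)=x$ by $N(Nx)=x$ and $\mu(1)=1$, I get $\mu(1*x)=\mu(1)*\mu(1*(1*x))=1*\mu(x)$, that is $\mu(x')=\mu(x)'$. For axiom (3) I rewrite both sides. Using axiom (2) on $(1*x)*y$, then (S1) with $u=1*x,\ v=y$ together with $(1*x)*((1*x)*y)=(1*x)\wedge y$ and axiom (2) on $1*x$, the left side becomes
\[
\mu(x\oplus y)=1*\big((1*\mu(x))*\mu((1*x)\wedge y)\big);
\]
by the displayed MV identity and the definition of $\oplus$, the right side $\mu(x)\oplus\mu\big(y\ominus(x\odot y)\big)$ equals exactly this expression. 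For axiom (4) I observe that the fixed-point set $\mu(X)$ (recall $\mu\mu=\mu$ by Proposition \ref{3.2}(i)) is closed under $'$ (by axiom (2)) and under $*$ (by (S2)), hence under $\oplus$; so $\mu(x)\oplus\mu(y)$ is a fixed point and $\mu(\mu(x)\oplus\mu(y))=\mu(x)\oplus\mu(y)$.

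For the converse I assume the MV-axioms and derive (S0)--(S2). First $\mu(0)=\mu(1')=\mu(1)'=0$ by axioms (1)--(2). For any $u,v$ the pair $a=u\wedge v,\ b=u\ominus v$ satisfies $a\oplus b=u$ and $a\odot b=0$, so the ``carry'' term of axiom (3) vanishes and I obtain the valuation identity $\mu(u)=\mu(u\wedge v)\oplus\mu(u\ominus v)$. Taking $u=y,\ v=x$ with $x\le y$ forces $y\wedge x=x$, whence $\mu(y)=\mu(x)\oplus\mu(y\ominus x)\ge\mu(x)$; this is monotonicity, i.e. (S0). Monotonicity then upgrades the valuation identity: from $x\wedge y\le y\le(x\ominus y)'$ and axiom (2) I get $\mu(x\wedge y)\le\mu(x\ominus y)'$, i.e. $\mu(x\wedge y)\odot\mu(x\ominus y)=0$, so the two summands in $\mu(x)=\mu(x\wedge y)\oplus\mu(x\ominus y)$ are orthogonal and can be cancelled to give $\mu(x\ominus y)=\mu(x)\ominus\mu(x\wedge y)$; translating back this is precisely (S1). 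Finally (S2) follows exactly as before: axioms (2) and (4) make $\mu(X)$ closed under $'$ and $\oplus$, so $\mu(\mu(x)\ominus\mu(y))=\mu(x)\ominus\mu(y)$, i.e. $\mu(\mu(x)*\mu(y))=\mu(x)*\mu(y)$.

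The main obstacle is the converse, and specifically the order of the derivation. The BCK-axiom (S1) translates into the additivity-type statement $\mu(x\ominus y)=\mu(x)\ominus\mu(x\wedge y)$, and recovering the truncated difference $\ominus$ by cancelling a truncated sum $\oplus$ is legitimate only after the orthogonality $\mu(x\wedge y)\odot\mu(x\ominus y)=0$ is established, which itself rests on monotonicity. Hence (S0) must be extracted from axiom (3) first — through the orthogonal decomposition $x=(x\wedge y)\oplus(x\ominus y)$ — before (S1) can be obtained. Keeping the MV/BCK dictionary straight, in particular the identity $y\ominus(x\odot y)=(1*x)\wedge y$ that powers axiom (3) in both directions, is the other point requiring care.
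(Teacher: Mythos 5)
Your proof is correct, and the two directions compare differently with the paper's own. The forward direction is essentially the paper's argument: axiom (2) comes from (S1) applied to $1*x$, and axiom (3) from the same key identity $y\ominus(x\odot y)=y\wedge x'$ (the paper writes it as the chain ending in $y*(y*Nx)$) combined with (S1) and commutativity; you additionally verify axiom (4) explicitly, via closure of the fixed-point set $\mu(X)$ under $'$ and $*$, a point the paper silently omits. The converse is where you genuinely diverge. The paper's route (stated only as a hint) is a direct substitution: replace $x$ by $x'$ in MV axiom (3), use $x'\oplus y=(x*y)'$ together with its displayed identity $(y'\oplus(x'\odot y))'=y*(y*x)$, and (S1) falls out in one line, with no appeal to monotonicity. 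You instead extract (S0) first, from the orthogonal decomposition $u=(u\wedge v)\oplus(u\ominus v)$ fed into axiom (3), and then recover (S1) by cancelling an orthogonal summand, using the standard MV facts $(u\wedge v)\odot(u\ominus v)=0$ and that orthogonal sums cancel. Your route is longer for (S1), but it buys completeness: (S0) does not follow from (S1) (putting $x*y=0$ into (S1) yields only $0=0$), so a monotonicity argument like your valuation identity is genuinely needed, and the paper's hint addresses neither (S0) nor (S2); in that sense your converse is the more complete of the two. On the other hand, the paper's substitution trick shows that your closing claim — that (S0) \emph{must} be established before (S1) — is an artifact of your particular route rather than a logical necessity.
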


\begin{proof}
Let $x,y\in X$. Then $\mu(x')=\mu(1*x)=\mu(1)*\mu(x*(x*1))=1*\mu(x)=\mu(x)'$. Then
\begin{eqnarray*}
\mu(x)\oplus \mu(y\ominus (x\odot y))&=&\mu(x)\oplus \mu((y'\oplus (x\odot y))')\\
&=& \mu(x)\oplus \mu(y*(x\odot y))\\
&=& \mu(x)\oplus \mu(y*(y\odot x))\\
&=& \mu(x)\oplus \mu(y*(y'\oplus x')')\\
&=& \mu(x)\oplus \mu(y*(y*x'))\\
&=& (\mu(x)'*\mu(y*(y*Nx)))'\\
&=& (\mu(Nx)*\mu(y*(y*Nx)))'\\
&=& \mu(Nx*y)', \mbox{ since $X$ is commutative and $\mu$ is a left state operator}\\
&=&\mu(N(Nx*y))\\
&=&\mu(x\oplus y),
\end{eqnarray*}
so that, $(X,\mu)$ is a state MV-algebra. Conversely, consider the MV-algebra $(X,\oplus,',0)$. If
$(X,\sigma)$ is a state MV-algebra, then we can easily show that $\sigma:X\ra X$ is a
left state operator on a BCK-algebra $(X,*,0),$ where $x*y := x\odot y',$ $x,y \in X.$ In fact, it follows from the following identity on $X$:
$$(y'\oplus (x'\odot y))'=y*(x'\odot y)=y*(y''\odot x')=y*(y'\oplus x)'=y*(y*x).$$
\end{proof}

%%%%%%%%%%%%%%%%%%%%%%%%%%%%%%%%%%%%%%%%%%%%%%%%%%%%%%%%%%
%%%%%%%%%%%%%%%%%%%%%%%%%%%%%%%%%%%%%%%%%%%%%%%%%%%%%%%%%%
%%%%%%%%%%%%%%%%%%%%%%%%%%%%%%%%%%%%%%%%%%%%%%%%%%%%%%%%%%
\section{State-morphism BCK-algebras}%4

In the section, we introduce and study state-morphism BCK-algebras which is an important subfamily of the family of state BCK-algebras.

\begin{defn}\label{4.1}
Let $(X,*,0)$ be a BCK-algebra. A homomorphism $\mu:X\ra X$ is called a {\it state-morphism operator} if $\mu^{2}=\mu$,
where $\mu^2=\mu\circ \mu,$ and the pair $(X,\mu)$ is called a {\it state-morphism BCK-algebra}.
\end{defn}

By (BCK8), every state-morphism BCK-algebra is a (left) state BCK-algebra. We note that not every state-morphism operator is also a right state operator. For example, $\Id_X$ is both a state-morphism operator and a left state operator, but it is  a right state operator iff $X$ is a commutative BCK-algebra.

\begin{exm}\label{4.2}
(i) For each BCK-algebra $X$, the identity map $\Id_X:X\ra X$ and the zero operator $O_X(x)=0,$ $x \in X,$ are  state-morphism operators.

(ii) Let $x$ be an element of $X$ such that $a*x=a*x^2$ for all $a\in X$.
Define $\alpha_x:X\ra X$ by $\alpha_x(a)=a*x$ for all $a\in X$. First, we show that $\alpha_x$ is a homomorphism.
By (BCK4), $0*x=0$ for all $x\in X$.
Let $a,b\in X$. Then by (BCK6), we have  $(b*x)*b=(b*b)*x=0*x=0$, so
$b*x\leq b$. Using (BCK6) and (BCK7), we obtain that $(a*b)*x=(a*x)*b\leq (a*x)*(b*x)$.
On the other hand, by (BCK1) and (BCK6),  $(a*x)*(b*x)=(a*x^2)*(b*x)\leq (a*x)*b=(a*b)*x$. Hence, $\alpha_x$ is a homomorphism. Therefore,
$$
\alpha_x(\alpha_x(a))=(a*x)*x=a*x=\alpha_x(a),
$$
so that, $\alpha_x$ is a state-morphism operator on $X$. For example, if $x =0,$ then $\alpha_0={\rm Id}_X.$ In particular, if $X$ is a
positive implicative BCK-algebra, then by \cite[Thm 3.1.1]{chang}, for all $a,x\in X$, we have $a*x^2=a*x$ and so,
$\alpha_x$ is a state-morphism operator on $X$ for all $x\in X$.

(iii) {\it Every state operator $\mu$ on a linearly ordered commutative BCK-algebra $X$ is a state-morphism operator.} Indeed, if $x\le y,$ then $x*y=0$ and by Proposition \ref{3.2}, we have $0\le\mu(x)*\mu(y) \le \mu(x*y)=\mu(0).$ If $y \le x$, by the definition of a state operator, we have $\mu(x*y)=\mu(x)*\mu(x*(x*y))=\mu(x)*\mu(y)$. The both cases entail $\mu$ is an endomorphism of the BCK-algebra $X.$

(iv) {\it Every right state operator $\mu$ on a linearly ordered commutative BCK-algebra $X$ is a state-morphism operator.}
Indeed, by Proposition \ref{3.3.1}(iii), $\mu$ is a left state operator, too. Take $x,y\in X$. Since $X$ is a chain, then $x*y=0$ or $y*x=0$.

If $x*y=0$, then by Proposition \ref{3.2}(ii), $0\leq
\mu(x)*\mu(y)\leq\mu(x*y)=\mu(0)=0$. So,
$\mu(x)*\mu(y)=\mu(x*y)$.

If $y*x=0$, then from Proposition \ref{3.3.1} we have $\mu(x*y)=\mu(x)*\mu(y).$ Consequently, $\mu$ is a homomorphism, and $\mu$ is a state-morphism operator on $X.$
\end{exm}

Example \ref{counter} shows that if $\mu$ is a left state operator on a linearly ordered BCK-algebra, then $\mu$ is not necessarily a state-morphism operator on $X.$ Indeed, we have $\mu(3*2)=\mu(3)=2\neq 0=2*2=\mu(3)*\mu(2)$. Thus $\mu$ is not a state morphism operator.

As a consequence of Corollary \ref{3.8}, we have by \cite{DD1}, that  there are also bounded commutative BCK-algebras $X$ having state operators which are not  state-morphism operators.

\begin{defn}\label{4.3}
Let $(X,\mu)$ be a state-morphism BCK-algebra. An ideal $I$ of a BCK-algebra $X$ is called a {\it state ideal} if
$\mu(I)\s I$. If $T$ is a subset of $X$, then $\langle T\rangle_{s}$ is the least state ideal of $X$ containing $T$.
\end{defn}

It can be easily shown that, if $(X,\mu)$ is a state BCK-algebra, then $\Ker(\mu)$ is a state ideal of $X$.
Clearly, the intersection of every arbitrary family of state ideals of $X$ is a state ideal. So,
$$\langle T\rangle_{s}=\bigcap\{I \mid  T\s I, \  I \ \mbox{\rm is a state ideal of}\ (X,\mu)\}.$$

\begin{prop}\label{4.4}
Let $I$ be an ideal of a state-morphism BCK-algebra $(X,\mu)$. Then
$$\langle I\rangle_{s}=\{a\in X \mid (\cdots((a*\mu(x_{1}))*\mu(x_{2}))*\cdots)*\mu(x_n)\in I, \quad \exists\, n\in \mathbb{N},\quad \exists\, x_1,x_2,\ldots,x_n\in I\}.$$
\end{prop}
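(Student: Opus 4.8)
The plan is to recognize the set on the right-hand side, call it $A$, as the \emph{ordinary} ideal $\langle I\cup\mu(I)\rangle$ generated by $I$ together with its image $\mu(I)=\{\mu(x)\mid x\in I\}$, and then to verify that this ideal happens to be closed under $\mu$, which makes it the smallest \emph{state} ideal containing $I$. This reduces almost everything to Theorem~\ref{2.2} and isolates the one genuinely new point, namely the $\mu$-closure.

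First I would apply the second part of Theorem~\ref{2.2} with $S=\mu(I)$. It says that $a\in\langle I\cup\mu(I)\rangle$ exactly when $(\cdots((a*a_1)*a_2)*\cdots)*a_n\in I$ for some $n$ and some $a_1,\ldots,a_n\in\mu(I)$; writing each $a_i$ as $\mu(x_i)$ with $x_i\in I$ reproduces verbatim the condition defining $A$. Hence $A=\langle I\cup\mu(I)\rangle$, so in particular $A$ is an ideal of $X$ with $I\s A$, and no separate ideal computation (as in Proposition~\ref{3.4.1}) is needed.

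The substantive step is to show $\mu(A)\s A$. Take $a\in A$ with witness $c:=(\cdots((a*\mu(x_1))*\mu(x_2))*\cdots)*\mu(x_n)\in I$, where $x_1,\ldots,x_n\in I$. Since $\mu$ is a homomorphism with $\mu^2=\mu$, applying $\mu$ and using $\mu(\mu(x_i))=\mu(x_i)$ gives $\mu(c)=(\cdots((\mu(a)*\mu(x_1))*\cdots)*\mu(x_n))$. Now I would append one more factor $\mu(c)$ to the list: since the last displayed element \emph{equals} $\mu(c)$, we get $(\cdots(((\mu(a)*\mu(x_1))*\cdots)*\mu(x_n))*\mu(c)=\mu(c)*\mu(c)=0\in I$ by (BCK5). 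As $c\in I$ too, the list $x_1,\ldots,x_n,c$ consists of elements of $I$ and witnesses $\mu(a)\in A$. Thus $\mu(A)\s A$ and $A$ is a state ideal.

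Finally, for minimality, if $J$ is any state ideal with $I\s J$, then $\mu(I)\s\mu(J)\s J$, so $I\cup\mu(I)\s J$ and therefore $A=\langle I\cup\mu(I)\rangle\s J$; combined with the above, $A=\langle I\rangle_{s}$. The point needing the most care is precisely the closure $\mu(A)\s A$: one cannot conclude $\mu(c)\in I$ directly because $I$ is only assumed to be an ideal (not a state ideal), so the idea is to feed $c$ itself back in as an extra generator, collapsing the new witness to $\mu(c)*\mu(c)=0$.
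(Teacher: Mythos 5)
Your proof is correct, and although its decisive step coincides with the paper's, the packaging is genuinely different and arguably cleaner. The paper takes the right-hand set, verifies by hand that it is an ideal (a multi-line chain of inequalities using (BCK5), (BCK6), (BCK7), in the spirit of Proposition~\ref{3.4.1}), then proves closure under $\mu$, and finally notes minimality. You instead identify the right-hand set at the outset as the ordinary generated ideal $\langle I\cup\mu(I)\rangle$ via the second part of Theorem~\ref{2.2} with $S=\mu(I)$; this makes the ideal property free and reduces minimality to the observation that any state ideal containing $I$ contains $I\cup\mu(I)$, hence contains the ideal it generates. The one substantive step left --- and here your argument is the same trick as the paper's --- is the $\mu$-closure: apply the idempotent endomorphism $\mu$ to the witness $c=(\cdots((a*\mu(x_1))*\mu(x_2))*\cdots)*\mu(x_n)\in I$ to get $\mu(c)=(\cdots((\mu(a)*\mu(x_1))*\mu(x_2))*\cdots)*\mu(x_n)$, then append $\mu(c)$ itself as one more generator so that the new witness collapses to $\mu(c)*\mu(c)=0\in I$, the enlarged generator list $x_1,\ldots,x_n,c$ still lying in $I$ (the paper does exactly this with its witness $z$). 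What your route buys is economy and transparency: it isolates the single point where the state-morphism hypothesis enters, and it exhibits $\langle I\rangle_{s}$ structurally as $\langle I\cup\mu(I)\rangle$, a fact the paper's proof never states explicitly.
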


\begin{proof}
Let $J=\langle I\rangle_{s}=\{a\in X \mid (\cdots((a*\mu(x_{1}))*\mu(x_{2}))*\cdots)*\mu(x_n)\in I, \quad \exists\, n\in \mathbb{N},\quad \exists\, x_1,x_2,\ldots,x_n\in I\}$. Then clearly, $I\s J$ (since $0\in I$ and $\mu(0)=0$).
First, we show that $J$ is a state ideal of $X$. Let $a,b*a\in J$ for some
$a,b\in X$. Then there exist $m,n\in\mathbb{N}$ and $x_1,\ldots,x_n,y_1,\ldots,y_m\in X$ such that $(\cdots((a*\mu(x_{1}))*\mu(x_{2}))*\cdots)*\mu(x_n)\in I$
and $(\cdots(((b*a)*\mu(y_{1}))*\mu(y_{2}))*\cdots)*\mu(y_m)=y\in I$. By (BCK5) and (BCK6), we have
$$(\cdots(((b*y)*\mu(y_{1}))*\mu(y_{2}))*\cdots)*\mu(y_m)\leq a$$ and so by (BCK7),
$$(\cdots(((\cdots(((b*y)*\mu(y_{1}))*\mu(y_{2}))*\cdots)*\mu(y_m))*\mu(x_1))*\cdots)*\mu(x_n)\leq (\cdots((a*\mu(x_{1}))*\mu(x_{2}))*\cdots)*\mu(x_n)\in I.
$$
Since $y\in I$ and $I$ is an ideal of $X$, then by (BCK6),
$$(\cdots(((\cdots((b*\mu(y_{1}))*\mu(y_{2}))*\cdots)*\mu(y_m))*\mu(x_1))*\cdots)*\mu(x_n)\in I$$
and so $b\in J$. It follows that $J$ is an ideal of $X$. Moreover, if $c\in J$, then there
exist $n\in\mathbb{N}$ and $z_1,\ldots,z_n\in X$ such that $(\cdots((c*\mu(z_{1}))*\mu(z_{2}))*\cdots)*\mu(z_n)=z\in I$.
Hence, by (BCK5) and (BCK6),  we get that $((\cdots((\mu(c)*\mu(z_{1}))*\mu(z_{2}))*\cdots)*\mu(z_n))*\mu(z)=\mu(0)=0\in I$. Also,
$z_1,\ldots,z_n,z\in I$, so by definition of $J$, $\mu(c)\in J$. Thus, $\mu(J)\s J$ and so $J$ is a state ideal of $X$ containing $I$. Clearly,
if $K$ is a state ideal of $X$ containing $I$, then $J\s K$. Therefore, $J$ is the least state ideal of $X$ containing $I$. That is $J=\langle I\rangle_{s}$.
\end{proof}

\begin{prop}\label{4.5}
Let $(X,\mu)$ be a state-morphism BCK-algebra. Then the following hold:
\begin{itemize}
\item[{\rm (i)}] $\Ker(\mu)=\{x *\mu(x) \mid  x\in X\}=\{\mu(x)* x \mid  x\in X\}$.
%\item[{\rm (ii)}] ${\rm Im}(\mu)=\{\mu(x) \mid  x\in X\}.$
%\item[{\rm (iii)}] $\Ker(\mu)\cap {\rm Im}(\mu)=\{0\}.$
\item[{\rm (ii)}] $X=\langle \Ker(\mu)\cup {\rm Im}(\mu)\rangle$.
\end{itemize}
\end{prop}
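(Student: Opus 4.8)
The plan is to prove (i) by double inclusion for each of the two descriptions of $\Ker(\mu)$, and then to obtain (ii) as a short consequence of the first description together with the ideal axiom.

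For the forward inclusions in (i), I would use only that $\mu$ is an endomorphism with $\mu^2=\mu$. For any $x\in X$, since $\mu$ is a homomorphism, $\mu(x*\mu(x))=\mu(x)*\mu(\mu(x))=\mu(x)*\mu(x)=0$ by (BCK5), so $x*\mu(x)\in\Ker(\mu)$; symmetrically $\mu(\mu(x)*x)=\mu(\mu(x))*\mu(x)=\mu(x)*\mu(x)=0$, so $\mu(x)*x\in\Ker(\mu)$. Thus $\{x*\mu(x)\mid x\in X\}\s\Ker(\mu)$ and $\{\mu(x)*x\mid x\in X\}\s\Ker(\mu)$. For the reverse inclusion of the first description, I would take $k\in\Ker(\mu)$, so that $\mu(k)=0$, and note that $k=k*0=k*\mu(k)$ by (BCK2); hence $k$ has the required form with witness $x=k$, and the first equality $\Ker(\mu)=\{x*\mu(x)\mid x\in X\}$ follows.

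The step I expect to be the main obstacle is the reverse inclusion for the second description, $\Ker(\mu)\s\{\mu(x)*x\mid x\in X\}$. The naive witness $x=k$ fails here, since $\mu(k)*k=0*k=0$ by (BCK4), recovering $0$ rather than $k$; (BCK2) is of no help because the surviving term sits on the wrong side of $*$. Establishing this inclusion amounts to proving that the map $x\mapsto\mu(x)*x$ is onto $\Ker(\mu)$, and this is the point at which the homomorphism identity would have to be exploited more carefully (and where any additional structural hypothesis on $X$, such as commutativity, would enter).

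For (ii), the inclusion $\langle\Ker(\mu)\cup{\rm Im}(\mu)\rangle\s X$ is automatic, so it remains to show $X\s I$, where $I:=\langle\Ker(\mu)\cup{\rm Im}(\mu)\rangle$. Fix $x\in X$. By the first equality of (i), $x*\mu(x)\in\Ker(\mu)\s I$, while $\mu(x)\in{\rm Im}(\mu)\s I$. Applying the defining property of an ideal ($a*b\in I$ and $b\in I$ imply $a\in I$) with $a=x$ and $b=\mu(x)$ gives $x\in I$; hence $X\s I$ and equality holds. Equivalently, via Theorem \ref{2.2} one checks directly that the witnesses $a_1=\mu(x)\in{\rm Im}(\mu)$ and $a_2=x*\mu(x)\in\Ker(\mu)$ satisfy $(x*a_1)*a_2=(x*\mu(x))*(x*\mu(x))=0$ by (BCK5), so that $x\in\langle\Ker(\mu)\cup{\rm Im}(\mu)\rangle$.
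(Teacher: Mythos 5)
Your proofs of the forward inclusions, of the first equality $\Ker(\mu)=\{x*\mu(x)\mid x\in X\}$, and of part (ii) are correct and coincide with the paper's own argument (the paper likewise derives (ii) from the first equality via Theorem \ref{2.2}, with the same witnesses $(x*\mu(x))*(x*\mu(x))=0$). The step you left open, however, is not a gap you could have closed: the inclusion $\Ker(\mu)\subseteq\{\mu(x)*x\mid x\in X\}$ is false in general, so the second equality asserted in (i) is itself incorrect. Take any non-trivial BCK-algebra $X$ and the zero operator $O_X$ (i.e., $O_X(x)=0$ for all $x\in X$), which the paper itself lists as a state-morphism operator in Example \ref{4.2}(i). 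Then $\Ker(O_X)=X$, while $\{O_X(x)*x\mid x\in X\}=\{0*x\mid x\in X\}=\{0\}$ by (BCK4), so the two sets differ whenever $X\neq\{0\}$. Note also that commutativity of $X$ does not rescue the claim, since $X$ may be taken commutative in this counterexample; the extra structural hypothesis you anticipated would not suffice.

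The paper's proof disposes of this half with ``In a similar way, we can show that $\Ker(\mu)=\{\mu(x)*x\mid x\in X\}$,'' and the breakdown of that symmetry is exactly the obstruction you identified: the first equality rests on $k=k*0=k*\mu(k)$ by (BCK2), which requires the kernel element to sit on the left of $*$, whereas the mirrored computation gives $\mu(k)*k=0*k=0$ by (BCK4), which destroys $k$. The correct statement is $\Ker(\mu)=\{x*\mu(x)\mid x\in X\}\supseteq\{\mu(x)*x\mid x\in X\}$, with the last inclusion possibly proper. This causes no damage downstream: your proof of (ii), the verification of (Ap1) in Theorem \ref{adjoint1}, and Proposition \ref{condition} use only the first equality and the forward inclusion $\{\mu(x)*x\mid x\in X\}\subseteq\Ker(\mu)$, both of which you established. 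So your instinct to isolate this step as the genuine difficulty was sound; the resolution is that it is not provable, and the paper's assertion there is in error.
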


\begin{proof}
(i) Since $\mu^2=\mu$ and $\mu$ is a homomorphism, we have  $\{x*\mu(x) \mid  x\in X\}\s \Ker(\mu)$. Also, for each
$x\in \Ker(\mu)$, $x=x*0=x*\mu(x)\in \{x*\mu(x) \mid  x\in X\}$, so $\Ker(\mu)=\{x*\mu(x) \mid  x\in X\}$.
In a similar way, we can show that $\Ker(\mu)=\{\mu(x)*x \mid  x\in X\}$.

(ii) Let $x\in X$. By (i), $x*\mu(x)\in \Ker(\mu)$. Since $\mu(x)\in {\rm Im}(\mu)$, then by Theorem \ref{2.2},
we get that $x\in \langle \Ker(\mu)\cup {\rm Im}(\mu)\rangle$.
Therefore, $X=\langle \Ker(\mu)\cup {\rm Im}(\mu)\rangle$.
\end{proof}

%%%%%%%%%%%%%%%%%%%%%%%%%%%%%%%%%%%%%%%%%%%%%%%%%%%%

Let $X$ be a bounded BCK-algebra and $m:X\ra [0,1]$ be a state-morphism. Since $m(1)=1$ and $m$ is an order preserving map, then $m(X)\s [0,1]$.
Therefore, $m$ is a homomorphism from the BCK-algebra $X$ to the BCK-algebra $([0,1],*_{\mathbb{R}},0)$.
Hence, $X/\Ker(m)$ and $m(X)$ are isomorphic.
By \cite[Thm 2.9]{Dvu1}, $\Ker(m)$ is a commutative ideal of $X$ and so $X/\Ker(m)$
is a bounded commutative BCK-algebra. Since
$([0,1],*_{\mathbb{R}},0)$ is a simple BCK-algebra and $m(X)$ is a subalgebra of it, then $m(X)$ is simple, so $X/\Ker(m)$ is simple, too.
It follows that $\Ker(m)$ is a maximal commutative ideal  of $X$. Therefore, $(X/\Ker(m),\oplus,',0/\Ker(m))$ is an MV-algebra, where
$x/\Ker(m)\oplus y/\Ker(m)=N(Nx*y)/\Ker(m)$ and $(x/\Ker(m))'=Nx/\Ker(m)$ for all $x,y\in X$.
It can be  easily shown that the map $f:X/\Ker(m)\ra [0,1]$ defined
by $f(x/\Ker(m))=m(x)$ is an MV-homomorphism and $X/\Ker(m)$ is a simple MV-algebra
(since $I$ is a BCK-ideal of $X/\Ker(m)$ if and only if $I$ is an
MV-ideal of $X/\Ker(m)$). By \cite[Thm 1.1]{Mun2},  there exists a unique one-to-one MV-homomorphism $\tau:X/\Ker(m)\ra [0,1]$. Thus,
$f=\tau$. By summing up the above results, we get that $m=\tau\circ\pi_{{\tiny \Ker}(m)}$,
where $\pi_{{\tiny \Ker}(m)}:X\ra X/\Ker(m)$ is the canonical epimorphism.
Conversely, let $X$ be a bounded BCK-algebra such that $X$ has at least one  commutative ideal, $I$ say. Then there exists a maximal ideal
$M$ of $X$ such that $I\s M$. In fact, $M$ is a maximal element of the set
$\{H\mid  \mbox{$H$ is an ideal of $X$ containing $I$, $1\notin H$}\}$.
Since $I$ is a commutative ideal and $I\s M$, then by \cite[Thm 2.5.2]{chang},
$M$ is a commutative ideal and so $X/M$ is a bounded commutative
simple BCK-algebra. It follows that $(X/M,\oplus,',0)$ is a
simple MV-algebra. By \cite[Thm 1.1]{Mun2},  there exists a unique
MV-homomorphism, $\tau_M:(X/M,\oplus,',0)\ra ([0,1],\oplus,',0)$.
Clearly, $\tau_M:X/M\ra [0,1]$ is a BCK-homomorphism and so
$\tau_M\circ\pi_M:X\ra [0,1]$ is a state-morphism, where $\pi_{M}:X\ra X/M$ is the canonical epimorphism.

Now, let $X$ be a bounded BCK-algebra and $\mu:X\ra X$ be a state-morphism operator on $X$ such that
$\Ker(\mu)$ is a commutative ideal of $X$. Then $X/\Ker(\mu)$ is a bounded commutative BCK-algebra.
Thus, $\mu(X)$ is an MV-algebra (since $\mu(X)\cong X/\Ker(\mu))$.
Suppose that $H$ is a maximal ideal of the MV-algebra $\mu(X)$ and $\pi_H:\mu(X)\ra \mu(X)/H$ is the canonical epimorphism.
Then $\mu(X)/H$ is a simple MV-algebra and so by \cite[Thm 1.1]{Mun2},
there is a unique MV-homomorphism $\tau_H:\mu(X)/H\ra [0,1]$. Clearly, $\tau_H\circ \pi_H\circ\mu: X\ra [0,1]$ is a
measure-morphism. Moreover, if $\mu(1)=1$, then  $\tau_H\circ \pi_H\circ\mu$ is a state-morphism.
%%%%%%%%%%%%%%%%%%%%%%%%%%%%%%%%%%%%%%%%%%%%%%%%%%%%

\begin{rmk}\label{4.5.0}
Let $\mu$ be a state-morphism operator on $X$ such that $\Ker(\mu)=\{0\}$. Then for all $x\in X$, $x*\mu(x),\mu(x)*x\in\Ker(\mu)=\{0\}$ and so
by (BCK3), $\mu(x)=x$. Therefore, $\mu=\Id_X$.
\end{rmk}

\begin{cor}\label{4.5.1}
If $X$ is a simple BCK-algebra, then $\Id_X$ and $O_X$ are all state-morphism operators of $X.$
\end{cor}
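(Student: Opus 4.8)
The plan is to show that any state-morphism operator on a simple $X$ is forced to be one of the two trivial ones exhibited in Example \ref{4.2}(i), where it was already noted that $\Id_X$ and $O_X$ are indeed state-morphism operators on every BCK-algebra. So the only content of the corollary is that there are no further state-morphism operators, and the whole argument will run through the kernel.

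Let $\mu$ be an arbitrary state-morphism operator on $X$. Since every state-morphism BCK-algebra is a (left) state BCK-algebra (by (BCK8)), Proposition \ref{3.2}(iii) applies and tells us that $\Ker(\mu)=\mu^{-1}(\{0\})$ is an ideal of $X$. Now I invoke simplicity: by definition a simple BCK-algebra has only the two ideals $\{0\}$ and $X$, so $\Ker(\mu)=\{0\}$ or $\Ker(\mu)=X$. These two alternatives will correspond exactly to the two operators in the statement.

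In the case $\Ker(\mu)=X$, every element of $X$ is sent to $0$, so $\mu(x)=0$ for all $x\in X$, i.e.\ $\mu=O_X$. In the case $\Ker(\mu)=\{0\}$, I would simply quote Remark \ref{4.5.0}, which says that a state-morphism operator with trivial kernel must be the identity, so $\mu=\Id_X$. Combining the two cases with the fact that both $\Id_X$ and $O_X$ really are state-morphism operators completes the proof. There is no genuine obstacle here: the result is an immediate consequence of simplicity together with the already-established facts that $\Ker(\mu)$ is an ideal and that a trivial kernel forces $\mu=\Id_X$; the only point to be careful about is to justify that Proposition \ref{3.2} is available for state-morphism operators, which is exactly the remark following Definition \ref{4.1}.
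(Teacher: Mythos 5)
Your proof is correct and follows exactly the paper's own argument: $\Ker(\mu)$ is an ideal, simplicity forces $\Ker(\mu)=\{0\}$ or $\Ker(\mu)=X$, and Remark \ref{4.5.0} handles the trivial-kernel case while $\Ker(\mu)=X$ gives $O_X$. Your extra care in justifying that Proposition \ref{3.2}(iii) applies to state-morphism operators is a nice touch but does not change the route.
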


\begin{proof}
Let $X$ be a simple BCK-algebra and $\mu:X\ra X$ be a state-morphism operator on $X$. Then $\Ker(\mu)=\{0\}$ or $\Ker(\mu)=X$.
Hence by Remark \ref{4.5.0}, $\mu=\Id_X$ or $\mu(x)=0$ for all $x\in X$.
\end{proof}

\begin{defn}
A state ideal $I$ of a state-morphism BCK-algebra $(X,\mu)$ is called a {\it prime state ideal} of $(X,\mu)$ if,
given state ideals $A,B$ of $(X,\mu)$, $A\cap B\s I$ implies that $A\s I$ or $B\s I$.
\end{defn}

\begin{thm}\label{4.6}
Let $(X,\mu)$ be a subdirectly irreducible state-morphism BCK-algebra. Then $\Ker(\mu)$ is a prime state ideal.
\end{thm}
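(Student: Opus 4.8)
The plan is to recast primeness of $\Ker(\mu)$ as a property of the subalgebra $\mu(X)$ and then read it off from the characterization of subdirect irreducibility in Theorem \ref{irr}. The first step is the observation that, for a state ideal $A$ of $(X,\mu)$, one has $A\s\Ker(\mu)$ if and only if $A\cap\mu(X)=\{0\}$: the forward direction follows from Proposition \ref{3.2}(v), and for the converse, if $a\in A$ then $\mu(a)\in A\cap\mu(X)=\{0\}$, so $a\in\Ker(\mu)$. Consequently, for state ideals $A,B$ we have $A\cap B\s\Ker(\mu)$ iff $(A\cap\mu(X))\cap(B\cap\mu(X))=\{0\}$, while $A\s\Ker(\mu)$ iff $A\cap\mu(X)=\{0\}$. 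Thus the statement ``$\Ker(\mu)$ is a prime state ideal'' is equivalent to ``$\{0\}$ is meet-irreducible in the ideal lattice of $\mu(X)$'', i.e.\ any two nonzero ideals of $\mu(X)$ have nonzero intersection.

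To make this reduction precise I would reuse the correspondence established inside the proof of Theorem \ref{irr}(i): for every ideal $J$ of $\mu(X)$, the ideal $\langle J\rangle_X$ is a state ideal of $(X,\mu)$ and $\langle J\rangle_X\cap\mu(X)=J$ (this argument uses only $\mu^2=\mu$ and Proposition \ref{3.2}, not $\Ker(\mu)=\{0\}$). Hence the ideals of $\mu(X)$ are exactly the traces $A\cap\mu(X)$ of state ideals $A$, and the problem is genuinely a problem about $\mu(X)$. Now split according to Theorem \ref{irr}. If $\Ker(\mu)=\{0\}$, then by Remark \ref{4.5.0} we have $\mu=\Id_X$, so $\mu(X)=X$ and, by Theorem \ref{irr}(i), $X$ is subdirectly irreducible; by Remark \ref{blyth1} it then possesses a least nonzero (state) ideal contained in every nonzero ideal, whence any two nonzero ideals meet nontrivially and $\{0\}=\Ker(\mu)$ is prime.

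The remaining case $\Ker(\mu)\neq\{0\}$ is where I expect the real difficulty. Here Theorem \ref{irr}(ii) supplies a least nonzero state ideal $I$ that sits inside $\Ker(\mu)$ and satisfies $I\s\langle a\rangle_X$ for every nonzero $a\in\mu(X)$. Given state ideals $A,B$ with $A\not\s\Ker(\mu)$ and $B\not\s\Ker(\mu)$, the first paragraph provides nonzero elements $u\in A\cap\mu(X)$ and $v\in B\cap\mu(X)$, and I would try to exhibit a common element of $A\cap B$ lying outside $\Ker(\mu)$ --- equivalently a nonzero element of $\langle u\rangle_{\mu(X)}\cap\langle v\rangle_{\mu(X)}$ --- which would contradict $A\cap B\s\Ker(\mu)$. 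The obstacle is exactly that the monolith $I$ furnished by Theorem \ref{irr}(ii) lives in the kernel and is therefore invisible to $\mu(X)$ (it meets $\mu(X)$ only in $0$), so it cannot by itself force $\langle u\rangle_{\mu(X)}\cap\langle v\rangle_{\mu(X)}\neq\{0\}$. I would concentrate the effort on this step, extracting from the condition $\Ker(\mu)\cap\langle a\rangle_X\neq\{0\}$ (for all nonzero $a\in\mu(X)$) enough to link the two principal ideals $\langle u\rangle_{\mu(X)}$ and $\langle v\rangle_{\mu(X)}$; this is the crux on which the whole argument turns, and any extra hypothesis making $\mu(X)$ a chain would settle it at once.
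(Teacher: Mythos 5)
Your proposal is not a complete proof, but its diagnosis is exactly right. The reduction in your first two paragraphs is correct: for a state ideal $A$ one has $A\s\Ker(\mu)$ iff $A\cap\mu(X)=\{0\}$, the ideals of $\mu(X)$ are precisely the traces $A\cap\mu(X)$ of state ideals $A$ (the argument inside the proof of Theorem \ref{irr}(i) indeed never uses $\Ker(\mu)=\{0\}$), and hence the theorem is equivalent to the assertion that any two nonzero ideals of $\mu(X)$ meet nontrivially. Your treatment of the case $\Ker(\mu)=\{0\}$ is also correct. But the case $\Ker(\mu)\neq\{0\}$, which is the main case, is left open --- you say so yourself --- so as it stands the proposal has a genuine gap.

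What you should know is that you have put your finger on the soft spot of the paper's own proof, which runs differently: given state ideals $I,J$ with $I\cap J\s\Ker(\mu)$, the paper embeds $X/\Ker(\mu)$ subdirectly into $\mu(X)/I\times\mu(X)/J$ via $x/\Ker(\mu)\mapsto(\mu(x)/I,\mu(x)/J)$, then asserts that $X/\Ker(\mu)\cong\mu(X)$ is subdirectly irreducible ``by Theorem \ref{irr}(ii)'', so one projection is an isomorphism, forcing $I\s\Ker(\mu)$ or $J\s\Ker(\mu)$. But Theorem \ref{irr}(ii) asserts subdirect irreducibility of $\Ker(\mu)$, \emph{not} of $\mu(X)$; when the kernel is nonzero, nothing in Theorem \ref{irr} constrains the ideal lattice of $\mu(X)$ beyond the condition $\Ker(\mu)\cap\langle a\rangle_X\neq\{0\}$, which, as you observe, is invisible inside $\mu(X)$. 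In fact the property both you and the paper need can fail: let $K=\{0,k\}$ be the two-element BCK-algebra, $M=M_1\times M_2$ a direct product of two nontrivial BCK-algebras, let $X$ be the ordinal sum with $K$ at the bottom (so $x*y=x$ for $x\in M\setminus\{0\}$, $y\in K$, and $x*y=0$ for $x\in K$, $y\in M\setminus\{0\}$), and let $\mu$ annihilate $K$ and fix $M$ pointwise. Then $\mu$ is a state-morphism operator; every nonzero state ideal $H$ contains $K$ (if $0\neq h\in H\cap M$, then $k*h=0\in H$ and $h \in H$ give $k\in H$), so $(X,\mu)$ is subdirectly irreducible by Remark \ref{blyth1}; and yet $A=K\cup(M_1\times\{0\})$ and $B=K\cup(\{0\}\times M_2)$ are state ideals with $A\cap B=K=\Ker(\mu)$ and neither contained in $\Ker(\mu)$. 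So the obstruction you ran into is not a failure of your strategy: without an additional hypothesis guaranteeing that $\{0\}$ is meet-irreducible in the ideal lattice of $\mu(X)$ (for instance your suggestion that $\mu(X)$ be a chain, as in Proposition \ref{condition}), the statement cannot be proved, and the paper's own argument does not close this gap either.
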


\begin{proof}
Let $I$ and $J$ be two state ideals of $(X,\mu)$  such that $I\cap J\s \Ker(\mu)$.
Define $\phi:X/\Ker(\mu)\ra \mu(X)/I\times \mu(X)/J$, by $\phi(x/\Ker(\mu))=(x/I,x/J)$ for all $x\in X$.
For each $x,y\in X$, if $x/\Ker(\mu)=y/\Ker(\mu)$, then $x*y,y*x\in\Ker(\mu)$
and so $\mu(x)*\mu(y)=\mu(x*y)=0=\mu(y*x)=\mu(y)*\mu(x)$. Hence by
(BCK3), $\mu(x)=\mu(y)$. Therefore, $\phi$ is a well defined
homomorphism. Thus, for each $x,y\in X$, if
$\phi(x/\Ker(\mu))=\phi(y/\Ker(\mu))$, then $(\mu(x)/I,\mu(x)/J)=(\mu(y)/I,\mu(y)/J)$,
so that $\mu(x)*\mu(y),\mu(y)*\mu(x)\in I\cap J$. Hence,
$\mu(x)*\mu(y),\mu(y)*\mu(x)\in \Ker(\mu)$. It follows that $x/\Ker(\mu)=y/\Ker(\mu)$,
which implies that $\phi$ is one-to-one. Clearly,
$\pi_1\circ \phi(X/\Ker(\mu))=\mu(X)/I$, and $\pi_2\circ \phi(X/\Ker(\mu))=\mu(X)/J$,
where $\pi_1:\mu(X)/I\times \mu(X)/J\ra \mu(X)/I$ and
$\pi_2:\mu(X)/I\times \mu(X)/J\ra \mu(X)/J$ are natural projection maps.
Since $X/\Ker(\mu)$ and $\mu(X)$ are isomorphic, then by
Theorem \ref{irr}(ii), $X/\Ker(\mu)$ is a subdirectly irreducible BCK-algebra
and so $\pi_1\circ \phi:X/\Ker(\mu)\ra \mu(X)/I$ or
$\pi_2\circ \phi:X/\Ker(\mu)\ra \mu(X)/J$ is an isomorphism. Without lost of generality,
we can assume that  $\pi_1\circ \phi$ is an isomorphism.
For any $x\in I$, $\pi_1(\phi(x/\Ker(\mu)))=\pi_1(\mu(x)/I,\mu(x)/J)=\mu(x)/I$.
Since $I$ is a  state ideal, then $\mu(x)\in I$ and hence
$\mu(x)/I=0/I$. It follows that  $x/\Ker(\mu)=0/\Ker(\mu)$ (since $\pi_1\circ\phi$ is an isomorphism)
and $x\in \Ker(\mu)$. Therefore, $I\s \Ker(\mu)$ and so
$\Ker(\mu)$ is a prime ideal of $X$.
\end{proof}

Now, let us to consider a commutative subdirectly irreducible state
morphism BCK-algebra $(X,\mu)$ satisfying the identity
$(x*y)\wedge (y*x)=0$.
\begin{prop}\label{condition}
Let $(X,\mu)$ be a subdirectly irreducible state-morphism BCK-algebra such that $X$ is commutative and
$(x*y)\wedge (y*x)=0$ for all $x,y\in X$. Then the following statements conditions hold:
\begin{itemize}
 \item[{\rm (i)}] For all $x\in X$, either $x\leq \mu(x)$ or $\mu(x)\leq x.$
 \item[{\rm (ii)}] $\mu(X)$ is a chain.
 \end{itemize}
\end{prop}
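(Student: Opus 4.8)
The plan is to reduce both parts to a single orthogonality lemma and then exploit subdirect irreducibility through its monolith. Since $(X,\mu)$ is subdirectly irreducible, Remark \ref{blyth1} provides a least nonzero state ideal $I_0$; consequently any two nonzero state ideals $A,B$ satisfy $I_0\s A\cap B$, so that $A\cap B=\{0\}$ forces $A=\{0\}$ or $B=\{0\}$. For (i) I would apply this to the pair $a:=x*\mu(x)$, $b:=\mu(x)*x$, and for (ii) to $p:=\mu(u)*\mu(v)$, $q:=\mu(v)*\mu(u)$, where $\mu(u),\mu(v)$ range over $\mu(X)$. In both cases the standing hypothesis $(x*y)\wedge(y*x)=0$ yields orthogonality: $a\wedge b=0$ (apply it to the pair $x,\mu(x)$) and $p\wedge q=0$ (apply it to $\mu(u),\mu(v)$).

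Next I would verify that the relevant principal ideals are genuine state ideals. Because $\mu$ is an idempotent homomorphism, $\mu(a)=\mu(x)*\mu(\mu(x))=\mu(x)*\mu(x)=0$, so $a\in\Ker(\mu)$, and likewise $b\in\Ker(\mu)$; for $y$ with $y*a^n=0$ one gets $\mu(y)=\mu(y*a^n)=\mu(0)=0$, so $\langle a\rangle\s\Ker(\mu)$ and $\mu(\langle a\rangle)=\{0\}\s\langle a\rangle$. For (ii), $p=\mu(u*v)$ is a fixed point of $\mu$, and $y*p^n=0$ gives $\mu(y)*p^n=\mu(y*p^n)=0$, so $\mu(y)\in\langle p\rangle$; hence $\langle p\rangle$ and $\langle q\rangle$ are state ideals. (By Proposition \ref{3.4.1} these ordinary principal ideals coincide with the state ideals generated by the corresponding elements, since $\mu(a)=0$ and $\mu(p)=p$.)

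The crux is the orthogonality lemma, valid in every commutative BCK-algebra: \emph{if $a\wedge b=0$ then $\langle a\rangle\cap\langle b\rangle=\{0\}$}. I would prove it by a descending induction on the exponent. Take $z\in\langle a\rangle\cap\langle b\rangle$ and a common $k$ with $z*a^k=0$ and $z*b^k=0$. Assuming $z*a^{j}=0$ for some $1\le j\le k$, set $w:=z*a^{j-1}$. Then $w*a=z*a^{j}=0$, so $w\le a$; moreover $w\le z$ gives $w*b^k\le z*b^k=0$, hence $w*b^k=0$. From $w\le a$ we obtain $w\wedge b\le a\wedge b=0$, so $w*(w*b)=w\wedge b=0$, i.e. $w\le w*b$; since always $w*b\le w$, antisymmetry forces $w*b=w$ and therefore $w*b^k=w$. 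Comparing the two values gives $w=w*b^k=0$, that is $z*a^{j-1}=0$. Iterating from $j=k$ down to $j=1$ yields $z=z*a^{0}=0$. The pivotal identity $w*b=w$ for $w\le a$ is the heart of the argument and the step I expect to be the main obstacle to spot; everything around it is bookkeeping.

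Finally I would assemble the pieces. In (i) the lemma gives $\langle a\rangle\cap\langle b\rangle=\{0\}$ for the two state ideals $\langle a\rangle,\langle b\rangle$, so the monolith property forces $\langle a\rangle=\{0\}$ or $\langle b\rangle=\{0\}$, i.e. $a=0$ or $b=0$; this is exactly $x\le\mu(x)$ or $\mu(x)\le x$. In (ii) the identical reasoning gives $p=0$ or $q=0$, i.e. $\mu(u)\le\mu(v)$ or $\mu(v)\le\mu(u)$; since $\mu(u),\mu(v)$ were arbitrary elements of $\mu(X)$, the subalgebra $\mu(X)$ is a chain.
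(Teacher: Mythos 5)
Your proof is correct, but it takes a genuinely different route from the paper's. The paper proves (i) by a case split on $\Ker(\mu)$: if $\Ker(\mu)=\{0\}$ then $\mu=\Id_X$ by Remark \ref{4.5.0} and the claim is trivial; otherwise Theorem \ref{irr}(ii) makes $\Ker(\mu)$ a subdirectly irreducible subalgebra, and a cited structure theorem (\cite[Thm 2.3.12]{chang}) says such an algebra satisfying $(x*y)\wedge(y*x)=0$ must be a chain, so the two elements $x*\mu(x),\,\mu(x)*x\in\Ker(\mu)$, whose meet is $0$, are comparable and hence one of them vanishes. For (ii) the paper invokes Theorem \ref{4.6} ($\Ker(\mu)$ is a prime state ideal) together with \cite[Thm II.8.13]{BCK} to conclude that $X/\Ker(\mu)\cong\mu(X)$ is a chain. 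You bypass all of this machinery: you use only the monolith from Remark \ref{blyth1} and a self-contained orthogonality lemma --- in a commutative BCK-algebra, $a\wedge b=0$ implies $\langle a\rangle\cap\langle b\rangle=\{0\}$ --- whose descending-induction proof is sound (the pivotal step $w*b=w$ for $w\le a$ follows correctly from $w\wedge b=w*(w*b)=0$ and $w*b\le w$ via (BCK3), and the padding to a common exponent $k$ and the monotonicity steps via (BCK7) are all legitimate). Your verifications that the relevant principal ideals are state ideals are also correct: $\langle x*\mu(x)\rangle$ and $\langle\mu(x)*x\rangle$ sit inside $\Ker(\mu)$ because $\mu$ is an idempotent homomorphism, while $\langle\mu(u)*\mu(v)\rangle$ and $\langle\mu(v)*\mu(u)\rangle$ are generated by fixed points of $\mu$, and in each case $\mu$ maps the ideal into itself. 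What your approach buys: it is elementary and self-contained (no appeal to Theorem \ref{irr}, Theorem \ref{4.6}, or the two external structure theorems), it needs no case distinction on $\Ker(\mu)$, and a single lemma disposes of (i) and (ii) uniformly. What the paper's approach buys: given the cited literature it is considerably shorter, and it makes explicit structural facts your argument leaves implicit, namely that $\Ker(\mu)$ is itself a chain and that $\mu(X)$ is a chain because it is a quotient by a prime state ideal.
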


\begin{proof}
(i) Since $(X,\mu)$ is subdirectly irreducible, then by Theorem \ref{irr}, $\Ker(\mu)=\{0\}$ or
$\Ker(\mu)\neq \{0\}$ and it is a subdirectly irreducible subalgebra of $X$. If $\Ker(\mu)=\{0\}$, then
by Remark \ref{4.5.0}, $\mu(x)=x$ for all $x\in X$. Let $\Ker(\mu)\neq\{0\}$.
Since $(x*y)\wedge (y*x)=0$ for all $x,y\in X$, then
by Theorem \ref{irr} and \cite[Thm 2.3.12]{chang}, $\Ker(X)$ must be a chain. Let $x\in X$. By Proposition \ref{4.5},
$x*\mu(x),\mu(x)*x\in \Ker(\mu)$ and so $(x*\mu(x))\wedge (\mu(x)*x)=0$ implies that $x*\mu(x)=0$ or $\mu(x)*x=0$. Therefore,
$x\leq \mu(x)$ or $\mu(x)\leq x$.

(ii) By the first isomorphism theorem, $X/\Ker(\mu)\cong \mu(X)$. Since $X$ is a commutative BCK-algebra and it satisfies the
identity $(x*y)\wedge (y*x)=0$, then by \cite[Thm II.8.13]{BCK} and Theorem \ref{4.6}, $X/\Ker(\mu)$ is
a chain. Hence, $\mu(X)$ is a chain.
\end{proof}

Note that if $(X,*,0)$ is a BCK-algebra such that $(X,\leq)$ is a lattice, it is called a {\it BCK-lattice}.
Then by \cite[Thm 2.2.6]{chang}, $X$ satisfies the identity $(x*y)\wedge (y*x)=0$.

\begin{defn}\label{adjoint}
A pair $(A,I)$ is called an {\it adjoint pair} of a BCK-algebra $X$, if $I$ is an ideal of $X$ and
$A$ is a subalgebra of $X$ satisfying
the following conditions:
\begin{itemize}
\item[(Ap1)]  $A\cap I=\{0\}$ and $\langle A\cup I\rangle=X;$
\item[(Ap2)] for each $x\in X$, there exists an element $a_x\in A$
such that $(x,a_x)\in \theta_{I}$ (we say that $a_x$ is a {\it component} of
$x$  in $A$ with respect to $I$).
\end{itemize}
\end{defn}

By Proposition \ref{4.5}(iii) and (iv), we conclude that if $\mu$ is a state-morphism operator on $X$,
then $(\mu(X),\Ker(\mu))$ satisfies (Ap1). In Theorem \ref{adjoint1}, a relation between
state-morphism operators and adjoint pairs in any BCK-algebras will be found.

\begin{prop}\label{com}
Let $(A,I)$ be an adjoint pair of $X$. Then, for all $x\in X$, $a_x$ is unique.
\end{prop}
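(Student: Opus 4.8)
The plan is to prove uniqueness of the component $a_x$ by exploiting the defining properties of an adjoint pair, specifically the fact that $A \cap I = \{0\}$ together with the antisymmetry of the BCK-order (BCK3). Suppose $x \in X$ has two components $a_x, a_x' \in A$, meaning $(x, a_x) \in \theta_I$ and $(x, a_x') \in \theta_I$. Since $\theta_I$ is an equivalence relation (indeed a congruence, as noted after Theorem~\ref{con BCK}), transitivity and symmetry give $(a_x, a_x') \in \theta_I$. Unwinding the definition of $\theta_I$, this means $a_x * a_x' \in I$ and $a_x' * a_x \in I$.

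The key observation is that both $a_x * a_x'$ and $a_x' * a_x$ also lie in $A$: since $A$ is a subalgebra of $X$ and $a_x, a_x' \in A$, the products $a_x * a_x'$ and $a_x' * a_x$ belong to $A$. Therefore $a_x * a_x' \in A \cap I = \{0\}$ and similarly $a_x' * a_x \in A \cap I = \{0\}$. Hence $a_x * a_x' = 0$ and $a_x' * a_x = 0$, which by (BCK3) forces $a_x = a_x'$. This completes the argument.

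I would carry out the steps in exactly this order: first invoke that $\theta_I$ is an equivalence relation to transfer the relation from $x$ to the pair $(a_x, a_x')$; second use the subalgebra property of $A$ to land the two differences inside $A$; third apply (Ap1) to collapse $A \cap I$ to $\{0\}$; and finally apply (BCK3) to conclude equality. The only subtle point worth stating carefully is that $\theta_I$ is genuinely an equivalence (so that we may compose $(x,a_x)$ and $(a_x',x)$), but this is already established in the preliminaries. I do not anticipate a genuine obstacle here; the statement is a direct consequence of the orthogonality condition $A \cap I = \{0\}$ in the definition of an adjoint pair, and the whole proof is short.
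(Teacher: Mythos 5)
Your proof is correct and follows exactly the paper's own argument: pass from $(x,a_x),(x,a_x')\in\theta_I$ to $(a_x,a_x')\in\theta_I$, note both differences lie in $A\cap I=\{0\}$ by the subalgebra property and (Ap1), and conclude by (BCK3). No discrepancies to report.
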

\begin{proof}
Let $x\in X$ and $a,b\in A$ be two components of $x$ in $A$. Then $(x,a),(x,b)\in\theta_{I}$ and so
$(a,b)\in\theta_{I}$. Hence, $a*b,b*a\in I$. Also, $a*b,b*a\in A$ (since $A$ is a subalgebra of $X$), so by (Ap1), $a*b,b*a\in I\cap A=\{0\}$.
Thus, by (BCK3), $a=b$. Therefore, $a_x$ is the only component of $x$ in $A$ with respect to $I$.
\end{proof}

%%%%%%%%%%%%%%%%%%%%%%%%%%%%%%%%%%%%%%%%%
Let $\mu$ and $\nu$ be two state-morphism operators on $X$ such that $\Ker(\mu)=\Ker(\nu)$ and ${\rm Im}(\mu)={\rm Im}(\nu)$.
 For any $x\in X$, we have
 $x*\mu(x),\mu(x)*x\in \Ker(\mu)=\Ker(\nu)$ and so $\nu(x*\mu(x))=0=\nu(\mu(x)*x)$. Since $\nu$ is a homomorphism and
 $\mu(x)\in {\rm Im}(\mu)={\rm Im}(\nu)$, then $\nu(\mu(x))=\mu(x)$ and so $\nu(x)*\mu(x)=0=\mu(x)*\nu(x)$. From (BCK3), we obtain that
 $\nu(x)=\mu(x)$ for all $x\in X$. Therefore, $\mu=\nu$. In Remark \ref{4.7.0}, we show that, there are state-morphism operators
 $\mu$ and $\nu$ on a BCK-algebra $X$ such that $\Ker(\mu)=\Ker(\nu)$, but $\mu\neq\nu$.

\begin{rmk}\label{4.7.0}
 Suppose that $I$ is a maximal ideal of $X$ such that $|X/I|=2$ and $2\leq |X-I|$. Let $a$ and $b$ be two distinct elements
 of $X-I$. Define $\mu_a:X\ra X$ and $\mu_b:X\ra X$ by
\begin{equation*}
\mu_a(x)=\left\{\begin{array}{ll}
0 & \text{if $x\in I$},\\
a & \text{if $x\in X-I$}.\\
\end{array} \right.
\hspace{1cm} \mu_b(x)=\left\{\begin{array}{ll}
0 & \text{if $x\in I$},\\
b & \text{if $x\in X-I$}.\\
\end{array} \right.
\end{equation*}

(1) If $x,y\in I$, then $x*y\in I$, so $\mu_a(x*y)=0=\mu_a(x)*\mu_b(y).$

(2) If $x\in I$ and $y\in X-I$, then $x*y\leq x$ and hence $x*y\in I$. It follows that $\mu_a(x*y)=0=0*\mu_a(y)=\mu_a(x)*\mu_b(y)$,

(3) If $x\in X-I$ and $y\in I$, then $x*y\in X-I$ (since $I$ is an ideal and $x*y\in I$ implies $x\in I$) and so
$\mu_a(x*y)=a=\mu_a(x)*0=\mu_a(x)*\mu_a(y)$,

(4) If $x,y\in X-I$, then by assumption, $x/I=y/I$ (since $|x/I|=2$), so $x*y\in I$. Thus,
$\mu_a(x*y)=0=a*a=\mu_a(x)*\mu_a(y)$.

By (1)-(4), we obtain that $\mu_a$ is a homomorphism. If $x\in I$, then $\mu_a(\mu_a(x))=\mu_a(x)=0$.
Also, if $x\in X-I$, then $\mu_a(\mu_a(x))=\mu_a(a)=a=\mu_a(x)$ (since $a\in X-I$), so $\mu_a$ is a state-morphism
operator. In a similar way, we can show that $\mu_b$ is a state-morphism operator. Clearly,
$\Ker(\mu_a)=I=\Ker(\mu_b)$, but $\mu_a\neq\mu_b$.
\end{rmk}

Note that if $X$ is a non-trivial positive implicative BCK-algebra and
$I$ is a maximal ideal of $X$, then $X/I$ is a simple positive implicative BCK-algebra and so
by \cite[Cor 3.1.7]{chang}, $|X/I|=2$. It follows that if $2\leq |X-I|$, then $X$ satisfies the conditions
in Remark \ref{4.7.0}.
%%%%%%%%%%%%%%%%%%%%%%%%%%%%%

\begin{thm}\label{adjoint1}
There is a one-to-one correspondence between adjoint pairs of $X$ and state-morphism operators on $X$.
\end{thm}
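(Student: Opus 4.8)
The plan is to exhibit two mutually inverse maps between the set of adjoint pairs of $X$ and the set of state-morphism operators on $X$, and then to check that each composite is the identity. In one direction, to a state-morphism operator $\mu$ I associate the pair $(\mu(X),\Ker(\mu))$. Here $\mu(X)$ is a subalgebra and $\Ker(\mu)$ is an ideal by Proposition \ref{3.2}(iv),(iii), and condition (Ap1) holds since $\mu(X)\cap \Ker(\mu)=\{0\}$ by Proposition \ref{3.2}(v) and $\langle \mu(X)\cup \Ker(\mu)\rangle =X$ by Proposition \ref{4.5}(ii). For (Ap2), given $x\in X$ I take $a_x:=\mu(x)\in \mu(X)$; by Proposition \ref{4.5}(i) both $x*\mu(x)$ and $\mu(x)*x$ lie in $\Ker(\mu)$, so $(x,\mu(x))\in \theta_{\Ker(\mu)}$. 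Hence $(\mu(X),\Ker(\mu))$ is an adjoint pair.

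In the other direction, to an adjoint pair $(A,I)$ I associate the map $\mu_{(A,I)}\colon X\ra X$ sending $x$ to its unique component $a_x\in A$ with respect to $I$; this is well defined by Proposition \ref{com}. The key step is to verify that $\mu_{(A,I)}$ is a state-morphism operator. For the homomorphism property, from $(x,a_x),(y,a_y)\in \theta_I$ and the fact that $\theta_I$ is a congruence I obtain $(x*y,\,a_x*a_y)\in \theta_I$; since $A$ is a subalgebra, $a_x*a_y\in A$, so $a_x*a_y$ is a component of $x*y$, and uniqueness forces $a_{x*y}=a_x*a_y$, i.e. $\mu_{(A,I)}(x*y)=\mu_{(A,I)}(x)*\mu_{(A,I)}(y)$. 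Idempotency is similar: for any $a\in A$, reflexivity gives $(a,a)\in \theta_I$ with $a\in A$, so $a$ is its own component and $\mu_{(A,I)}(a)=a$; applying this to $a=a_x$ yields $\mu_{(A,I)}^2=\mu_{(A,I)}$.

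Finally I would check that the two assignments are mutually inverse. Starting from $\mu$, passing to $(\mu(X),\Ker(\mu))$ and back produces the operator sending $x$ to the unique component of $x$ in $\mu(X)$ with respect to $\Ker(\mu)$; since $\mu(x)$ is such a component (as shown above), uniqueness recovers $\mu$. Conversely, starting from $(A,I)$, forming $\mu:=\mu_{(A,I)}$ and then $(\mu(X),\Ker(\mu))$, I would show $\mu(X)=A$ and $\Ker(\mu)=I$. Indeed every value $\mu(x)=a_x$ lies in $A$, while each $a\in A$ satisfies $\mu(a)=a$ (being its own component), so $\mu(X)=A$; and $\mu(x)=0$ iff $(x,0)\in \theta_I$ iff $x\in I$ (using $x*0=x$, $0*x=0$, and $0\in A$), so $\Ker(\mu)=I$.

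I expect the main obstacle to be the homomorphism property of $\mu_{(A,I)}$, where the adjoint-pair axioms must genuinely combine: the uniqueness of components (furnished by (Ap1) through Proposition \ref{com}) together with the congruence structure of $\theta_I$. The remaining verifications, including both inverse computations, are routine bookkeeping once this step is in place.
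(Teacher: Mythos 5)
Your proposal is correct and takes essentially the same route as the paper: both directions use the assignment $\mu\mapsto(\mu(X),\Ker(\mu))$ and the component map $(A,I)\mapsto\mu_{(A,I)}$ with $\mu_{(A,I)}(x)=a_x$, and the homomorphism and idempotency verifications are identical. The only (harmless) difference is in the final bookkeeping: the paper concludes that the two composites are the identity by invoking its earlier observation that a state-morphism operator is determined by its kernel and image, whereas you check both composites directly via uniqueness of components; your supporting citations (Proposition 3.2(iii)--(v) and Proposition 4.5(i),(ii)) are actually more precise than the paper's reference to ``Proposition 4.5(iii) and (iv)'', which do not exist.
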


\begin{proof}
Let $\mu:X\ra X$ be a state-morphism operator on $X$. We show that $(\mu(X),\Ker(\mu))$ is an adjoint pair of $X$.
By Proposition \ref{4.5}(iii) and (iv), (Ap1) holds. Let $A=\mu(X)$ and $x$ be an element of $X$.
Then $\mu(x)\in A$ and clearly, $x*\mu(x),\mu(x)*x\in\Ker(\mu)$ (since $\mu^2=\mu$).
Hence, $(x,\mu(x))\in \theta_{I}$. That is, for each $x\in X$, $\mu(x)$ is a component of $x$ in $A$ and so
(Ap2) holds. Therefore, $(\mu(X),\Ker(\mu))$ is an adjoint pair of $X$.

Conversely, let $(A,I)$ be an adjoint pair of $X$. Define $\mu_{I,A}:X\ra X$,
by $\mu_{I,A}(x)=a_{x}$ for all $x\in X$. By Proposition \ref{com},
$\mu_{I,A}$ is well defined. Let $x,y\in X$. Then $(x,a_x)\in\theta_{I}$ and $(y,a_y)\in \theta_I$ and so $(x*y,a_x*a_y)\in\theta_I$. By
$a_x*a_y\in A$, we conclude that $a_x*a_y$ is a component of $x*y$ in  $A$, hence by Proposition \ref{com},  $\mu_{I,A}(x*y)=a_{x*y}=a_x*a_y=\mu_{I,A}(x)*\mu_{I,A}(y)$.
Thus, $\mu_{I,A}$ is a homomorphism. Moreover, for any $a\in A$, $a*a=0\in I$ and hence $\mu_{I,A}(a)=a_a=a$.
It follows that $\mu_{I,A}(\mu_{I,A}(x))=\mu_{I,A}(x)$ for all $x\in X$.
Therefore, $\mu_{I,A}$ is a state-morphism operator on $X$. Let us denote by
${\rm Ad}(X)$ and ${\rm SM}(X)$ the set of all adjoint pairs and the set of all state-morphism operators on $X$, respectively.
Define $f:{\rm Ad}(X)\ra {\rm SM}(X)$, by $f(A,I)=\mu_{I,A}$ and $g:{\rm SM}(X)\ra {\rm Ad}(X)$ by $g(\mu)=(\mu(X),\Ker(\mu))$.
Since $\Ker(\mu_{I,A})=I$ and ${\rm Im}(\mu_{I,A})=A$ for all $(A,I)\in Ad(X)$, then by the
paragraph just after Proposition \ref{com}, we conclude that $f\circ g=\Id_{\rm SM(X)}$ and $g\circ f=\Id_{\rm Ad(X)}$.
\end{proof}

%%%%%%%%%%%%%%%%%%%%%%%%%%%%%%%%%%%%%%%%%%%%%%%%%%%%%%%%%%
In the sequel, we want to construct a state BCK-algebra from a state-morphism. Let $m:X\ra [0,1]$ be a state-morphism. Then
$m$ is a homomorphism from $X$ into the BCK-algebra $([0,1],*_{\mathbb{R}},0)$ and so $X/\Ker(m)\cong m(X)$. Let $B=m(X)$ and
$C=\Ker(m)$. Then $B$ and $C$ are BCK-algebras. Consider the BCK-algebra $B\times C$. Let $A=\{(b,0)|b\in B\}$ and
$I=\{(0,c)|c\in C\}$. Then $I$ is an ideal of $B\times C$ and $A$ is a subalgebra of $B\times C$. Also,

(1) $A\cap I=\emptyset$.

(2) For each $(x,y)\in B\times C$, we have $((x,y)*(x,0))*(0,y)=(0,0)$, hence by Theorem \ref{2.2},
$(x,y)\in \langle A\cup I\rangle$. It follows that $B\times C=\langle A\cup I\rangle$.

(3) For each $(x,y)\in B\times C$, we have $(x,y)*(x,0)=(0,y)\in I$ and $(x,0)*(x,y)=(0,0)\in I$. Thus,
$(x,y)/I=(x,0)/I$.

So by Theorem \ref{adjoint1}, the map $\mu:B\times C\ra B\times C$ defined by
 $\mu(x,y)=(x,0)$ is a state-morphism operator on $B\times C$.
 Clearly, $\Ker(\mu)=I$ and ${\rm Im}(\mu)=A$. Note that if $m_\mu:B\times C\ra [0,1]$
 is the state-morphism induced by $\mu$ (see the paragraph before Remark \ref{4.5.0}),
 then $(B\times C)/\Ker(m_\mu)\cong B\cong {\rm Im}(m)$ and $\Ker(m_\mu)=C\cong \Ker(m)$.

%%%%%%%%%%%%%%%%%%%%%%%%%%%%%%%%%%%%%%%%%%%%%%%%%%%%%%%%%%%%%%%

\begin{defn}
Let $I$ be an ideal of $X$ and $\pi_I:X\ra X/I$ be the canonical projection.
Then $I$ is called a {\it retract} ideal if there exists
a homomorphism $f:X/I\ra X$ such that $\pi_I\circ f=\Id_{X/I}$ (the identity map on $X/I$).
\end{defn}

\begin{thm}\label{retract}
 An ideal $I$ of $X$ is a retract ideal if and only if there exists a subalgebra $A$
 of $X$ such that $(A,I)$ forms an adjoint pair.
\end{thm}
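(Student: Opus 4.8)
The plan is to prove the two implications separately, using the correspondence of Theorem \ref{adjoint1} to pass between an adjoint pair and a homomorphism on the quotient.

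For the implication that an adjoint pair yields a retract ideal, suppose $(A,I)$ is an adjoint pair and let $\mu:=\mu_{I,A}$ be the associated state-morphism operator from Theorem \ref{adjoint1}, so that $\Ker(\mu)=I$, ${\rm Im}(\mu)=A$, and $\mu(x)=a_x$ for each $x\in X$. I would define $f:X/I\ra X$ by $f(x/I):=\mu(x)$. Since $\Ker(\mu)=I$, this is well defined: if $x/I=y/I$ then $x*y,y*x\in I=\Ker(\mu)$, and because $\mu$ is a homomorphism $\mu(x)*\mu(y)=\mu(x*y)=0=\mu(y*x)=\mu(y)*\mu(x)$, whence $\mu(x)=\mu(y)$ by (BCK3). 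As $\mu$ is a homomorphism, so is $f$, and $\pi_I\cc f(x/I)=\mu(x)/I=x/I$ because $(x,\mu(x))\in\theta_I$ by (Ap2). Thus $\pi_I\cc f=\Id_{X/I}$ and $I$ is a retract ideal.

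For the converse, suppose $I$ is a retract ideal witnessed by a homomorphism $f:X/I\ra X$ with $\pi_I\cc f=\Id_{X/I}$. The natural candidate is $A:=f(X/I)$, which is a subalgebra of $X$ as a homomorphic image of $X/I$. To check (Ap1), if $z\in A\cap I$ I write $z=f(x/I)$; then $x/I=\pi_I(f(x/I))=\pi_I(z)=0/I$ since $z\in I$, so $z=f(0/I)=0$, giving $A\cap I=\{0\}$. For each $x\in X$ I set $a_x:=f(x/I)\in A$; the retraction identity gives $\pi_I(a_x)=x/I$, i.e. $(x,a_x)\in\theta_I$, which is exactly (Ap2) and also yields $x*a_x\in I$. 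Since $(x*a_x)*(x*a_x)=0$ by (BCK5), with $a_x\in A$ and $x*a_x\in I$, Theorem \ref{2.2} shows $x\in\langle A\cup I\rangle$; hence $\langle A\cup I\rangle=X$, and $(A,I)$ is an adjoint pair.

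I expect the only slightly delicate point to be the equality $\langle A\cup I\rangle=X$ in the converse, where one must exhibit an explicit termination of the generation process of Theorem \ref{2.2}; the choice of successive divisors $a_x\in A$ and then $x*a_x\in I$ settles it. Everything else is routine bookkeeping with the retraction identity $\pi_I\cc f=\Id_{X/I}$ and the antisymmetry law (BCK3).
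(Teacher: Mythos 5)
Your proof is correct and follows essentially the same route as the paper's: the same witnesses in both directions ($A=f(X/I)$ for the retract-to-adjoint implication, and the component map $x/I\mapsto a_x=\mu_{I,A}(x)$ for the converse) and the same verifications of (Ap1), (Ap2), and the retraction identity. The only differences are cosmetic improvements: you route the adjoint-pair-to-retract direction through Theorem \ref{adjoint1} (so well-definedness and the homomorphism property of $f$ come for free from $\Ker(\mu_{I,A})=I$), where the paper re-verifies these directly, and you make explicit the Theorem \ref{2.2} computation $(x*a_x)*(x*a_x)=0$ giving $\langle A\cup I\rangle=X$, a step the paper states only loosely.
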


\begin{proof}
Let $I$ be a retract ideal of $X$. Then there exists a homomorphism $f:X/I\ra X$
such that $\pi_I\circ f=\Id_{X/I}$. Put $A=f(X/I)$.
Since $f$ is a homomorphism, then $A$ is a subalgebra of $X$. Let $x\in I\cap A$.
Then there exists $a\in X$ such that $f(a/I)=x$, so
$a/I=\pi_I\circ f(a/I)=\pi_I(x)=x/I$. From $x\in I$, we get that $a\in I$ and $a/I=0/I$, whence $x=f(0/I)=0$. Now, let $x\in X$.
Then $f(x/I)=a$ for some $a\in A$. It follows that $x/I=\pi_I\circ f(x/I)=\pi_I(a)=a/I$,
which implies that $x*a\in I$. Hence, $a\in \langle A\cup I\rangle$
and $a$ is a component of $x$ in $A$ with respect to $I$. Therefore, $(A,I)$ is an adjoint pair of $X$. Conversely, let $(A,I)$ be an adjoint pair of $X$. Define $f:X/I\ra X$ by $f(x/I)=a_x$ for all $x\in X$ (see Definition \ref{adjoint}).
If $x/I=y/I$ for some $x,y\in X$,
then $(x,y)\in\theta_I$ and $(x,a_x)\in \theta_I,$ which yields $a_x$ is a component of $y$ in $A$. By Proposition \ref{com}, we get that
$a_y=a_x$. Thus, $f$ is well defined. In a similar way,
we can show that $f$ is a homomorphism.
It follows from $(x,a_x)\in\theta_I$ that $\pi_I\circ f(x/I)=\pi_I(a_x)=a_{x}/I=x/I$.
Therefore, $I$ is a retract ideal of $X$.
\end{proof}

\begin{cor}\label{ret-state}
There is a one-to-one correspondence between retract ideals and state-morphism operators of $X$.
\end{cor}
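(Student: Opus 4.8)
The plan is to deduce this correspondence by composing the two bijective passages already established: the one-to-one correspondence between adjoint pairs and state-morphism operators (Theorem \ref{adjoint1}) and the identification in Theorem \ref{retract} of retract ideals as exactly those ideals admitting a complementary subalgebra. To each state-morphism operator $\mu$ I would assign the retract ideal $\Ker(\mu)$, and to each retract ideal I would assign the state-morphism operator determined by its retraction; the substance of the argument is then to check that these two assignments are mutually inverse.

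For the forward direction, start from a state-morphism operator $\mu$. By Theorem \ref{adjoint1} the pair $({\rm Im}(\mu),\Ker(\mu))$ is an adjoint pair of $X$, so $\Ker(\mu)$ admits a complementary subalgebra, and by Theorem \ref{retract} this is precisely the assertion that $\Ker(\mu)$ is a retract ideal. The associated retraction is the homomorphism $f_\mu:X/\Ker(\mu)\ra X$ given by $f_\mu(x/\Ker(\mu))=\mu(x)$; it is well defined since $x*y,y*x\in\Ker(\mu)$ forces $\mu(x)=\mu(y)$ by (BCK3), and it satisfies $\pi_{\Ker(\mu)}\circ f_\mu=\Id_{X/\Ker(\mu)}$ because $\mu^2=\mu$.

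For the backward direction, start from a retract ideal $I$ with retraction $f:X/I\ra X$. By Theorem \ref{retract} the pair $(f(X/I),I)$ is an adjoint pair, and feeding it into Theorem \ref{adjoint1} yields the state-morphism operator $\mu_{I,f(X/I)}$, which is nothing but $f\circ\pi_I$ and has kernel $I$. It then remains to verify the two round trips. Passing $\mu\mapsto\Ker(\mu)\mapsto f_\mu\circ\pi_{\Ker(\mu)}$ returns $\mu$, since $f_\mu\circ\pi_{\Ker(\mu)}(x)=\mu(x)$; conversely, passing a retraction $f$ to $\mu=f\circ\pi_I$ and back recovers $I=\Ker(\mu)$ and, via the uniqueness of components (Proposition \ref{com}), the original component map and hence $f$ itself, using $\pi_I\circ f=\Id_{X/I}$.

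I expect the main obstacle to be the careful bookkeeping of these round trips rather than any new estimate. Everything reduces to the uniqueness of the component $a_x$ (Proposition \ref{com}) together with the already-proven inverse relations $f\circ g=\Id_{{\rm SM}(X)}$ and $g\circ f=\Id_{{\rm Ad}(X)}$ of Theorem \ref{adjoint1}. In short, once the correspondence is rewritten as the chain ${\rm SM}(X)\leftrightarrow{\rm Ad}(X)\leftrightarrow\{\text{retract ideals}\}$, the statement follows by transporting the bijection of Theorem \ref{adjoint1} along the equivalence of Theorem \ref{retract}, so no ideal-theoretic computation beyond those two theorems is required.
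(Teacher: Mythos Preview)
Your high-level strategy---composing the bijection of Theorem~\ref{adjoint1} with the characterization of Theorem~\ref{retract}---is exactly what the paper does; its own proof is a one-line citation of those two results. However, your detailed verification of the round trips exposes a genuine problem that the paper's terse proof conceals.

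The backward assignment ``retract ideal $I\mapsto$ state-morphism operator'' is not well-defined: it depends on the \emph{choice} of retraction $f$, and a single retract ideal can admit several inequivalent retractions. Proposition~\ref{com} does not rescue this step: it gives uniqueness of the component $a_x$ only once the full adjoint pair $(A,I)$ is fixed, not uniqueness of $A$ (equivalently of $f$) given $I$ alone. Indeed, Remark~\ref{4.7.0} in the paper explicitly constructs two distinct state-morphism operators $\mu_a\neq\mu_b$ with $\Ker(\mu_a)=\Ker(\mu_b)=I$, so the map $\mu\mapsto\Ker(\mu)$ is not injective and there is no bijection between the set of retract \emph{ideals} and the set of state-morphism operators as literally stated. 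What Theorems~\ref{adjoint1} and~\ref{retract} together actually yield is a bijection between state-morphism operators and pairs $(I,f)$ consisting of an ideal with a specified retraction (equivalently, adjoint pairs $(A,I)$). Your write-up silently slides between ``retract ideal $I$'' and ``retract ideal $I$ with retraction $f$''; once you keep $f$ as part of the data, your argument goes through, but then the statement you are proving is just Theorem~\ref{adjoint1} rephrased, not a bijection with bare ideals.
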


\begin{proof}
The proof is a straightforward  consequence of Theorem \ref{adjoint1} and \ref{retract}.
\end{proof}

\begin{defn}\label{4.7}\cite[Def II.8.8]{Burris}
A state BCK-algebra $(X,\mu)$ is called
\begin{itemize}
\item {\it simple} if ${\rm Con}(X,\mu)=\{\Delta,\nabla\}$.
\item {\it semisimple} if the intersection of all maximal congruence relations of $(X,\mu)$ is $\Delta$.
\end{itemize}
\end{defn}

By Theorem \ref{con-ideal}, we conclude that $(X,\mu)$ is simple if and only if it has exactly, two state ideals ($\{0\}$ and $X$) and it is
semisimple if and only if the intersection of all maximal state ideals of $(X,\mu)$ is the zero ideal.
\begin{thm}\label{4.10}
 Let $(X,\mu)$ be a state-morphism BCK-algebra. Then the following hold:
 \begin{itemize}
  \item[{\rm (i)}] $\mu(X)$ is a simple (semisimple) subalgebra of $X$ if and only if $\Ker(\mu)\in \Max(X)$ $(\Rad(X)\s\Ker(\mu))$.
  \item[{\rm (ii)}] $(X,\mu)$ is a simple state-morphism BCK-algebra if and only if $X$ is a simple BCK-algebra.
  \item[{\rm (iii)}] If $\mu(X)$ is a semisimple subalgebra of $X$, then the intersection of all maximal state ideals of $(X,\mu)$ is a subset of $\Ker(\mu)$.
  \item[{\rm (iv)}] If $X$ is a non-trivial bounded BCK-algebra such that $\mu(1)=1$ and $(X,\mu)$ is a semisimple state BCK-algebra, then  $\mu$ is the identity map.
 \end{itemize}
\end{thm}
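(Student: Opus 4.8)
The backbone of the argument is the isomorphism $\mu(X)\cong X/\Ker(\mu)$ (the map $\mu:X\ra\mu(X)$ is a surjective homomorphism with kernel $\Ker(\mu)$) together with the lattice isomorphism between the ideals of $X/\Ker(\mu)$ and the ideals of $X$ containing $\Ker(\mu)$. I would record at the outset an elementary lemma used repeatedly below: every ideal $M$ of $X$ with $\Ker(\mu)\s M$ is a state ideal; indeed, for $x\in M$, Proposition \ref{4.5}(i) gives $\mu(x)*x\in\Ker(\mu)\s M$, and since $x\in M$ the ideal property yields $\mu(x)\in M$. The simple case of (i) is then immediate: under the above correspondence $\mu(X)$ is simple iff the only ideals of $X$ containing $\Ker(\mu)$ are $\Ker(\mu)$ and $X$, i.e. iff $\Ker(\mu)\in\Max(X)$.

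For the semisimple half of (i) I would pass to the quotient: $\Rad(\mu(X))\cong\Rad(X/\Ker(\mu))=L/\Ker(\mu)$, where $L:=\bigcap\{M\in\Max(X):\Ker(\mu)\s M\}$, so that $\mu(X)$ is semisimple iff $L=\Ker(\mu)$. The forward implication is then easy, since $\Rad(X)=\bigcap\Max(X)\s L=\Ker(\mu)$. The hard part, and the main obstacle of the whole theorem, is the converse: from $\Rad(X)\s\Ker(\mu)$ I must force $L=\Ker(\mu)$, and because $L$ can exceed $\Rad(X)$, I have to control the maximal ideals of $X$ that do \emph{not} contain $\Ker(\mu)$. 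The route I would take is to establish the key lemma that a maximal ideal $M$ with $\Ker(\mu)\not\s M$ must satisfy $\mu(X)\s M$; with this in hand, every maximal ideal either lies above $\Ker(\mu)$ (these are precisely the pullbacks of the maximal ideals of $\mu(X)$) or contains $\mu(X)$ and is thus irrelevant to $\Rad(\mu(X))$, and then $\Rad(X)\s\Ker(\mu)$ yields $L\cap\mu(X)=\Rad(\mu(X))=\{0\}$. Since $L$, as an intersection of the state ideals $M\supseteq\Ker(\mu)$, is itself a state ideal, $\mu(L)\s L\cap\mu(X)=\{0\}$ gives $L\s\Ker(\mu)$, whence $L=\Ker(\mu)$. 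Proving that key lemma is where I expect the genuine difficulty to concentrate.

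Parts (ii) and (iii) are then short. For (ii), if $X$ is simple its only ideals $\{0\}$ and $X$ are both state ideals, so $(X,\mu)$ has exactly two state ideals and is simple by Theorem \ref{con-ideal}; conversely, if $(X,\mu)$ is simple then the state ideal $\Ker(\mu)$ is $\{0\}$ or $X$, so by Remark \ref{4.5.0} either $\mu=\Id_X$ or $\mu=O_X$, and in both cases every ideal of $X$ is a state ideal, forcing $X$ to have only the ideals $\{0\}$ and $X$. For (iii), the computation above shows that semisimplicity of $\mu(X)$ is exactly $L=\Ker(\mu)$; each $M\in\Max(X)$ with $\Ker(\mu)\s M$ is a state ideal (by the first-paragraph lemma) and, being maximal among all ideals, is a maximal state ideal, so $\MaxS(X,\mu)\supseteq\{M\in\Max(X):\Ker(\mu)\s M\}$ and therefore $\bigcap\MaxS(X,\mu)\s L=\Ker(\mu)$. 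Note that (iii) uses only the easy equivalence, not the obstructed converse.

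For (iv) the plan is to show $\Ker(\mu)=\{0\}$, which by Remark \ref{4.5.0} gives $\mu=\Id_X$. Since $(X,\mu)$ is semisimple, $\bigcap\MaxS(X,\mu)=\{0\}$, so it suffices to prove $\Ker(\mu)\s M$ for each maximal state ideal $M$. Fixing such an $M$, the quotient $(X/M,\overline{\mu})$ is a simple state-morphism BCK-algebra (its state ideals correspond to the state ideals of $X$ above $M$, of which there are only two), so by part (ii) $X/M$ is a simple BCK-algebra and hence, by Corollary \ref{4.5.1}, $\overline{\mu}=\Id_{X/M}$ or $\overline{\mu}=O_{X/M}$. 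Because $X$ is bounded and $M$ is proper we have $1\notin M$, so $1/M\neq 0/M$; as $\overline{\mu}(1/M)=\mu(1)/M=1/M\neq 0/M$, the zero alternative is excluded and $\overline{\mu}=\Id_{X/M}$. Thus $x/M=\mu(x)/M$ for all $x$, and taking $x\in\Ker(\mu)$ gives $x=x*\mu(x)\in M$ (using $\mu(x)=0$ and $\overline{\mu}=\Id_{X/M}$). Hence $\Ker(\mu)\s\bigcap\MaxS(X,\mu)=\{0\}$, as required.
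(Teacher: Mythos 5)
Your parts (ii)--(iv), the simple half of (i), and the forward implication of the semisimple half of (i) are all correct, and they essentially coincide with the paper's own arguments (your (iv) is in fact more careful than the paper's, since you obtain simplicity of $X/M$ from part (ii) applied to the quotient $(X/M,\overline{\mu})$ rather than treating a maximal state ideal as if it were a maximal ideal of $X$). The genuine gap is exactly where you predicted the difficulty: the converse of the semisimple half of (i). The key lemma you propose there --- that every $M\in\Max(X)$ with $\Ker(\mu)\not\s M$ must satisfy $\mu(X)\s M$ --- is false, and you do not prove it. Counterexample: let $X=\{0,1\}\times\{0,1\}$ be the square of the two-element BCK-algebra and $\mu(a,b)=(a,a)$ the diagonal state-morphism; then $M=\{0,1\}\times\{0\}$ is a maximal ideal of $X$, $\Ker(\mu)=\{0\}\times\{0,1\}\not\s M$, and yet $(1,1)\in\mu(X)\setminus M$. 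So your dichotomy ``$M$ lies above $\Ker(\mu)$ or $M$ contains $\mu(X)$'' fails, and with it your derivation of $L\cap\mu(X)=\{0\}$.

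The dichotomy that is actually true, and which repairs the proof, is: for every $M\in\Max(X)$, either $\mu(X)\s M$ or $M\cap\mu(X)\in\Max(\mu(X))$. It rests on the observation that any subalgebra $B$ of a simple BCK-algebra $S$ is itself simple: for $0\neq b\in B$ and $t\in B$ we have $t\in\langle b\rangle_S=S$, so by Theorem \ref{2.2} $t*b^n=0$ for some $n$, and this computation takes place entirely inside $B$, whence $\langle b\rangle_B=B$. Now if $\mu(X)\not\s M$, the natural map $\mu(X)\ra X/M$ has kernel $M\cap\mu(X)$, so $\mu(X)/(M\cap\mu(X))$ embeds as a subalgebra of the simple algebra $X/M$; it is therefore simple, and $M\cap\mu(X)$ is a maximal ideal of $\mu(X)$. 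In either case $\Rad(\mu(X))\s M$, so $\Rad(\mu(X))\s\Rad(X)\cap\mu(X)$ holds unconditionally. If now $\Rad(X)\s\Ker(\mu)$, then $\Rad(\mu(X))\s\Ker(\mu)\cap\mu(X)=\{0\}$ by Proposition \ref{3.2}(v), i.e.\ $\mu(X)$ is semisimple. (For comparison: the paper dismisses all of (i) as ``straightforward'' from the first isomorphism theorem, so you are right that this converse is where the real content lies; the missing ingredient is the inclusion $\Rad(\mu(X))\s\Rad(X)\cap\mu(X)$ above, not your lemma.)
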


\begin{proof}
(i) Let $(X,\mu)$ be a state-morphism BCK-algebra. Then by the first isomorphism theorem,
$X/\Ker(\mu)$ and $\mu(X)$ are isomorphic (as BCK-algebras), whence
the proof of (i) is straightforward.

(ii) Let $(X,\mu)$ be a simple state-morphism BCK-algebra. By Proposition \ref{3.2}(iii), $\Ker(\mu)$ is a state ideal of
$(X,\mu)$ and so $\Ker(\mu)=\{0\}$ or
$\Ker(\mu)=X$. By Corollary \ref{4.5.1}, we obtain that $\mu=\Id_X$ or $\mu(x)=0$ for all $x\in X$. However, each ideal of $X$ is a state ideal, so by assumption, $X$ must have exactly two ideals. That is, $X$ is a simple BCK-algebra. The proof of the converse is clear. In fact,
any simple BCK-algebra $X$, has exactly two ideals, $X$ and $\{0\}$, which are state ideals.

(iii) Let $\mu(X)$ be a semisimple subalgebra of $X$. Since $X/\Ker(\mu)\cong \mu(X)$, we get that
$\Rad(X/\mu(X))=\{0/\mu(X)\}$ and so $\bigcap\{I/\Ker(\mu) \mid  \Ker(\mu)\s I\in \MaxS(X)\}=\{0/\Ker(\mu)\}$, which implies that
$\bigcap\{I \mid  \Ker(\mu)\s I\in \MaxS(X)\}\s \Ker(\mu)$. Let
$H$ be a maximal ideal of $X$ containing $\Ker(\mu)$. Since $\mu(x)*x\in \Ker(\mu)$, for
each $x\in H$, then we have $\mu(x)*x\in H,$ and so $\mu(x)\in H$ for all $x\in X$.
Thus, $H$ is a state ideal of $(X,\mu)$.  By summing up the above results, we have
$$
\bigcap\{I \mid  \mbox{ $I$ is a state ideal of $(X,\mu)$}\}\s \bigcap\{I \mid  \Ker(\mu)\s I\in \MaxS(X)\}\s \Ker(\mu).
$$

(iv) Let $I$ be a maximal state ideal of $X$. Then we define $\nu:X/I\ra X/I$ by $\nu(x/I)=\mu(x)/I$
for all $x\in X$. If $x/I=y/I$ for some $x,y\in X$, then $x*y,y*x\in I$. By assumption,
$\mu(x)*\mu(y)\in I$ and $\mu(y)*\mu(x)\in I$, hence $\mu(x)/I=\mu(y)/I$, which implies that
$\nu(x/I)=\nu(y/I)$. Clearly, $\nu$ is a state operator on the BCK-algebra $X/I$. Since $I$ is a maximal
ideal, then $X/I$ is a simple BCK-algebra, so by Corollary \ref{4.5.1}, $\nu=\Id_{X/I}$ or $\nu=0$.
If $\nu=0$, then $\mu(x)\in I$ for all $x\in X$. It follows that $1\in I$, which is a contradiction. So, $\nu(x/I)=x/I$
for all $x\in X$. Hence, $\mu(x)*x,x*\mu(x)\in I$ for all $x\in X$. Since $I$ is an arbitrary maximal state ideal of $(X,\mu)$, then by  Proposition \ref{4.2},
we conclude that $\Ker(\mu)\s\bigcap\{I \mid I \in \MaxS(X)\}$. Now, let $(X,\mu)$ be semisimple. Then
$\bigcap\{I \mid I\in \MaxS(X)\}=\{0\}$ and so, $\Ker(\mu)=\{0\}$. By Corollary \ref{4.5.0},
$\mu=\Id_X$.
\end{proof}

Now we show a relation between state-morphism MV-algebras and state-morphism BCK-algebras.

\begin{thm}\label{4.12}
Let $(X,*,0)$ be a bounded commutative BCK-algebra and $\mu:X\ra X$ be a state-morphism operator such that
$\mu(1)=1.$ Then $(X,\mu)$ is a state-morphism MV-algebra.
\end{thm}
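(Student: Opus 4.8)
The plan is to prove directly that $\mu$ is an endomorphism of the MV-algebra $(X,\oplus,',0)$, since the other half of the definition of a state-morphism operator, namely idempotency $\mu^2=\mu$, is already part of the hypothesis. The key observation is that, by the construction recalled just before Theorem \ref{3.8}, the MV-operations on $X$ are defined by $x'=Nx=1*x$ and $x\oplus y=N(Nx*y)$; thus every MV-operation is a term in the BCK-operation $*$ together with the two constants $0$ and $1$. Since $\mu$ is a BCK-homomorphism it preserves $*$ and $0$, and by hypothesis $\mu(1)=1$, so $\mu$ must commute with any such term. This is exactly what is needed.

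First I would verify $\mu(x')=\mu(x)'$ for all $x\in X$: using $x'=1*x$ and the homomorphism property,
$$\mu(x')=\mu(1*x)=\mu(1)*\mu(x)=1*\mu(x)=\mu(x)'.$$
Next I would check that $\mu$ preserves $\oplus$. Writing $x\oplus y=N(Nx*y)=1*((1*x)*y)$ and applying the homomorphism property three times, together with $\mu(1)=1$, I obtain
$$\mu(x\oplus y)=1*\big((1*\mu(x))*\mu(y)\big)=N(N\mu(x)*\mu(y))=\mu(x)\oplus\mu(y).$$
Preservation of $0$ is automatic since $\mu(0)=0$. Hence $\mu$ respects $\oplus$, $'$ and $0$, so it is an MV-endomorphism; combined with $\mu^2=\mu$, this says precisely that $\mu$ is a state-morphism operator on $(X,\oplus,',0)$, i.e. $(X,\mu)$ is a state-morphism MV-algebra.

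I do not expect a serious obstacle here: the entire content is that the passage from the bounded commutative BCK-algebra to its associated MV-algebra expresses $\oplus$ and $'$ purely through $*$, $0$ and $1$, all three of which $\mu$ preserves. The only points requiring some care are to use the correct MV-term for $x\oplus y$ and to invoke $\mu(1)=1$ at exactly the steps where the outer and inner negations $N$ occur; commutativity of $X$ is used only to guarantee that $(X,\oplus,',0)$ is a genuine MV-algebra, not in verifying that $\mu$ is a homomorphism. One could alternatively note, via Theorem \ref{3.8} and (BCK8), that $(X,\mu)$ is at least a state MV-algebra for free, but this weaker conclusion does not give the morphism property, so the direct computation above is the efficient route.
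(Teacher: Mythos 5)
Your proof is correct and is essentially identical to the paper's own argument: both verify $\mu(x')=\mu(x)'$ via $x'=1*x$ and $\mu(x\oplus y)=\mu(x)\oplus\mu(y)$ via the term $x\oplus y=N(Nx*y)=1*((1*x)*y)$, using only the BCK-homomorphism property, $\mu(0)=0$, and $\mu(1)=1$, then cite idempotency to conclude. Your added remark that all MV-operations are $*$-terms in $0$ and $1$ is a nice conceptual gloss on the same computation, but the route is the same.
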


\begin{proof}
Let $x,y\in X$. Then $\mu(x')=\mu(1*x)=\mu(1)*\mu(x)=1*\mu(x)=\mu(x)'$. Also,
$$\mu(x\oplus y)=\mu(N(Nx*y))=1*\mu(Nx*y)=1*(\mu(Nx)*\mu(y))=1*((1*\mu(x))*\mu(y))=\mu(x)\oplus \mu(y)$$
so, $\mu(x)$ is a homomorphism of MV-algebras. Since $\mu^2=\mu$, then $\mu$ is a state-morphism operator
on the MV-algebra $(X,\oplus,',0)$. That is, $(X,\mu)$ is a state-morphism MV-algebra.
\end{proof}

\section{Generators of State-Morphism BCK-algebras}%5

Let $\mathcal {SMBCK}$ be the quasivariety of state-morphism BCK-algebras. We note that the system of BCK-algebras is not a variety because it is not closed under homomorphic images, \cite[Thm VI.4.1]{BCK}. On the other side, the system of commutative BCK-algebras or of quasi-commutative BCK-algebra forms a variety, \cite[Thm I.5.2, Thm I.9.2]{BCK}. Since by \cite[Thm I.9.4]{BCK}, every finite BCK-algebra is quasi-commutative, we can define the variety generated by a system of finite BCK-algebras.

Let $(X,*,0)$ be a BCK-algebra and on the direct product BCK-algebra $X\times X$ we define a mapping $\mu_X: X \times X \to X\times X$ by $\mu_X(x,y)=(x,x),$ $(x,y) \in X \times X.$  Then $\mu_X$ is a state-morphism on the BCK-algebra $X\times X$ and the state-morphism BCK-algebra $D(X):=(X\times X,\mu_X)$ is a said to be a {\it diagonal state-morphism BCK-algebra.} In the same way we can define also $\nu:X\times X \to X\times X$ by $\nu(x,y) =(y,y),$ $(x,y) \in X \times X,$ and $(X\times X,\nu)$ is again a state-morphism BCK-algebra which is isomorphic to $D(X)$ under the isomorphism $h(x,y)=(y,x),$ $(x,y) \in X \times X.$ For example, if $X =[0,1]$ is the MV-algebra of the real interval, then it generates the variety of MV-algebras (as well as the quasivariety of MV-algebras), and  by \cite[Thm 5.4]{DKM}, $D([0,1])$ generates the variety of state-morphism MV-algebras.

Given a quasivariety of BCK-algebras $\mathcal V,$ let $\mathcal V_\mu$
denote the class of state-morphism BCK-algebras $(X,\mu)$ such
that $X \in \mathcal V.$ Then $\mathcal V_\mu$ is a quasivariety, too.

As usual, given a class $\mathcal{K}$ of algebras of the same type,
$\mathsf{ I}(\mathcal{K})$, $\mathsf{H}(\mathcal{K})$,
$\mathsf{S}(\mathcal{K}),$ $\mathsf{P}(\mathcal{K}),$ and $\mathsf{P_R}(\mathcal{K})$ will
denote the class of isomorphic images, of homomorphic images, of
subalgebras, of direct products of algebras and of reduced products from $
\mathcal{K}$, respectively. Moreover, let $\mathsf{Q_V}(\mathcal{K})$ and $\mathsf{V}(\mathcal{K})$ denote the quasivariety and the variety, respectively, generated by $\mathcal{K}$. We recall that a quasivariety is closed under isomorphic images, subalgebras, reduced products and containing the one-element algebras, see \cite[Def V.2.24]{Burris}, and a variety is closed under homomorphic images, subalgebras and products.

Using methods from \cite[Sec 5]{DKM}, which can be easily modified for
state-morphism BCK-algebras instead of state-morphism MV-algebras, we
can  prove the following two lemmas and  theorem on generators for a case when we have a variety of BCK-algebras as well as for a more general case~- for quasivarieties of BCK-algebras; for
reader's convenience, we present outlines of theirs proofs.

First we start with proofs concerning the case when a family of BCK-algebras belongs to some variety of BCK-algebras.

\begin{lem}\label{wd}
{\rm (1)} Let $\mathcal{K}$ be a class of
BCK-algebras belonging to some variety of BCK-algebras. Then $\mathsf{V(D}(\mathcal{K}))\subseteq
\mathsf{V}(\mathcal{K})_\mu.$
\newline {\rm (2)} Let $\mathcal{V}$
be any variety of BCK-algebras. Then $\mathcal{V}
_\mu=\mathsf{ISD}(\mathcal{V})$.
\end{lem}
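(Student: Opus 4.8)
The plan is to prove the two assertions separately, with the second building on the tools introduced for the first. For part (1), the goal is to show $\mathsf{V}(\mathsf{D}(\mathcal K)) \subseteq \mathsf{V}(\mathcal K)_\mu$. Since $\mathsf{V}(\mathcal K)_\mu$ is defined to be a class of algebras closed under $\mathsf H$, $\mathsf S$, and $\mathsf P$ (it is a variety, being the $\mu$-enrichment of a variety of BCK-algebras), it suffices to check that every diagonal algebra $\mathsf{D}(X)$ with $X \in \mathcal K$ already lies in $\mathsf{V}(\mathcal K)_\mu$, and then invoke minimality of the generated variety. First I would verify that the underlying BCK-algebra of $\mathsf{D}(X)$, namely $X\times X$, lies in $\mathsf V(\mathcal K)$, which is immediate since $X \in \mathcal K \subseteq \mathsf V(\mathcal K)$ and varieties are closed under products. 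The only remaining point is that the designated operator $\mu_X(x,y)=(x,x)$ is a genuine state-morphism operator, which is routine: it is a BCK-endomorphism of $X\times X$ (it acts coordinatewise by a projection followed by the diagonal) and it is idempotent. Hence $\mathsf{D}(X) \in \mathsf V(\mathcal K)_\mu$ for each $X \in \mathcal K$, so $\mathsf{D}(\mathcal K) \subseteq \mathsf V(\mathcal K)_\mu$, and applying $\mathsf V$ to both sides together with the fact that $\mathsf V(\mathcal K)_\mu$ is already a variety yields the claim.

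For part (2), I would prove the two inclusions of $\mathcal V_\mu = \mathsf{ISD}(\mathcal V)$ in turn. The inclusion $\mathsf{ISD}(\mathcal V) \subseteq \mathcal V_\mu$ is the easy direction: each $\mathsf{D}(X)$ with $X \in \mathcal V$ is a state-morphism BCK-algebra whose underlying algebra $X\times X$ lies in $\mathcal V$ (as $\mathcal V$ is a variety, hence closed under products), so $\mathsf{D}(\mathcal V) \subseteq \mathcal V_\mu$; and since $\mathcal V_\mu$ is closed under $\mathsf S$ and $\mathsf I$, the whole class $\mathsf{ISD}(\mathcal V)$ is contained in it. The substantive direction is $\mathcal V_\mu \subseteq \mathsf{ISD}(\mathcal V)$, which is where the real work lies.

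For that direction, I would take an arbitrary $(X,\mu) \in \mathcal V_\mu$ and construct an embedding of $(X,\mu)$ into some diagonal algebra $\mathsf D(Y)$ with $Y \in \mathcal V$. The natural candidate, following the adjoint-pair picture developed around Theorem \ref{adjoint1}, is to exploit that $\mu$ decomposes $X$ via its image $A=\mu(X)$ and kernel $I=\Ker(\mu)$. The idea is to send $x \mapsto (\mu(x), x)$ into $X \times X$, equipped with the diagonal operator $\mu_X(u,v)=(u,u)$: this map is a BCK-homomorphism because $\mu$ and the identity both are, it is injective because its second coordinate is already the identity, and it intertwines $\mu$ with $\mu_X$ precisely because $\mu_X(\mu(x),x) = (\mu(x),\mu(x)) = (\mu(\mu(x)), \mu(x))$ using $\mu^2 = \mu$. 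This exhibits $(X,\mu)$ as (isomorphic to) a subalgebra of $\mathsf D(X)$ with $X \in \mathcal V$, giving membership in $\mathsf{ISD}(\mathcal V)$. The main obstacle I anticipate is verifying that this map is a morphism of state-morphism BCK-algebras in the strict sense — that is, that it commutes with the unary operators on the nose, not merely up to the ideal $\Ker(\mu)$ — and making sure the target diagonal algebra has its underlying BCK-algebra genuinely in $\mathcal V$; both hinge on the idempotency $\mu^2=\mu$ and on $\mathcal V$ being a variety (closed under $\mathsf S$ and $\mathsf P$), which is exactly the hypothesis being used here and is the feature that fails for a mere quasivariety, motivating the separate treatment in the subsequent results.
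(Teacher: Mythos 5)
Your proposal is correct and takes essentially the same approach as the paper: in part (2) your embedding $a \mapsto (\mu(a),a)$ is precisely the paper's map $\Phi$, and your part (1) merely repackages the paper's reduct-chasing argument, since your parenthetical assertion that $\mathsf{V}(\mathcal{K})_\mu$ is closed under $\mathsf{H}$, $\mathsf{S}$, $\mathsf{P}$ (which you should actually verify, as its $\mathsf{H}$-closure is exactly where the hypothesis that $\mathcal{K}$ lies in some variety of BCK-algebras is consumed) amounts to the paper's observation that BCK-reducts commute with these class operators. One minor correction to your closing remark: the variety hypothesis is not what makes part (2) work---the paper's Lemma \ref{wd1}(2) proves $\mathcal{V}_\mu=\mathsf{ISD}(\mathcal{V})$ verbatim for quasivarieties, since that direction needs only closure under $\mathsf{I}$, $\mathsf{S}$ and products; it is the part (1) style of argument that must trade $\mathsf{H}$ and $\mathsf{P}$ for $\mathsf{I}$ and $\mathsf{P_R}$ in the quasivariety setting.
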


\begin{proof}
(1) We have to prove that every BCK-reduct of a state-morphism BCK-algebra in $\mathsf{V(D}(\mathcal{K}))$ is in
$\mathsf{V}(\mathcal{K})$. Let $\mathcal{K}_{0}$ be the class of all
BCK-reducts of algebras in $\mathsf{D}(\mathcal{K})$. Let $X \in
\mathcal K,$ then $D(X)\in \mathsf{D}(\mathcal{K}).$ Then the BCK-reduct of $D(X)$ is $X\times X$, and since $X$ is a homomorphic
image (under the projection map) of $X\times X$,
$\mathcal{K}_{0}\subseteq \mathsf{P}(\mathcal{K})$ and
$\mathcal{K}\subseteq \mathsf{H}(\mathcal{K}_{0})$. Hence,
$\mathcal{K}_{0}$ and $\mathcal{K}$ generate the same variety.
Moreover, BCK-reducts of subalgebras (homomorphic images, direct
products respectively) of algebras from $\mathsf{D}(\mathcal{K})$
are subalgebras (homomorphic images, direct products, respectively)
of the corresponding BCK-reducts. Therefore, the BCK-reduct of any
algebra in $\mathsf{V(D}(\mathcal{K}))$ is in
$\mathsf{HSP}(\mathcal{K}_{0})=
\mathsf{HSP}(\mathcal{K})=\mathsf{V}(\mathcal{K})$.

(2) Let $(X,\mu)\in \mathcal{V}_\mu $. The map $ \Phi
:a\mapsto (\mu(a),a)$ is an embedding of $(X,\mu)$ into
$D(X)$. Moreover, $\Phi (\mu(a))=(\mu(\mu  (a)),\mu
(a))=(\mu(a),\mu(a))=\mu_{X}((\mu(a),a))=\mu
_{X}(\Phi(a))$. Hence, $\Phi $ is an injective homomorphism of
state-morphism BCK-algebras, and $(X,\mu)\in
\mathsf{ISD}(\mathcal{V})$. Conversely, the BCK-reduct of any
algebra in $\mathsf{D}(\mathcal{V})$ is in $\mathcal{V}$,  and hence
the BCK-reduct of any member of $\mathsf{ISD}(\mathcal{V})$ is in
$\mathsf{IS}(\mathcal{V})= \mathcal{V}$. Hence, any member of
$\mathsf{ISD}(\mathcal{V})$ is in $\mathcal{V}_\mu$.
\end{proof}

\begin{lem}\label{main}
Let $\mathcal{K}$ be a class of BCK-algebras. Then:
\newline {\rm (1)}
$\mathsf{DH}(\mathcal{K})\subseteq \mathsf{HD}(\mathcal{K})$.
\newline {\rm (2)}
$\mathsf{DS}(\mathcal{K})\subseteq \mathsf{ISD}(\mathcal{K})$.
\newline {\rm (3)}
$\mathsf{DP}(\mathcal{K})\subseteq \mathsf{IPD}(\mathcal{K})$.
\newline {\rm (4)}
$\mathsf{V(D}(\mathcal{K}))=\mathsf{ISD}(\mathsf{V}( \mathcal{K}))$.
\end{lem}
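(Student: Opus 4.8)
The plan is to establish the three inclusions (1)--(3) by writing down the relevant homomorphism, subalgebra, and isomorphism explicitly, and then to obtain the equality (4) by feeding (1)--(3) and Lemma~\ref{wd} into the standard calculus of the class operators $\mathsf{H},\mathsf{S},\mathsf{P},\mathsf{I}$. Throughout I use $D(Z)=(Z\times Z,\mu_Z)$ with $\mu_Z(a,b)=(a,a)$, and I rely on the standing assumption of this part of the section that $\mathcal{K}$ lies in a variety of BCK-algebras, so that the homomorphic images, subalgebras and products that occur are again BCK-algebras and the diagonal construction applies to them.

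For (1), given $Y\in\mathsf{H}(\mathcal{K})$, write $Y=f(X)$ with $f\colon X\to Y$ a surjective homomorphism and $X\in\mathcal{K}$. I would set $\hat f\colon X\times X\to Y\times Y$, $\hat f(a,b)=(f(a),f(b))$; this is a surjective BCK-homomorphism that intertwines the diagonal operators, since $\hat f(\mu_X(a,b))=(f(a),f(a))=\mu_Y(\hat f(a,b))$. Hence $\hat f\colon D(X)\to D(Y)$ is a surjective homomorphism of state-morphism BCK-algebras and $D(Y)\in\mathsf{H}(D(X))\subseteq\mathsf{HD}(\mathcal{K})$. For (2), if $Y$ is a subalgebra of $X\in\mathcal{K}$, then $Y\times Y$ is a subalgebra of the BCK-algebra $X\times X$, it is closed under $\mu_X$ because $\mu_X(y_1,y_2)=(y_1,y_1)\in Y\times Y$, and $\mu_X$ restricted to $Y\times Y$ is exactly $\mu_Y$; thus $D(Y)$ is literally a state-morphism subalgebra of $D(X)$, giving $\mathsf{DS}(\mathcal{K})\subseteq\mathsf{SD}(\mathcal{K})\subseteq\mathsf{ISD}(\mathcal{K})$.

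For (3), let $Y=\prod_{i\in I}X_i$ with each $X_i\in\mathcal{K}$, and take the rearrangement map $\phi\colon\bigl(\prod_i X_i\bigr)\times\bigl(\prod_i X_i\bigr)\to\prod_i(X_i\times X_i)$ defined by $\phi\bigl((a_i)_i,(b_i)_i\bigr)=\bigl((a_i,b_i)\bigr)_i$. It is a BCK-isomorphism, and it respects the operators, because both $\phi\bigl(\mu_Y((a_i)_i,(b_i)_i)\bigr)$ and $\bigl(\prod_i\mu_{X_i}\bigr)\bigl(\phi((a_i)_i,(b_i)_i)\bigr)$ equal $\bigl((a_i,a_i)\bigr)_i$. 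Hence $\phi$ is an isomorphism $D(Y)\cong\prod_i D(X_i)$ of state-morphism BCK-algebras and $D(Y)\in\mathsf{IP}(\mathsf{D}(\mathcal{K}))=\mathsf{IPD}(\mathcal{K})$. For (4) I would prove the two inclusions separately. The inclusion $\mathsf{V}(\mathsf{D}(\mathcal{K}))\subseteq\mathsf{ISD}(\mathsf{V}(\mathcal{K}))$ is immediate: Lemma~\ref{wd}(1) gives $\mathsf{V}(\mathsf{D}(\mathcal{K}))\subseteq\mathsf{V}(\mathcal{K})_\mu$, and Lemma~\ref{wd}(2) applied to the variety $\mathcal{V}=\mathsf{V}(\mathcal{K})$ gives $\mathsf{V}(\mathcal{K})_\mu=\mathsf{ISD}(\mathsf{V}(\mathcal{K}))$. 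For the reverse inclusion I would push $\mathsf{D}$ through $\mathsf{V}(\mathcal{K})=\mathsf{HSP}(\mathcal{K})$ using (1)--(3) in turn: $\mathsf{DHSP}(\mathcal{K})\subseteq\mathsf{HDSP}(\mathcal{K})\subseteq\mathsf{HISDP}(\mathcal{K})\subseteq\mathsf{HISIPD}(\mathcal{K})$. Since $\mathsf{I}$ is absorbed by $\mathsf{H}$ and by $\mathsf{S}$ (so $\mathsf{HISIP}\subseteq\mathsf{HSP}$), this yields $\mathsf{D}(\mathsf{V}(\mathcal{K}))\subseteq\mathsf{HSP}(\mathsf{D}(\mathcal{K}))=\mathsf{V}(\mathsf{D}(\mathcal{K}))$; applying $\mathsf{IS}$ and using that the variety $\mathsf{V}(\mathsf{D}(\mathcal{K}))$ is closed under $\mathsf{S}$ and $\mathsf{I}$ gives $\mathsf{ISD}(\mathsf{V}(\mathcal{K}))\subseteq\mathsf{V}(\mathsf{D}(\mathcal{K}))$, completing the equality.

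The verifications in (1)--(3) that the displayed maps are BCK-morphisms and commute with the diagonal operators are routine. I expect the main obstacle to be the bookkeeping in the reverse inclusion of (4): one must insert each copy of $\mathsf{D}$ at the correct place as the operators $\mathsf{H},\mathsf{S},\mathsf{P}$ are peeled off, justify the resulting absorption $\mathsf{HISIP}\subseteq\mathsf{HSP}$, and keep in force the assumption that $\mathcal{K}$ sits inside a variety of BCK-algebras, which is precisely what keeps the homomorphic images appearing in (1) and in $\mathsf{V}(\mathcal{K})$ within the class of BCK-algebras.
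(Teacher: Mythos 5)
Your proposal is correct and follows essentially the same route as the paper: the same coordinatewise map $\hat f$ for (1), the same observation that $D(Y)$ is literally a state-morphism subalgebra of $D(X)$ for (2), the same rearrangement isomorphism $D\bigl(\prod_i X_i\bigr)\cong\prod_i D(X_i)$ for (3), and for (4) the same two-step argument combining the operator calculus $\mathsf{DHSP}(\mathcal{K})\subseteq\mathsf{HSPD}(\mathcal{K})$ with Lemma~\ref{wd}(1)--(2). The only difference is that you spell out the absorption of the intermediate $\mathsf{I}$'s explicitly, which the paper leaves implicit.
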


\begin{proof}
(1) Let $D(C)\in \mathsf{DH}(\mathcal{K})$. Then there are $ X\in
\mathcal{K}$ and a BCK-homomorphism $h$ from $X$ onto $C$. Let, for all
$a,b\in X$, $h^{*}(a,b)=(h(a),h(b))$. We claim that $h^{*}$ is a 
homomorphism from  the diagonal state-morphism BCK-algebra $D(X)$ onto $D(C)$. That $ h^{*}$ is a
BCK-homomorphism is clear. We verify that $h^{*}$ is compatible with
$\mu _{X}$. We have $h^{*}(\mu
_{X}(a,b))=h^{*}(a,a)=(h(a),h(a))=\mu _{C}(h(a),h(b))=\mu
_{C}(h^{*}(a,b)).$ Finally, since $h$ is onto, given $(c,d)\in
C\times C$, there are $a,b\in X$ such that $h(a)=c$ and $h(b)=d$.
Hence, $h^{*}(a,b)=(c,d)$, $h^{*}$ is onto, and $D(C)\in
\mathsf{HD}(\mathcal{K})$.

(2) It is trivial.

(3) Let $X=\prod_{i\in I} X_{i}\in \mathsf{P}(\mathcal{K})$, where
each $X_{i}$ is in $\mathcal{K}$. We assert the map
\begin{center}
$\Phi :\bigl((a_{i}:i\in I),(b_{i}:i\in I)\bigr)\mapsto
\bigl((a_{i},b_{i}\bigr):i\in I)$
\end{center}
is an isomorphism of state-morphism BCK-algebras from $D(X)$ onto $\prod_{i\in I}D(X_{i})$. Indeed,
it is clear that $\Phi $ is a BCK-isomorphism. Moreover, denoting
the state-morphism of $\prod_{i\in I}D(X_{i})$ by $\mu ^{*}$, we
get:
\begin{eqnarray*}&\Phi \bigl(\mu _{X}\bigl((a_{i}:i\in I),(b_{i}:i\in I)\bigr)
\bigr)=\Phi
\bigl((a_{i}:i\in I),(a_{i}:i\in I)\bigr)=\\
&= \bigl((a_{i},a_{i}):i\in I\bigr)=\bigl(\mu
_{X_{i}}(a_{i},b_{i}):i\in I\bigr)=\mu ^{*}\bigl(\Phi
\bigl((a_{i}:i\in I),(b_{i}:i\in I)\bigr)\bigr),
\end{eqnarray*}
and whence $\Phi $ is an isomorphism of  state-morphism BCK-algebras.

(4) By (1), (2) and (3),
$\mathsf{DV}(\mathcal{K})=\mathsf{DHSP}(\mathcal{K} )\subseteq
\mathsf{HSPD}(\mathcal{K})=\mathsf{V(D}(\mathcal{K}))$, and hence $
\mathsf{ISDV}(\mathcal{K})\subseteq
\mathsf{ISV(D}(\mathcal{K}))=\mathsf{V(D}( \mathcal{K}))$. Conversely,
by Lemma \ref{wd}(1), $\mathsf{V(D}(\mathcal{K}))\subseteq
\mathsf{V}(\mathcal{K})_\mu$, and by Lemma \ref{wd}(2), $
\mathsf{V}(\mathcal{K})_\mu=\mathsf{ISDV}(\mathcal{K})$. This
proves the claim.
\end{proof}

\begin{thm}\label{ad:6} If a system $\mathcal K$ of BCK-algebras
generates a variety $\mathcal V$ of  BCK-algebras, then $\mathsf
D(\mathcal K)$ generates the variety $\mathcal V_\mu$ of
state-morphism BCK-algebras.
\end{thm}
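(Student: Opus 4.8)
The plan is to read the result off directly from the two preceding lemmas; at the level of this theorem there is essentially nothing left to compute, since all the substantive work has already been discharged in Lemma \ref{wd} and Lemma \ref{main}. Because $\mathcal K$ generates $\mathcal V$, by definition $\mathcal V = \mathsf V(\mathcal K)$, so it suffices to establish the single set-theoretic equality $\mathsf{V(D}(\mathcal K)) = \mathcal V_\mu$. This one identity does both jobs at once: it exhibits $\mathcal V_\mu$ as the variety $\mathsf{V(D}(\mathcal K))$ generated by $\mathsf D(\mathcal K)$ (so that speaking of ``the variety $\mathcal V_\mu$'' is justified), and it says precisely that $\mathsf D(\mathcal K)$ is a generating system for it.

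Concretely, I would first invoke Lemma \ref{main}(4), which yields
$$\mathsf{V(D}(\mathcal K)) = \mathsf{ISD}(\mathsf V(\mathcal K)) = \mathsf{ISD}(\mathcal V).$$
Then I would apply Lemma \ref{wd}(2) to the variety $\mathcal V$ itself, which gives $\mathcal V_\mu = \mathsf{ISD}(\mathcal V)$. Chaining these two identities produces
$$\mathsf{V(D}(\mathcal K)) = \mathsf{ISD}(\mathcal V) = \mathcal V_\mu,$$
which is exactly the assertion of the theorem.

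The one place to be careful is the bookkeeping of which class is fed to which lemma: Lemma \ref{wd}(2) is stated for an \emph{arbitrary} variety, so I must pass it the variety $\mathcal V = \mathsf V(\mathcal K)$ and not the bare generating class $\mathcal K$, and correspondingly Lemma \ref{main}(4) must be read with the same $\mathcal K$ so that its right-hand side $\mathsf{ISD}(\mathsf V(\mathcal K))$ matches $\mathsf{ISD}(\mathcal V)$. Beyond this matching of notation there is no obstacle at the level of the theorem. The genuine difficulty lives upstream, in the closure computations $\mathsf{DH} \subseteq \mathsf{HD}$, $\mathsf{DS} \subseteq \mathsf{ISD}$, $\mathsf{DP} \subseteq \mathsf{IPD}$ of Lemma \ref{main} and in the diagonal embedding $\Phi\colon a \mapsto (\mu(a),a)$ of Lemma \ref{wd}(2); once those are granted, the present statement is an immediate corollary.
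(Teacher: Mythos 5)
Your proposal is correct and is essentially identical to the paper's own proof: both combine Lemma \ref{main}(4), giving $\mathsf{V(D}(\mathcal K))=\mathsf{ISD}(\mathsf{V}(\mathcal K))$, with Lemma \ref{wd}(2) applied to the variety $\mathcal V=\mathsf{V}(\mathcal K)$, giving $\mathcal V_\mu=\mathsf{ISD}(\mathsf{V}(\mathcal K))$, and chain the two equalities. Your remark about feeding the variety $\mathcal V$ (not the bare class $\mathcal K$) into Lemma \ref{wd}(2) matches exactly how the paper uses it.
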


\begin{proof}
By Lemma \ref{main}(4), $\mathsf{V(D}(\mathcal
K))=\mathsf{I}$\textsf{S}$\mathsf{D}(\mathsf{V}(\mathcal K))$.
Moreover, by Lemma \ref{wd}(2), $\mathsf{V}(\mathcal
K)_\mu=\mathsf{ISDV }(\mathcal K)$. Hence,
$\mathsf{V}(\mathsf{D}(\mathcal K))=\mathsf{V}(\mathcal K )_\mu$.
\end{proof}

Let $[0,1]$ be the real interval. We endow it with the BCK-structure as before: $s*_\mathbb R t = \max\{0,s-t\},$ $s,t \in [0,1]$. We denote by $[0,1]_{BCK}:=([0,1],*_\mathbb R,0)$ and it is a bounded commutative BCK-algebra. If, for bounded commutative BCK-algebras, we define a state-morphism operator $\mu$ as a homomorphism of bounded commutative BCK-algebras $\mu: X \to X$ such that $\mu \circ \mu = \mu$ and$\mu(1)=1,$ we can obtain the following result.

\begin{cor}\label{AD:co}
Let $\mathcal V$ be the variety of bounded commutative BCK-algebras, and let $\mathcal V_{BCK}$ be the variety of all bounded commutative state-morphism BCK-algebras. Then $\mathcal V_{BCK} = \mathsf V(D([0,1]_{BCK})).$
\end{cor}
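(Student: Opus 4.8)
The plan is to run the generator machinery of this section inside the variety of bounded commutative BCK-algebras, and then to feed it the single generator $[0,1]_{BCK}$. Throughout I work in the signature of bounded commutative BCK-algebras in which the top element $1$ is a nullary operation, so that every homomorphism preserves $1$; in this signature the class $\mathcal V$ of bounded commutative BCK-algebras is a variety and is, via the correspondence recalled before Theorem \ref{3.8} (with $x\oplus y=N(Nx*y)$, $x'=Nx$ and conversely $x*y=x\odot y'$), term-equivalent to the variety of MV-algebras. In this setting a state-morphism operator is, by the definition fixed just before the statement, precisely an idempotent endomorphism $\mu$ with $\mu(1)=1$, so that $\mathcal V_{BCK}$ coincides with the class $\mathcal V_\mu$ of state-morphism BCK-algebras whose reduct lies in $\mathcal V$. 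The first thing I would note is that Lemmas \ref{wd} and \ref{main} and Theorem \ref{ad:6} hold verbatim in this enriched signature: their proofs are purely universal-algebraic, and the only two points to recheck are that the diagonal operator $\mu_X(x,y)=(x,x)$ and the embedding $\Phi(a)=(\mu(a),a)$ respect the new constant, which is immediate since $\mu_X(1,1)=(1,1)$ and $\Phi(1)=(\mu(1),1)=(1,1)$.

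Next I would identify the BCK-variety generated by $[0,1]_{BCK}$. As an MV-algebra, the standard algebra $[0,1]$ generates the whole variety of MV-algebras (Chang's completeness theorem, as already used in this section for the state-morphism MV case). A term-equivalence is in particular a categorical equivalence that commutes with the class operators $\mathsf H$, $\mathsf S$, $\mathsf P$ and hence carries generating families to generating families; applying it to the above correspondence, and using that $[0,1]_{BCK}$ is exactly the BCK-reduct of the standard MV-algebra, I get $\mathsf V(\{[0,1]_{BCK}\})=\mathcal V$, the full variety of bounded commutative BCK-algebras.

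Finally I would apply Theorem \ref{ad:6} with $\mathcal K=\{[0,1]_{BCK}\}$. Since $\mathcal K$ generates $\mathcal V$, the theorem yields that $\mathsf D(\mathcal K)=\{D([0,1]_{BCK})\}$ generates $\mathcal V_\mu$; and as $\mathcal V_\mu=\mathcal V_{BCK}$ by the first paragraph, this is exactly $\mathcal V_{BCK}=\mathsf V(D([0,1]_{BCK}))$. The step I expect to require the most care is the bookkeeping around the signature: one must keep the constant $1$ in the language so that $\mathcal V_\mu$ really is $\mathcal V_{BCK}$ and not the strictly larger class obtained in the bare signature $(*,0)$, which would also contain state-morphisms such as the zero operator $O_X$ with $O_X(1)=0\ne 1$. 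A quick alternative, avoiding Theorem \ref{ad:6} altogether, is to invoke the term-equivalence between bounded commutative state-morphism BCK-algebras and state-morphism MV-algebras furnished by Theorems \ref{3.8} and \ref{4.12}: under it $D([0,1]_{BCK})$ corresponds to the diagonal state-morphism MV-algebra $D([0,1])$, which generates the variety of state-morphism MV-algebras by \cite[Thm 5.4]{DKM}, whence the claim.
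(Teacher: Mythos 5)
Your proof is correct and follows essentially the same route as the paper's: adapt Lemmas \ref{wd}, \ref{main} and Theorem \ref{ad:6} to bounded commutative BCK-algebras with state-morphism operators satisfying $\mu(1)=1$, use Mundici's (term-)equivalence with MV-algebras together with Chang completeness to conclude that $[0,1]_{BCK}$ generates $\mathcal V$, and then apply Theorem \ref{ad:6}. Your explicit bookkeeping of the constant $1$ in the signature --- which is exactly what excludes spurious state-morphisms such as $O_X$ from $\mathcal V_\mu$ --- is a point the paper's one-line proof leaves implicit, and is a worthwhile clarification rather than a departure.
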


\begin{proof}
We can repeat the proofs of Lemmas \ref{wd}--\ref{main} and Theorem \ref{ad:6} also for state-morphism operators on bounded commutative BCK-algebras. We have $\mathcal V_{BCK} = \mathcal V_\mu.$ By \cite{Mun}, the variety of bounded BCK-algebras is categorically equivalent to the variety of MV-algebras. Since the MV-algebra $[0,1]$ generates the variety of MV-algebras, we have that the BCK-algebra $[0,1]_{BCK}$ generates the variety of bounded commutative BCK-algebras. Then by Theorem \ref{ad:6}, we have $\mathcal V_{BCK} = \mathsf V(D([0,1]_{BCK})).$
\end{proof}

\begin{cor}
There is uncountably many subvarieties of the variety $\mathcal V_{BCK}$ of  bounded commutative BCK-algebras with a state-morphism.
\end{cor}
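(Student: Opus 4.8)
The plan is to transport the statement, via Mundici's equivalence \cite{Mun}, to the already understood setting of state-morphism MV-algebras. The point I would exploit is that bounded commutative BCK-algebras and MV-algebras are not merely categorically but \emph{term}-equivalent: from an MV-algebra one recovers $x*y=x\odot y'$, and conversely $\oplus$ and $'$ are BCK-terms once the bound $1$ is available. Under this dictionary an MV-endomorphism of a bounded commutative BCK-algebra is exactly a BCK-endomorphism, so their idempotent, top-preserving instances coincide; by Theorem \ref{4.12} (and the routine converse) these are precisely the state-morphism operators. Hence the variety $\mathcal V_{BCK}$ and the variety $\mathsf{SMV}$ of state-morphism MV-algebras are term-equivalent, and therefore have \emph{isomorphic lattices of subvarieties}. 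By Corollary \ref{AD:co}, $\mathcal V_{BCK}=\mathsf V(D([0,1]_{BCK}))$, which matches $\mathsf{SMV}=\mathsf V(D([0,1]))$ from \cite[Thm 5.4]{DKM}, so the generators correspond as well.

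Granting this reduction, the routine part is to check that identities translate to identities in both directions; it then suffices to produce uncountably many subvarieties of $\mathsf{SMV}$, and to transport them back through the equivalence. This uncountability is the substantive input, and it is the known fact that the lattice of subvarieties of $\mathsf{SMV}$ is uncountable \cite{DKM}. Transporting any such uncountable family through the term-equivalence immediately yields uncountably many subvarieties of $\mathcal V_{BCK}$, which proves the corollary.

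The hard part is the uncountability for $\mathsf{SMV}$ itself, and it is worth isolating why it holds, since the analogous statement fails for plain MV-algebras (whose subvariety lattice is countable by Komori's theorem). The standard route is to exhibit a countable family $\{A_i\}_{i\in\mathbb N}$ of finite state-morphism MV-algebras that is \emph{independent}, meaning $A_i\notin\mathsf V(\{A_j:j\neq i\})$ for every $i$; then $S\mapsto\mathsf V(\{A_i:i\in S\})$ is an injection from $\mathcal P(\mathbb N)$ into the subvariety lattice, because if $i\in S\setminus T$ then $A_i\in\mathsf V(\{A_j:j\in S\})$ while $A_i\notin\mathsf V(\{A_j:j\in T\})\subseteq\mathsf V(\{A_j:j\neq i\})$. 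The naive candidates $D(\mathbb L_n)$ (with $\mathbb L_n$ the finite \L ukasiewicz chain) are \emph{not} independent: since $\mathsf V(D(\mathcal K))=\mathsf{ISD}(\mathsf V(\mathcal K))$ (the MV-analogue of Theorem \ref{ad:6}) and infinitely many \L ukasiewicz chains of unbounded order already generate the whole MV-variety, one recovers every $D(\mathbb L_n)$. The extra rigidity of the state-morphism must therefore be used: one keeps the MV-data bounded (for instance working inside powers of a single fixed chain) and lets only the idempotent endomorphism vary, so that no finite subfamily can approximate a missing member.

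The genuinely delicate step is the verification of independence, and this is where I expect the main obstacle to lie. Here I would use the characterization of subdirectly irreducible state-morphism algebras (the analogue of Theorem \ref{irr}, carried out in \cite{DKM}) to describe the subdirectly irreducible members of $\mathsf V(\{A_j:j\neq i\})$ and to show that $A_i$ is not among them; equivalently, one exhibits for each $i$ an identity in the language $\{*,0,\mu\}$ valid in every $A_j$ with $j\neq i$ but failing in $A_i$. Once this separation is established the transport through the term-equivalence is immediate, and the uncountably many subvarieties of $\mathsf{SMV}$ pull back to uncountably many subvarieties of $\mathcal V_{BCK}$.
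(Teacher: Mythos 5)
Your proposal is correct and follows essentially the same route as the paper: both transfer the known uncountability of the subvariety lattice of state-morphism MV-algebras \cite[Thm 7.11]{DKM} to $\mathcal V_{BCK}$ via Mundici's equivalence between MV-algebras and bounded commutative BCK-algebras \cite{Mun}. Your sharpening of ``categorical equivalence'' to term-equivalence (and the appended sketch of why the DKM result holds) only makes explicit what the paper leaves implicit; the substantive input and the overall structure of the argument are identical.
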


\begin{proof}
By \cite[Thm 7.11]{DKM}, the variety of state-morphism MV-algebras is uncountable. Because the variety of bounded commutative BCK-algebras is categorically equivalent to the variety of MV-algebras, \cite{Mun}, we have the statement in question.
\end{proof}

Now we present some analogous general results concerning quasivarieties. The proofs follow the similar ideas just used for varieties.

\begin{lem}\label{wd1}
{\rm (1)} Let $\mathcal{K}$ be a class of
BCK-algebras.
Then $\mathsf{Q_V(D}(\mathcal{K}))\subseteq
\mathsf{Q_V}(\mathcal{K})_\mu.$
\newline {\rm (2)} Let $\mathcal{V}$
be any quasivariety of BCK-algebras. Then $\mathcal{V}
_\mu=\mathsf{ISD}(\mathcal{V})$.
\end{lem}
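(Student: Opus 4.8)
The plan is to observe that both parts reduce to statements about binary products of BCK-algebras, so that the arguments of Lemma~\ref{wd} carry over once the variety closure operators are replaced by the quasivariety operators $\mathsf{I}$, $\mathsf{S}$, $\mathsf{P_R}$. The key point underlying everything is that a quasivariety is closed under direct products (a direct product is a reduced product over the trivial filter), in particular under binary products, and that it is closed under $\mathsf{I}$ and $\mathsf{S}$, so that $\mathsf{IS}(\mathcal{V})=\mathcal{V}$.

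For (1), I would first invoke the remark preceding the lemma: since $\mathsf{Q_V}(\mathcal{K})$ is a quasivariety, $\mathsf{Q_V}(\mathcal{K})_\mu$ is again a quasivariety in the signature of state-morphism BCK-algebras, hence closed under $\mathsf{I}$, $\mathsf{S}$, $\mathsf{P_R}$ and containing the trivial algebra. As $\mathsf{Q_V(D}(\mathcal{K}))$ is by definition the least quasivariety containing $\mathsf{D}(\mathcal{K})$, it suffices to show $\mathsf{D}(\mathcal{K})\subseteq \mathsf{Q_V}(\mathcal{K})_\mu$. For $X\in\mathcal{K}$ the BCK-reduct of $D(X)=(X\times X,\mu_X)$ is $X\times X$, a product of two copies of $X\in\mathcal{K}\subseteq\mathsf{Q_V}(\mathcal{K})$; closure under products gives $X\times X\in\mathsf{Q_V}(\mathcal{K})$, and since $\mu_X$ is a state-morphism on $X\times X$ we get $D(X)\in\mathsf{Q_V}(\mathcal{K})_\mu$. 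Passing to the generated quasivariety yields the inclusion. This argument is in fact slightly cleaner than the reduct computation of Lemma~\ref{wd}(1), since here I exploit directly that the target class is already known to be a quasivariety.

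For (2), I would transcribe the proof of Lemma~\ref{wd}(2) essentially verbatim. Given $(X,\mu)\in\mathcal{V}_\mu$, define $\Phi\colon X\to X\times X$ by $\Phi(a)=(\mu(a),a)$; it is a BCK-homomorphism because $\mu$ is, it satisfies $\Phi(\mu(a))=(\mu(a),\mu(a))=\mu_X(\Phi(a))$ by $\mu^2=\mu$, and it is injective from its second coordinate, so $\Phi$ embeds $(X,\mu)$ into $D(X)$. Since $X\in\mathcal{V}$ forces $D(X)\in\mathsf{D}(\mathcal{V})$, this gives $\mathcal{V}_\mu\subseteq\mathsf{ISD}(\mathcal{V})$. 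Conversely, the BCK-reduct of $D(X)$ is $X\times X\in\mathcal{V}$ by product closure of the quasivariety, and applying $\mathsf{I}$, $\mathsf{S}$ keeps the reduct inside $\mathcal{V}$ because $\mathsf{IS}(\mathcal{V})=\mathcal{V}$; hence every member of $\mathsf{ISD}(\mathcal{V})$ has its BCK-reduct in $\mathcal{V}$ and lies in $\mathcal{V}_\mu$.

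I expect no genuine obstacle here: the essential content was already isolated in Lemma~\ref{wd}, whose proof nowhere used closure under homomorphic images $\mathsf{H}$, only the binary-product and subalgebra constructions common to varieties and quasivarieties. The only care needed is the bookkeeping of closure operators~-- confirming that $\mathsf{Q_V}(\mathcal{K})_\mu$ is a quasivariety (already granted) and that finite products suffice for $X\times X\in\mathsf{Q_V}(\mathcal{K})$ and for $X\times X\in\mathcal{V}$. In particular I would not need the explicit Mal'cev description $\mathsf{Q_V}(\mathcal{K})=\mathsf{ISP_R}(\mathcal{K})$ for~(1); the characterisation of $\mathsf{Q_V}$ as the least quasivariety over the generators already does the job.
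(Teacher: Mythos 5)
Your proof is correct, and part (2) coincides with the paper's own argument (the same embedding $a\mapsto(\mu(a),a)$ into $D(X)$, and the same converse via product closure and commutation of reducts with $\mathsf{I}$, $\mathsf{S}$). Part (1), however, takes a genuinely different route. The paper forms the class $\mathcal{K}_0$ of BCK-reducts of $\mathsf{D}(\mathcal{K})$, proves $\mathsf{Q_V}(\mathcal{K}_0)=\mathsf{Q_V}(\mathcal{K})$ by an operator calculus based on the Mal'cev description $\mathsf{Q_V}(\cdot)=\mathsf{ISP_R}(\cdot)$, and then uses the fact that taking BCK-reducts commutes with $\mathsf{I}$, $\mathsf{S}$, $\mathsf{P_R}$ to conclude that the reduct of every member of $\mathsf{Q_V(D}(\mathcal{K}))$ lies in $\mathsf{Q_V}(\mathcal{K})$. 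You instead use only two facts: that $\mathsf{D}(\mathcal{K})\subseteq\mathsf{Q_V}(\mathcal{K})_\mu$ (which needs nothing beyond closure of a quasivariety under binary products), and that $\mathsf{Q_V}(\mathcal{K})_\mu$ is itself a quasivariety, so minimality of the generated quasivariety finishes the argument. This is shorter and avoids $\mathsf{ISP_R}$ entirely, but note where the content has gone: the assertion that $\mathcal{V}_\mu$ is a quasivariety is stated in the paper without proof, and verifying it amounts to exactly the reduct-commutation bookkeeping (reducts of subalgebras, isomorphic images and reduced products of state-morphism algebras are subalgebras, isomorphic images and reduced products of the reducts) that constitutes the paper's proof of (1). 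So the two arguments have essentially the same mathematical content distributed differently: the paper's version is self-contained within the lemma, while yours is the cleaner packaging once that remark is granted --- which, since the paper asserts it before the lemma, you are entitled to do.
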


\begin{proof}
(1) We have to prove that every BCK-reduct of a state-morphism BCK-algebra in $\mathsf{Q_V}(\mathcal{K})$ is in
$\mathsf{Q_V}(\mathcal{K})$.

Let $\mathcal{K}_{0}$ be the class of all
BCK-reducts of algebras in $\mathsf{D}(\mathcal{K})$. Let $X \in
\mathcal K,$ and let $\{0\}$ be the one-element BCK-algebra which is a subalgebra of $X.$ Then $D(X)\in \mathsf{D}(\mathcal{K}).$ The BCK-reduct of $D(X)$ is $X\times X$, and since $X$ is isomorphic to the BCK-algebra $\{0\}\times X,$ which is a subalgebra of $X \times X,$ we have $X \in \mathsf{IS}(\mathcal K_0).$ Thus $\mathcal{K}_{0}\subseteq \mathsf{P}(\mathcal{K})$ and $\mathcal{K}\subseteq \mathsf{IS}(\mathcal{K}_{0})$. By \cite[Thm 2.23, 2.25]{Burris}, we have
$\mathsf{Q_V}(\mathcal{K}_0) = \mathsf{ISP_R}(\mathcal{K}_0) \subseteq \mathsf{ISP_RP}(\mathcal{K})\subseteq \mathsf{ISIP_R}(\mathcal{K})\subseteq \mathsf{IISP_R}(\mathcal{K}) = \mathsf{ISP_R}(\mathcal{K})= \mathsf{Q_V}(\mathcal{K}).$ Similarly, $\mathsf{Q_V}(\mathcal{K}) = \mathsf{ISP_R}(\mathcal{K}) \subseteq \mathsf{ISP_RIS}(\mathcal{K}_0) \subseteq \mathsf{ISIP_RS}(\mathcal{K}_0) \subseteq \mathsf{ISISP_R}(\mathcal{K}_0) \subseteq \mathsf{IISSP_R}(\mathcal{K}_0) = \mathsf{ISP_R}(\mathcal{K}_0) = \mathsf{Q_V}(\mathcal{K}_0).$  Hence, $\mathcal K$ and $\mathcal K_0$ generates the same quasivariety.

Moreover, BCK-reducts of subalgebras (isomorphic  images, reduced
products, respectively) of algebras from $\mathsf{D}(\mathcal{K})$
are subalgebras (isomorphic images, reduced products, respectively)
of the corresponding BCK-reducts. Therefore, the BCK-reduct of any
algebra in $\mathsf{Q_V(D}(\mathcal{K}))$ is in
$\mathsf{Q_V}(\mathcal K_0)=
\mathsf{Q_V}(\mathcal{K})=\mathsf{Q_V}(\mathcal{K}),$ which proves (1).

(2) Let $(X,\mu)\in \mathcal{V}_\mu $. The map $ \Phi
:a\mapsto (\mu(a),a)$ is an embedding of $(X,\mu)$ into
$D(X)$. Moreover, $\Phi (\mu(a))=(\mu(\mu  (a)),\mu
(a))=(\mu(a),\mu  (a))=\mu_{X}((\mu  (a),a))=\mu
_{X}(\Phi(a))$. Hence, $\Phi $ is an injective homomorphism of
state-morphism BCK-algebras, and $(X,\mu)\in
\mathsf{ISD}(\mathcal{V})$. Conversely, let $X \in \mathcal V.$ Then the BCK-reduct of $D(X)$ is $X \times X,$ and $X\times X$ is isomorphic with the reduced product $(X\times X)/F,$ where $F$ is the one-element filter $F =\{1,2\}$ of the set $I=\{1,2\}.$ Hence, $X\times X$ is in $\mathcal V,$ and the BCK-reduct of any
algebra in $\mathsf{D}(\mathcal{V})$ is in $\mathcal{V},$  whence
the BCK-reduct of any member of $\mathsf{ISD}(\mathcal{V})$ is in
$\mathsf{IS}(\mathcal{V})= \mathcal{V}$. Therefore, any member of
$\mathsf{ISD}(\mathcal{V})$ is in $\mathcal{V}_\mu$.
\end{proof}

\begin{lem}\label{mainqv}
Let $\mathcal{K}$ be a class of BCK-algebras. Then:
\newline {\rm (1)}
$\mathsf{DI}(\mathcal{K})\subseteq \mathsf{ID}(\mathcal{K})$.
\newline {\rm (2)}
$\mathsf{DS}(\mathcal{K})\subseteq \mathsf{ISD}(\mathcal{K})$.
\newline {\rm (3)}
$\mathsf{DP_R}(\mathcal{K})\subseteq \mathsf{IP_RD}(\mathcal{K})$.
\newline {\rm (4)}
$\mathsf{Q_V(D}(\mathcal{K}))=\mathsf{ISD}(\mathsf{Q_V}( \mathcal{K}))$.
\end{lem}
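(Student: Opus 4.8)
The plan is to prove (1)--(4) by closely following the proof of Lemma~\ref{main}, replacing the homomorphism operator $\mathsf{H}$ by the isomorphism operator $\mathsf{I}$ in part~(1) and the direct-product operator $\mathsf{P}$ by the reduced-product operator $\mathsf{P_R}$ in part~(3); part~(2) is unchanged and part~(4) is the same operator-calculus bookkeeping. For part~(1), suppose $C\in\mathsf{I}(\mathcal{K})$ with $h\colon X\to C$ an isomorphism and $X\in\mathcal{K}$. Defining $h^*(a,b)=(h(a),h(b))$ on $X\times X$, I would check, exactly as in Lemma~\ref{main}(1), that $h^*$ is a BCK-homomorphism compatible with the diagonal state-morphisms, i.e.\ $h^*(\mu_X(a,b))=(h(a),h(a))=\mu_C(h^*(a,b))$. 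Since $h$ is bijective, so is $h^*$, hence $h^*$ is an isomorphism of state-morphism BCK-algebras and $D(C)\in\mathsf{ID}(\mathcal{K})$. Part~(2) is the same trivial observation as Lemma~\ref{main}(2): if $B$ is a subalgebra of $X\in\mathcal{K}$, then $B\times B$ is a subalgebra of $X\times X$ on which $\mu_X$ restricts to $\mu_B$, so $D(B)$ is a state-subalgebra of $D(X)$ and $D(B)\in\mathsf{SD}(\mathcal{K})\subseteq\mathsf{ISD}(\mathcal{K})$.

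The substantive step is part~(3), where the direct-product isomorphism of Lemma~\ref{main}(3) must be upgraded to its reduced-product version (note that direct products are the special case of reduced products with the smallest filter $\{I\}$, so (3) indeed subsumes Lemma~\ref{main}(3)). Fix a family $\{X_i\}_{i\in I}$ in $\mathcal{K}$ and a filter $F$ on $I$, and write $[a]_F$ for the $F$-class of a tuple $a=(a_i)$. The underlying BCK-algebra of $D\bigl((\prod_i X_i)/F\bigr)$ is $\bigl((\prod_i X_i)/F\bigr)^2$, and I would define
\[
\Phi\bigl([a]_F,[b]_F\bigr)=\bigl[\,((a_i,b_i):i\in I)\,\bigr]_F\in\Bigl(\prod_{i\in I}D(X_i)\Bigr)\big/F.
\]
Well-definedness is the crux: if $[a]_F=[a']_F$ and $[b]_F=[b']_F$, then $\{i:a_i=a_i'\},\{i:b_i=b_i'\}\in F$, and since $F$ is closed under finite intersections the set $\{i:(a_i,b_i)=(a_i',b_i')\}$, being their intersection, lies in $F$; injectivity runs the same computation in reverse, using that $F$ is upward closed. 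That $\Phi$ is a BCK-isomorphism is the standard fact that reduced products commute with the finite direct-product doubling, and surjectivity is immediate since every class $[\,((c_i,d_i):i\in I)\,]_F$ equals $\Phi\bigl([(c_i)]_F,[(d_i)]_F\bigr)$. Finally, compatibility with the state-morphisms reduces to the pointwise identity $\mu_{X_i}(a_i,b_i)=(a_i,a_i)$, exactly as in Lemma~\ref{main}(3): $\Phi(\mu(\dots))=[\,((a_i,a_i):i\in I)\,]_F=\mu^*(\Phi(\dots))$, where $\mu^*$ is the state-morphism of the reduced product. Hence $D\bigl((\prod_i X_i)/F\bigr)\in\mathsf{IP_RD}(\mathcal{K})$.

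With (1)--(3) established, part~(4) is a purely formal manipulation of class operators mirroring Lemma~\ref{main}(4), now using $\mathsf{Q_V}=\mathsf{ISP_R}$ (by \cite[Thm 2.23, 2.25]{Burris}). For one inclusion,
\[
\mathsf{DQ_V}(\mathcal{K})=\mathsf{DISP_R}(\mathcal{K})\subseteq\mathsf{IDSP_R}(\mathcal{K})\subseteq\mathsf{ISDP_R}(\mathcal{K})\subseteq\mathsf{ISIP_RD}(\mathcal{K})\subseteq\mathsf{ISP_RD}(\mathcal{K})=\mathsf{Q_V(D}(\mathcal{K})),
\]
the successive inclusions using (1), then (2) together with $\mathsf{II}=\mathsf{I}$, then (3), then $\mathsf{SI}\subseteq\mathsf{IS}$; applying $\mathsf{IS}$ and using $\mathsf{ISQ_V}=\mathsf{Q_V}$ gives $\mathsf{ISDQ_V}(\mathcal{K})\subseteq\mathsf{Q_V(D}(\mathcal{K}))$. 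For the reverse inclusion I would invoke Lemma~\ref{wd1}: by part~(1), $\mathsf{Q_V(D}(\mathcal{K}))\subseteq\mathsf{Q_V}(\mathcal{K})_\mu$, and by part~(2), $\mathsf{Q_V}(\mathcal{K})_\mu=\mathsf{ISD}(\mathsf{Q_V}(\mathcal{K}))$. Combining the two inclusions yields the claimed equality. The only genuine obstacle in the whole argument is the well-definedness and bijectivity of $\Phi$ modulo the filter $F$ in part~(3); everything else is a transcription of the variety case.
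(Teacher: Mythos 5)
Your proposal is correct and takes essentially the same route as the paper's own proof: parts (1)--(3) use the identical maps $h^*$ and $\Phi$ with the same filter-based well-definedness/injectivity argument for the reduced product, and part (4) performs the same operator-calculus chain (via $\mathsf{Q_V}=\mathsf{ISP_R}$) together with the same appeal to Lemma~\ref{wd1}(1)--(2) for the reverse inclusion.
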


\begin{proof}
(1) Let $D(C)\in \mathsf{DI}(\mathcal{K})$. Then there are $ X\in
\mathcal{K}$ and an isomorphism $h$ from $X$ onto $C$. Let, for all
$a,b\in X$, $h^{*}(a,b)=(h(a),h(b))$. We claim that $h^{*}$ is an
isomorphism from $D(X)$ onto $D(C)$. That $ h^{*}$ is an
isomorphism of BCK-algebras is clear. We verify that $h^{*}$ is compatible with
$\mu _{X}$. We have $h^{*}(\mu
_{X}(a,b))=h^{*}(a,a)=(h(a),h(a))=\mu _{C}(h(a),h(b))=\mu
_{C}(h^{*}(a,b)).$ Finally, since $h$ is onto, given $(c,d)\in
C\times C$, there are $a,b\in X$ such that $h(a)=c$ and $h(b)=d$.
Hence, $h^{*}(a,b)=(c,d)$, $h^{*}$ is onto, and $D(C)\in
\mathsf{ID}(\mathcal{K})$.

(2) It is trivial.

(3) Let $X=\prod_{i\in I} X_{i}/F\in \mathsf{P_R}(\mathcal{K})$, where
each $X_{i}$ is in $\mathcal{K}$, and $F$ is a filter over $I.$ We claim the map
\begin{center}
$\Phi :\bigl((a_{i}:i\in I)/F,(b_{i}:i\in I)/F\bigr)\mapsto
\bigl((a_{i},b_{i}\bigr):i\in I)/F$
\end{center}
is an isomorphism from $D(X)$ onto $\prod_{i\in I}D(X_{i})/F$. Indeed,
it is clear that $\Phi $ is a BCK-isomorphism: let $\bigl((a_{i},b_{i}\bigr):i\in I)/F = \bigl((a_{i}',b_{i}'\bigr):i\in I)/F.$ Then $\llbracket a_i=a_i'\rrbracket \cap \llbracket b_i = b_i' \rrbracket= \llbracket(a_i,b_i)=(a_i',b_i') \rrbracket \in F,$ so that $\llbracket a_i=a_i'\rrbracket,\llbracket b_i=b_i'\rrbracket\in F$ and hence $\bigl((a_{i},b_{i}\bigr):i\in I)/F =\bigl((a_{i}',b_{i}\bigr):i\in I)/F.$ Moreover, denoting
the state-morphism of $\prod_{i\in I}D(X_{i})$ by $\mu ^{*}$, we
get:
\begin{eqnarray*}&\Phi \bigl(\mu _{X}\bigl((a_{i}:i\in I)/F,(b_{i}:i\in I)/F\bigr)
\bigr)=\Phi
\bigl((a_{i}:i\in I)/F,(a_{i}:i\in I)\bigr)/F=\\
&= \bigl((a_{i},a_{i}):i\in I\bigr)/F=\bigl(\mu
_{X_{i}}(a_{i},b_{i}):i\in I\bigr)=\mu ^{*}\bigl(\Phi
\bigl((a_{i}:i\in I)/F,(b_{i}:i\in I)/F\bigr)\bigr),
\end{eqnarray*}
and hence, $\Phi $ is a state-morphism isomorphism.

(4) By (1), (2) and (3),
$\mathsf{DQ_V}(\mathcal{K})=\mathsf{DISP_R}(\mathcal{K} )\subseteq
\mathsf{IISIP_RD}(\mathcal{K})\subseteq \mathsf{IIISP_RD}(\mathcal{K}) = \mathsf{ISP_RD}(\mathcal{K}) =\mathsf{Q_V(D}(\mathcal{K}))$, and hence, $
\mathsf{ISDQ_V}(\mathcal{K})\subseteq
\mathsf{ISQ_V(D}(\mathcal{K}))=\mathsf{Q_V(D}( \mathcal{K}))$. Conversely,
by Lemma \ref{wd1}(1), $\mathsf{Q_V(D}(\mathcal{K}))\subseteq
\mathsf{Q_V}(\mathcal{K})_\mu$, and by Lemma \ref{wd1}(2), $
\mathsf{Q_V}(\mathcal{K})_\mu=\mathsf{ISDQ_V}(\mathcal{K})$. This
proves the claim.
\end{proof}

Finally, we present the main result of the section about generators of quasivarieties of state-morphism BCK-algebras which is an analogue of Theorem \ref{ad:6}.

\begin{thm}\label{ad:7} If a system $\mathcal K$ of BCK-algebras
generates a quasivariety $\mathcal V$ of BCK-algebras, then $\mathsf
D(\mathcal K)$ generates the quasivariety $\mathcal V_\mu$ of
state-morphism BCK-algebras.
\end{thm}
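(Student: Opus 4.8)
The plan is to mirror exactly the argument used for the variety case in Theorem \ref{ad:6}, replacing the variety operator $\mathsf{V}$ by the quasivariety operator $\mathsf{Q_V}$ throughout and invoking the quasivariety analogues of the two preparatory lemmas. Since $\mathcal{K}$ generates the quasivariety $\mathcal V$, we have $\mathcal V = \mathsf{Q_V}(\mathcal K)$, so the goal is to establish the single identity $\mathsf{Q_V}(\mathsf D(\mathcal K)) = \mathcal V_\mu$, where $\mathcal V_\mu$ denotes the class of state-morphism BCK-algebras $(X,\mu)$ with $X \in \mathcal V$.

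First I would apply Lemma \ref{mainqv}(4) to the system $\mathcal K$, which gives
\[
\mathsf{Q_V}(\mathsf D(\mathcal K)) = \mathsf{ISD}(\mathsf{Q_V}(\mathcal K)) = \mathsf{ISD}(\mathcal V).
\]
Next I would apply Lemma \ref{wd1}(2) to the quasivariety $\mathcal V$ itself, which yields $\mathcal V_\mu = \mathsf{ISD}(\mathcal V)$. Chaining these two identities gives
\[
\mathsf{Q_V}(\mathsf D(\mathcal K)) = \mathsf{ISD}(\mathcal V) = \mathcal V_\mu,
\]
which is precisely the assertion. This completes the argument in two lines once the lemmas are in place.

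I do not expect any genuine obstacle inside this theorem itself; all the real work has already been absorbed into the preparatory lemmas. The delicate points live there rather than here: in Lemma \ref{mainqv} one must check that the operator $\mathsf D$ interacts correctly with $\mathsf I$, $\mathsf S$, and crucially with reduced products $\mathsf{P_R}$ (part (3)), and in Lemma \ref{wd1}(1)--(2) one must verify that BCK-reducts of algebras in $\mathsf D(\mathcal K)$ generate the same quasivariety as $\mathcal K$, which requires the closure identities $\mathsf{Q_V}(\mathcal K) = \mathsf{ISP_R}(\mathcal K)$ together with the embedding $\Phi: a \mapsto (\mu(a),a)$ of $(X,\mu)$ into $D(X)$. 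Assuming those lemmas as already proved, the present theorem is an immediate formal consequence, exactly as Theorem \ref{ad:6} followed from Lemmas \ref{wd} and \ref{main}.
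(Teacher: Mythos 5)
Your proposal is correct and follows exactly the paper's own argument: the paper likewise deduces the theorem by chaining Lemma \ref{mainqv}(4), giving $\mathsf{Q_V}(\mathsf{D}(\mathcal K))=\mathsf{ISD}(\mathsf{Q_V}(\mathcal K))$, with Lemma \ref{wd1}(2) applied to the quasivariety $\mathsf{Q_V}(\mathcal K)=\mathcal V$, giving $\mathcal V_\mu=\mathsf{ISD}(\mathcal V)$. Your observation that all the substantive work lives in the two preparatory lemmas matches the paper's structure as well.
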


\begin{proof}
By Lemma \ref{mainqv}(4), $\mathsf{Q_V(D}(\mathcal
K))=\mathsf{ISD}(\mathsf{Q_V}(\mathcal K))$.
Moreover, by Lemma \ref{wd1}(2), $\mathsf{Q_V}(\mathcal
K)_\mu=\mathsf{ISD(Q_V }(\mathcal K))$. Hence,
$\mathsf{Q_V}(\mathsf{D}(\mathcal K))=\mathsf{Q_V}(\mathcal K )_\mu$.
\end{proof}

Since the interval $[0,1]$ generates the class $\mathcal{MV}$ of MV-algebras as both a variety and a quasivariety, due to the categorical equivalence of MV-algebras and bounded commutative BCK-algebras, \cite{Mun},
by Theorem \ref{ad:7} and Corollary \ref{AD:co}, we have the following corollary.

\begin{cor}\label{co:ad7}
If $[0,1]_{BCK}=([0,1],*_\mathbb R,0)$ is the bounded commutative BCK-algebra of the real interval $[0,1]$, then $D([0,1]_{BCK})$ generates both as the variety and as the quasivariety of state-morphism BCK-algebras whose BCK-reduct is a bounded commutative BCK-algebra. In other words, $  \mathsf {V}(D([0,1]_{BCK}))= \mathcal V_{BCK} = \mathsf {Q_V}(D([0,1]_{BCK})).$
\end{cor}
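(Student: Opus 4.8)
The plan is to establish the two equalities $\mathsf V(D([0,1]_{BCK}))=\mathcal V_{BCK}$ and $\mathcal V_{BCK}=\mathsf{Q_V}(D([0,1]_{BCK}))$ separately, using the variety result of Corollary~\ref{AD:co} for the first and the quasivariety machinery of Theorem~\ref{ad:7} for the second, and then to glue them via the automatic inclusion $\mathsf{Q_V}(\mathcal K)\subseteq\mathsf V(\mathcal K)$.

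First I would fix notation: let $\mathcal V$ denote the class of bounded commutative BCK-algebras, so that $\mathcal V_\mu=\mathcal V_{BCK}$ is exactly the class of bounded commutative state-morphism BCK-algebras with $\mu(1)=1$. The key preliminary observation is that $[0,1]_{BCK}$ generates $\mathcal V$ both as a variety \emph{and} as a quasivariety. Indeed, by Mundici's categorical equivalence \cite{Mun}, $\mathcal V$ corresponds to the variety $\mathcal{MV}$ of MV-algebras and $[0,1]_{BCK}$ corresponds to the standard MV-algebra $[0,1]$; since $[0,1]$ generates $\mathcal{MV}$ both as a variety and as a quasivariety, and the equivalence carries the relevant closure operators $\mathsf I,\mathsf S,\mathsf P,\mathsf{P_R}$ from one side to the other, the same holds for $[0,1]_{BCK}$ in $\mathcal V$. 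In particular $\mathsf{Q_V}([0,1]_{BCK})=\mathsf V([0,1]_{BCK})=\mathcal V$.

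Next, the variety equality is immediate from Corollary~\ref{AD:co}, which gives $\mathcal V_{BCK}=\mathsf V(D([0,1]_{BCK}))$. For the quasivariety equality I would invoke the bounded-commutative analogue of Theorem~\ref{ad:7}: repeating the proofs of Lemmas~\ref{wd1}--\ref{mainqv} with ``BCK-algebra'' replaced by ``bounded commutative BCK-algebra'' (and checking that the diagonal operator $\mu_X(x,y)=(x,x)$ still fixes the top element, so that $\mu_X(1)=1$ is preserved under all the constructions), one obtains that whenever a system $\mathcal K$ generates a quasivariety $\mathcal W$ of bounded commutative BCK-algebras, $D(\mathcal K)$ generates the quasivariety $\mathcal W_\mu$. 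Applying this to $\mathcal K=\{[0,1]_{BCK}\}$, whose generated quasivariety is $\mathcal V$ by the preliminary observation, yields $\mathsf{Q_V}(D([0,1]_{BCK}))=\mathcal V_\mu=\mathcal V_{BCK}$.

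Finally, the two halves combine into the full chain: since $\mathsf{Q_V}(D([0,1]_{BCK}))\subseteq\mathsf V(D([0,1]_{BCK}))$ holds automatically and both sides have just been identified with $\mathcal V_{BCK}$, we conclude $\mathsf V(D([0,1]_{BCK}))=\mathcal V_{BCK}=\mathsf{Q_V}(D([0,1]_{BCK}))$. The main obstacle I anticipate is the preliminary observation, namely justifying that generation as a \emph{quasivariety} transfers across the categorical equivalence; here one must be careful that reduced products, and not merely products and subalgebras, are respected, which follows because the functor is an equivalence of categories and hence commutes with the algebraic constructions underlying $\mathsf{ISP_R}$. The repetition of the quasivariety lemmas in the bounded commutative setting is routine but should be explicitly flagged, since Lemmas~\ref{wd1}--\ref{mainqv} and Theorem~\ref{ad:7} were stated only for arbitrary BCK-algebras.
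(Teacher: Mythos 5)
Your proposal is correct and takes essentially the same route as the paper: the variety equality via Corollary~\ref{AD:co}, the quasivariety equality via (the bounded commutative analogue of) Theorem~\ref{ad:7}, with Mundici's categorical equivalence supplying the preliminary fact that $[0,1]_{BCK}$ generates the class of bounded commutative BCK-algebras both as a variety and as a quasivariety. If anything, you are more explicit than the paper about the two steps it leaves implicit, namely that quasivariety generation transfers across the equivalence and that Lemmas~\ref{wd1}--\ref{mainqv} must be re-run in the bounded commutative setting with $\mu(1)=1$.
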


Finally, we formulate two open problems.

\vspace{2mm}\noindent
{\bf Problem 1.} {\it Describe some interesting generators of the quasivariety of state BCK-algebras.}
\vspace{2mm}

We note that we do not know yet any interesting generator for the variety of state MV-algebras.

(2) If $X$ is a subdirectly irreducible BCK-algebra, then the diagonal state-morphism BCK-algebra $D(X)$ is subdirectly irreducible. Similarly, if $X$ is linearly ordered and subdirectly irreducible,  then $(X,{\rm Id}_X)$ is subdirectly irreducible. If $X$ is an MV-algebra, the third category of subdirectly irreducible state-morphism MV-alegbra $(X,\mu)$ is the case when $X$ has a unique maximal ideal. Inspired by that, we formulate the second open problem:

\vspace{2mm}\noindent
{\bf Problem 2.} {\it Characterize (bounded) subdirectly irreducible state-morphism BCK-algebras as it was done in \cite{DD1, DDL1, DKM}.}

 %\begin{eqnarray*}
 %\end{eqnarray*}
%%%%%%%%%%%%%%%%%%%%%%%%%%%%%%%%%%%%%%%%%%%%%%%%%%%%%%%%%%%%%%%%%%%%%%%%%%%%%%
%%%%%%%%%%%%%%%%%%%%%%%%%%%%%%%%%%%%%%%%%%%%%%%%%%%%%%%%%%%%%%%%%%%%%%%%%%%%%

\end{document}